\newtheorem{thm}{Theorem}[section]
\newtheorem{lemma}[thm]{Lemma}
\theoremstyle{definition}
\newtheorem{remark}[thm]{Remark}
\def\XXint#1#2#3{{\setbox0=\hbox{$#1{#2#3}{\int}$}
         \vcenter{\hbox{$#2#3$}}\kern-.5\wd0}}
\def\R{\mathbb{R}}
\def\e{\varepsilon}
\def\tY{\widetilde{Y}}
\numberwithin{equation}{section}
\begin{document}

\title{ Uniform Estimates for Dirichlet Problems \\ in Perforated Domains}

\author{
Zhongwei Shen\thanks{Supported in part by NSF grants DMS-1856235, DMS-2153585,  and by Simons Fellowship.}    
}
\date{}
\maketitle

\begin{abstract}

 This paper studies  the Dirichlet problem for Laplace's equation in a domain $\Omega_{\varepsilon, \eta}$ perforated with small holes,
 where $\varepsilon$ represents the scale of the minimal distances between holes and $\eta$ the ratio between the scale of  sizes of
 holes and $\varepsilon$.
 We establish  $W^{1, p}$ estimates for solutions with bounding constants depending explicitly on the small parameters $\varepsilon$ and $\eta$.
  We also show that these estimates are either optimal or near optimal.
  
\medskip

\noindent{\it Keywords}: Perforated Domain; Laplace's Equation; $W^{1, p}$ Estimate; Homogenization.

\medskip

\noindent {\it MSC2020}: 35J05; 35J25;  35B27.

\end{abstract}


\section{Introduction}\label{section-1}

In this paper we consider the Dirichlet problem for Laplace's equation,
\begin{equation}\label{D-01}
\left\{
\aligned
-\Delta u & =F+ \text{\rm div} (f) & \quad &  \text{ in } \Omega_{\e, \eta} ,\\
u & = 0 & \quad & \text{ on } \partial \Omega_{\e, \eta},
\endaligned
\right.
\end{equation}
in a domain $\Omega_{\e, \eta}$ perforated with small holes in $\R^d$,
where $\e\in (0, 1]$ represents the scale of the minimal distances  between holes and $\eta \in (0, 1]$ the ratio  between  the scale of  sizes of holes and $\e$.
Let $F\in L^2(\Omega_{\e, \eta})$ and $f\in L^2(\Omega_{\e, \eta}; \R^d)$.
Under some general conditions on $\Omega_{\e, \eta}$, the Dirichlet problem \eqref{D-01} possesses a unique solution $u$
in $ W^{1, 2}_0(\Omega_{\e, \eta} )$.
Moreover, the solution satisfies the energy  estimate,
\begin{equation}\label{2-estimate}
\e ^{-1} \eta^{\frac{d-2}{2}}
 \| u\|_{L^2(\Omega_{\e, \eta})}
+ \| \nabla u \|_{L^2(\Omega_{\e, \eta})}
\le C \left\{  \| f\|_{L^2(\Omega_{\e, \eta})} + \e \eta^{\frac{2-d}{2}}  \| F \|_{L^2(\Omega_{\e, \eta})}\right\},
\end{equation}
for $d\ge 3$, where $C$ is independent of $\e$ and $\eta$.
The purpose of this paper is to investigate  the analogous estimates in the $L^p$ setting for $1< p< \infty$.
More precisely, we study the $W^{1, p}$ estimates,
\begin{equation}\label{W-1-p}
\left\{
\aligned
\| \nabla u \|_{L^p(\Omega_{\e, \eta} )}  & \le A_p(\e, \eta) \| f\|_{L^p(\Omega_{\e, \eta} )}
+B_p (\e, \eta) \|F\|_{L^p(\Omega_{\e, \eta})},\\
\|  u \|_{L^p(\Omega_{\e, \eta} )}  & \le C_p(\e, \eta) \| f\|_{L^p(\Omega_{\e, \eta} )}
+C_p (\e, \eta) \|F\|_{L^p(\Omega_{\e, \eta})},
\endaligned
\right.
\end{equation}
and are interested in the explicit dependance of  bounding constants
 $A_p(\e, \eta)$, $B_p(\e, \eta)$, $C_p(\e, \eta)$ and $D_p(\e, \eta)$  on the small parameters $\e$ and $\eta$.
 Note that by duality, $B_p(\e, \eta)=C_{p^\prime}(\e, \eta)$, where $p^\prime=\frac{p}{p-1}$.
 
Our study is motivated by the homogenization theory of boundary value problems for elliptic equations and systems 
in perforated domains, which are used to model
various processes in porous media and perforated materials.
The asymptotic behavior of the solutions $u=u_\e$, as $\e\to 0$,  has been studied extensively since 1970's.
In particular, if $\Omega_{\e, \eta}$ is a bounded Lipschitz domain perforated periodically  with $\eta=1$, given by \eqref{Omega},
and $u_\e$ is the solution of the Dirichlet problem:  
\begin{equation}\label{e-DP}
-\Delta u_\e =F \quad \text{ in } \Omega_{\e, \eta} \quad \text{  and } \quad
u_\e =0 \quad  \text{ on } \partial \Omega_{\e, \eta},
\end{equation}
then $u_\e/\e^2 \to \gamma_0 F$ weakly in $L^2(\Omega)$ for some constant $\gamma_0>0$.
If $\eta=\eta (\e)  \to 0$ as $\e \to 0$,
the asymptotic behavior of $u_\e$ is divided into three   cases.
In the case of large holes,  where $\sigma_\e=\e \eta^{\frac{2-d}{2}} \to 0$,
$u_\e/\sigma_\e^2 $ converges strongly in $L^2(\Omega)$ to $\gamma_1  F$ for some constant $\gamma_1>0$.
In the case of small holes,  where $\sigma_\e \to \infty$,
$u_\e$ converges strongly in $W^{1, 2}_0(\Omega)$ to $u_0$, where $u_0$ is the solution of the Dirichlet problem:
\begin{equation}\label{00-DP}
-\Delta u_0=F \quad \text{ in } \Omega \quad \text{  and } \quad u_0=0 \text{  on } \partial \Omega.
\end{equation}
In the critical case, where $\sigma_\e \to 1$,
the solution $u_\e$ converges weakly in $W^{1, 2}_0 (\Omega)$ to $u_0$,
where $u_0$ is the solution of the Dirichlet problem:
\begin{equation}\label{01-DP}
-\Delta u_0+\mu_* u_0=F \quad \text{ in } \Omega \quad \text{  and } \quad u_0=0 \text{  on } \partial \Omega, 
\end{equation}
and $\mu_*$ is a positive constant \cite{Marchenko-74, Lions-80, Murat-82a, Murat-82b, Murat-89, Hoang-2000}.
Similar results hold for the Stokes equations,
$-\Delta u_\e +\nabla p_\e=F$ and $\text{\rm div}(u_\e)=0$ in $\Omega_{\e, \eta}$, 
 with Dirichlet  condition $u_\e=0$ on $\partial \Omega_{\e, \eta}$ \cite{Allaire-90}.
We refer the reader to \cite{Allaire-90, JKO,  Allaire-1997, Jing-2020, Lu-2020, Blanc-22} and references therein for an introduction to the 
homogenization theory  of
Dirichlet  problems in perforated domains.

Our main   interest in this paper is in  uniform regularity  estimates for the Dirichlet problem
\eqref{D-01}. In the case $\eta=1$,
the $W^{1, p}$ estimates were established by N. Masmoudi \cite{ Masmoudi-2004} for Laplace's equation in an unbounded
 perforated domain given by \eqref{omega}.
Using a compactness method,
the results were extended recently in \cite{Shen-D} by the present author to the Stokes equations
 with $\eta=1$, whose asymptotic 
behavior is govern by the so-called Darcy's law.
This paper treats Laplace's equation in the vanishing volume case where $\eta \to 0$.
We are  able to establish $W^{1, p}$ estimates with optimal or  near optimal bounding constants  in a  general  non-periodic setting.
To the best of our knowledge,  this paper contains the first results on regularity estimates in $L^p$,  
which are uniform in both $\e\in  (0, 1]$ and $\eta\in (0, 1]$,
for Dirichlet problems
in perforated domains.
Theorems \ref{main-thm-2} and \ref{main-thm-3} will be useful in the study of optimal convergence rates for solutions  $u_\e$ of \eqref{e-DP}
 in $L^p$ and $W^{1, p}$ spaces
for $1<p< \infty$,
a topic we plan to return in a future study.

To state our main results,  let $Y=[-1/2,1/2]^d $ be a closed unit cube in $\R^d$ and
$\{T_k: k\in \mathbb{Z}^d \}$  a family of closed  subsets of $Y$.
Throughout the paper we shall assume that each $T_k$ is the closure of a bounded Lipschitz domain, 
$Y\setminus  {T_k}$ is connected, and that
\begin{equation}\label{condition-0}
B(0, c_0) \subset T_k,  \qquad \text{\rm dist} (\partial T_k, \partial Y)\ge c_0>0
\end{equation}
for some $c_0>0$.
Define
\begin{equation}\label{omega}
\omega_{\e, \eta} = \R^d \setminus \bigcup_{k\in \mathbb{Z}^d}  \e ( k + \eta {T_k}),
\end{equation}
where $0<\e,  \eta\le 1$.
Roughly speaking, the unbounded perforated domain $\omega_{\e, \eta}$ is obtained from 
$\R^d$ by removing  a hole $\e (k+\eta {T_k})$ of size  $\e \eta$ from each cube $\e (k+Y)$ of size $\e$.
The distances between holes  are bounded below by $c_0\e$.

\begin{thm}\label{main-thm-2}
Let $d\ge 3$ and $1< p< \infty$.
Let $\omega_{\e, \eta}$ be given by \eqref{omega}, where
 $\{T_k\}$ are the  closures of bounded domains with  uniform $C^1$ boundaries. 
Then, for any  $F \in L^p(\omega_{\e, \eta})$ and $f\in L^p(\omega_{\e, \eta}; \R^d)$, the Dirichlet problem,
\begin{equation}\label{D-omega}
-\Delta u = F+ \text{\rm div} (f) \quad \text{ in } \omega_{\e, \eta}
\quad \text{ and } \quad u=0 \quad \text{ on } \partial \omega_{\e, \eta},
\end{equation}
  has a unique solution in 
$W_0^{1, p}(\omega_{\e, \eta})$. Moreover, the solution satisfies the estimates,
\begin{equation}\label{m-e-1}
\| \nabla u \|_{L^p(\omega_{\e, \eta} )} \le
\left\{
\aligned
& C \, \e \eta^{1-\frac{d}{2}} \| F \|_{L^p(\omega_{\e, \eta})}
+ C_{ \delta}\,  \eta^{-d |\frac12 -\frac{1}{p}| -\delta} \| f \|_{L^p(\omega_{\e, \eta})}
& \quad & \text{ for } 1< p\le 2,\\
& C_{ \delta}\,  \e \eta^{1-d+\frac{d}{p}-\delta} \| F \|_{L^p(\omega_{\e, \eta})} 
+  C_{ \delta}\,  \eta^{-d |\frac12 -\frac{1}{p}| -\delta} \| f \|_{L^p(\omega_{\e, \eta})} & \quad & \text{ for } 2<p< \infty,
\endaligned
\right.
\end{equation}
and
\begin{equation}\label{m-e-2}
\| u\|_{L^p(\omega_{\e, \eta})}
\le \left\{
\aligned
& C\,  \e^2 \eta^{2-d} \| F \|_{L^p(\omega_{\e, \eta})}
+ C_{\delta} \, \e \eta^{1-\frac{d}{p}-\delta} \| f\|_{L^p(\omega_{\e, \eta})}
& \quad & \text{ for } 1< p< 2,\\
& C\,  \e^2 \eta^{2-d} \| F \|_{L^p(\omega_{\e, \eta})}
+ C\,  \e \eta^{1-\frac{d}{2} } \| f\|_{L^p(\omega_{\e, \eta})} & \quad & \text{ for } 2 \le  p< \infty,
\endaligned
\right.
\end{equation}
for any $\delta\in (0, 1)$,
where $C$ depends on $d$, $p$ and $\{T_k \}$, and $C_{ \delta}$ also depends on $\delta$.
\end{thm}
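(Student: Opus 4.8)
The plan is to transfer everything to a fixed reference geometry by rescaling and then to run a large-scale/small-scale decomposition combined with interpolation between the $L^2$ energy estimate \eqref{2-estimate} and the elementary $L^\infty$ or $W^{1,\infty}$-type bounds near a single hole. Concretely, after the scaling $x\mapsto \e x$, the domain $\omega_{\e,\eta}$ becomes $\omega_{1,\eta}=\R^d\setminus\bigcup_k(k+\eta T_k)$, whose holes have fixed unit spacing but size $\eta$; the $L^p$ norms pick up powers of $\e$, so it suffices to prove the $\eta$-dependence of the constants for $\omega_{1,\eta}$ and then restore $\e$ by scaling. For $\omega_{1,\eta}$ the key object is the "cell" problem on $Y\setminus\eta T_k$ with Dirichlet data on the hole; when $\eta\to 0$ this domain is a punctured cube and one has precise control on the Poincaré constant and on the Green's function via comparison with the capacitary potential of $\eta T_k$ (whose capacity is $\sim \eta^{d-2}$). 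I would first record, as a lemma, the scaled energy estimate: on $\omega_{1,\eta}$, $\|\nabla u\|_{L^2}+\eta^{(d-2)/2}\|u\|_{L^2}\le C(\|f\|_{L^2}+\eta^{(2-d)/2}\|F\|_{L^2})$, which is just \eqref{2-estimate} with $\e=1$.

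The heart of the argument is a real-variable $L^p$ estimate obtained by a perturbation/good-$\lambda$ or Calderón–Zygmund argument adapted to the perforated geometry. The standard route here (following Masmoudi's scheme for $\eta=1$, and the compactness-free approach) is: (i) extend $u$ by zero across the holes and compare, on each dilated cell $2(k+Y)$, the solution $u$ with the solution $v$ of the constant-coefficient problem in the whole cell (no hole), estimating the difference in terms of the "correctors" that encode the holes; (ii) the correctors are built from the capacitary potentials $w_k$ of $\eta T_k$ in $Y$, which satisfy $\|\nabla w_k\|_{L^2(Y)}^2\sim \eta^{d-2}$ and, more importantly, $\|\nabla w_k\|_{L^q(Y)}\sim \eta^{d/q'-1}$ for $q$ in an appropriate range (this is the exponent bookkeeping that produces $\eta^{-d|1/2-1/p|-\delta}$); (iii) interpolate the $L^2$ bound with the trivial $L^\infty$-side bound, losing an arbitrarily small power $\eta^{-\delta}$ exactly because the $L^\infty$ endpoint is not available for the Laplacian (hence the $C_\delta$ and the $\delta$ in the exponents). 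The two cases $1<p\le 2$ and $p>2$ come from duality: one proves the gradient estimate for $1<p\le 2$ directly and obtains $2<p<\infty$ by duality against the $L^{p'}$ statement for $u$, which accounts for the asymmetric appearance of the $F$-coefficient ($\e\eta^{1-d/2}$ for $p\le 2$ versus $\e\eta^{1-d+d/p-\delta}$ for $p>2$, matching $B_p=C_{p'}$).

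For the $L^p$ estimate on $u$ itself in \eqref{m-e-2}, I would separate the contributions of $F$ and $f$. The $F$-term: writing $u=\mathcal{G}_{\e,\eta}(F)$, the bound $\e^2\eta^{2-d}$ is dictated by the scaling of the Green operator (the $\e^2\eta^{2-d}=\sigma_\e^2$ heuristic from the homogenization discussion) and is proved by testing the equation with $u$ itself and using the scaled Poincaré inequality $\|u\|_{L^2(\omega_{1,\eta})}\le C\eta^{(2-d)/2}\|\nabla u\|_{L^2}$, then upgrading $L^2\to L^p$ by the same CZ machinery; since this estimate has no loss, no $\delta$ appears. The $f$-term: for $2\le p<\infty$ one again gets the clean $\e\eta^{1-d/2}$ from Poincaré plus the gradient estimate, while for $1<p<2$ one loses $\eta^{-\delta}$ through interpolation, giving $\e\eta^{1-d/p-\delta}$. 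Uniqueness in $W^{1,p}_0$ follows from the a priori estimate plus a density argument (the $L^2$ solution is the unique one, and the $L^p$ estimate shows any $W^{1,p}_0$ solution with $L^p$ data coincides with it).

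The main obstacle I anticipate is the sharp exponent accounting in step (ii)–(iii): getting the precise power $-d|1/2-1/p|$ (rather than something weaker) requires careful $L^q$ estimates on the capacitary correctors and their gradients over the cell $Y\setminus\eta T_k$, uniform in $\eta$, together with a clean interpolation that isolates the endpoint loss into $\eta^{-\delta}$ only. This is where the $C^1$ regularity of $\partial T_k$ is essential — it gives the scale-invariant $W^{1,\infty}$ estimates near the holes that feed the interpolation — and it is also where one must be careful that the perturbation argument does not accumulate errors over the infinitely many cells (handled by the translation-invariance of the setup and a fixed-scale covering, so that the global $L^p$ norm is controlled cell-by-cell with a uniform constant).
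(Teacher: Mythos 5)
Your outline diverges from the paper's actual method (which is Moser-type iteration plus local Jerison--Kenig estimates plus Riesz--Thorin and duality, not a Calder\'on--Zygmund/good-$\lambda$ comparison with capacitary correctors), and as written it has a genuine gap at its center. The linchpin of the theorem is the sharp, loss-free bound $\| u\|_{L^p(\omega_{\e,\eta})}\le C\{\e^2\eta^{2-d}\|F\|_{L^p}+\e\,\eta^{\frac{2-d}{2}}\|f\|_{L^p}\}$ for \emph{all} $2\le p<\infty$; this single estimate is responsible for the $\delta$-free coefficients in \eqref{m-e-2}, for the $F$-coefficient in \eqref{m-e-1} when $1<p\le2$ (via $B_p=C_{p'}$), and it also supplies the large-$p$ endpoint from which the gradient bounds are interpolated. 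You assert it by ``testing with $u$ and upgrading $L^2\to L^p$ by the same CZ machinery; since this estimate has no loss, no $\delta$ appears,'' but no mechanism is given, and any upgrade that loses an arbitrarily small power of $\eta$ (as your own interpolation scheme does) would contradict the clean constants in \eqref{m-e-2}. The paper obtains it by a Moser-type test-function argument: test the equation with $v_\ell=\min\{|u|^{p-2},\ell^{p-2}\}u$, pass to $w_\ell=\min\{|u|^{\frac{p}{2}-1},\ell^{\frac{p}{2}-1}\}u$, and apply the cell-wise Poincar\'e inequality $\|w_\ell\|_{L^2}\le C\e\,\eta^{\frac{2-d}{2}}\|\nabla w_\ell\|_{L^2}$; this needs no smoothness of the holes. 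Without this (or an equivalent) ingredient your scheme does not close.

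A second concrete problem is the interpolation endpoint for $\nabla u$: there is no ``trivial $L^\infty$-side bound,'' and your sentence even concedes that the $L^\infty$ endpoint is unavailable. The large-$p$ bound must be manufactured; the paper does it with local $W^{1,p}$ estimates (Jerison--Kenig in the uniformly $C^1$ perforated cells), summed over cells, giving $\|\nabla u\|_{L^p(\omega_{1,\eta})}\le C\{\eta^{-1}\|u\|_{L^p}+\|F\|_{L^p}+\|f\|_{L^p}\}$, which combined with the sharp $\|u\|_{L^p}$ bound yields the $p_0$-independent constant $\eta^{-d/2}$ (for the $f$-term) at every large $p_0$, and then Riesz--Thorin between $p=2$ and $p_0\to\infty$ produces $\eta^{-d|\frac12-\frac1p|-\delta}$. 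Your replacement for this step --- the corrector/capacity bookkeeping --- is exactly the part you flag as the main obstacle and do not carry out, and the scaling you quote, $\|\nabla w_k\|_{L^q(Y)}\sim\eta^{d/q'-1}$, is incorrect (the capacitary potential gives $\eta^{d/q-1}$, consistent with Lemma \ref{lemma-chi01}), so the exponent accounting that the theorem hinges on is missing. Finally, uniqueness in $W^{1,p}_0(\omega_{\e,\eta})$ does not follow from the a priori estimate plus density on this unbounded domain: one must show a $W^{1,p}$ solution of the homogeneous problem vanishes, which the paper does via interior $L^\infty$ bounds on cells, decay at infinity, and the maximum principle.
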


We point out that the powers of $\e$ in \eqref{m-e-1} and \eqref{m-e-2} are dictated by scaling.
In fact,  by rescaling,  it suffices to prove  the estimates for $\e=1$. The powers of $\eta$ in \eqref{m-e-1} and \eqref{m-e-2} are  either  optimal or
near optimal in the sense that
the estimates fail for any $\delta<0$.  
Indeed, by using a $Y$-periodic function $\chi_\eta$, which satisfies the equation 
\begin{equation}
-\Delta \chi_\eta =\eta^{d-2} \quad \text{ in } \omega_{1, \eta} \quad \text{ and } \quad
\chi_\eta =0 \quad \text{ on } \partial \omega_{1, \eta},
\end{equation}
we show that 
 lower bounds for $A_p (\e, \eta)$, $B_p (\e,\eta)$, $C_p(\e, \eta)$ and $D_p(\e, \eta)$ in a periodically 
 perforated domain $\omega_{\e, \eta}$, 
 are given by the corresponding bounding constants in \eqref{m-e-1} and \eqref{m-e-2} with $\delta=0$.
 More precisely, if the estimates in \eqref{W-1-p} hold for some $1< p< \infty$ with the best possible constants $A_p(\e, \eta)$, $B_p (\e, \eta)$,
 $C_p(\e, \eta)$ and $D_p(\e, \eta)$, then
 \begin{equation}
 A_p (\e, \eta) \ge c \, \eta^{-d |\frac12- \frac{1}{p}| },
 \end{equation}
 \begin{equation}
 D_p (\e, \eta)  \ge c\,  \e^2 \eta^{2-d},
 \end{equation}
and
 \begin{equation}
 B_p (\e, \eta) =C_{p^\prime} (\e, \eta) \ge \left\{
 \aligned
 &   c\, \e \eta^{1-\frac{d}{2}} & \quad & \text{ for } 1< p\le 2,\\
 & c\,  \e \eta^{1-d+\frac{d}{p}} & \quad & \text{ for } 2< p < \infty,
 \endaligned
 \right.
 \end{equation}
 where $c>0$ depends only $d$, $p$ and $T_k=T$.
  See Sections \ref{section-U} and \ref{section-GU} for details 
 as well as for $W^{1, p}$ estimates in the case $d=2$.
 In a forthcoming paper \cite{Shen-W}, 
 J. Wallace and the present author are able to establish the optimal estimates \eqref{m-e-1}-\eqref{m-e-2} with $\delta=0$ for $d\ge 3$
 in a periodically perforated domain. 
 We also obtain the optimal $W^{1, p}$ estimates in the case $d=2$.
The proof relies on a large-scale Lipschitz estimate in the periodic setting.
It would  be very interesting to extend the results in this paper  and \cite{Shen-W} to the Stokes equations.

Let $\Omega$ be a bounded domain in $\R^d$ and
\begin{equation}\label{Omega}
\Omega_{\e, \eta}
=\Omega \setminus 
\bigcup_k \e ( k +\eta T_k),
\end{equation}
where the union is taken over those $k$'s in $\mathbb{Z}^d$  for which $\e (k +Y) \subset \Omega$.

\begin{thm}\label{main-thm-3}
Let $d\ge 3$ and  $1< p< \infty$.
Let $\Omega_{\e, \eta}$ be given by \eqref{Omega}, where
$\Omega$ is a bounded $C^1$ domain and 
$\{ T_k: k \in \mathbb{Z}^d \}$  satisfies the same conditions as in Theorem \ref{main-thm-2}.
Then, for any $f\in L^p(\Omega_{\e, \eta}; \R^d)$ and $F\in L^p(\Omega_{\e, \eta})$, the Dirichlet problem \eqref{D-01}
 has a unique solution in 
$W_0^{1, p}(\Omega_{\e, \eta})$. Moreover, the solution satisfies the estimates in  \eqref{W-1-p}
with 
\begin{equation}\label{Ape}
A_p (\e, \eta) \le C_\delta \big\{ \min ( (\e\eta)^{-2}, \eta^{-d})  \big\}^{|\frac{1}{2}-\frac{1}{p}|+\delta},
\end{equation}
\begin{equation} \label{Bpe}
B_p(\e, \eta) \le
\left\{
\aligned
& C \e\eta  \big\{ \min ((\e\eta)^{-2} ,  \eta^{-d} ) \big\}^{\frac12}  & \quad & \text{ if } 1< p\le 2 ,\\
&C_\delta\, \e \eta  \big\{ \min ((\e\eta)^{-2}, \eta^{-d}) \big\}^{1-\frac{1}{p} +\delta } & \quad & \text{ if } 2< p< \infty,
\endaligned
\right.
\end{equation}
\begin{equation} \label{Cpe}
C_p(\e, \eta) \le
\left\{
\aligned
& C \e\eta  \big\{ \min ((\e\eta)^{-2} ,  \eta^{-d} ) \big\}^{\frac12}  & \quad & \text{ if } 2\le  p< \infty  ,\\
&C_\delta\, \e \eta  \big\{ \min ((\e\eta)^{-2}, \eta^{-d}) \big\}^{\frac{1}{p} +\delta } & \quad & \text{ if } 1< p< 2,
\endaligned
\right.
\end{equation}
and 
\begin{equation}\label{Dpe}
D_p (\e, \eta)
\le C (\e \eta)^2 \min ( (\e\eta)^{-2}, \eta^{-d}),
\end{equation}
for any $\delta\in (0, 1)$, where $C$ depends on $d$, $p$, $\{T_k\}$ and $\Omega$, and $C_\delta$ also depends on $\delta$.
\end{thm}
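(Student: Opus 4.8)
The plan is to prove Theorem~\ref{main-thm-3} by establishing, for each of the four constants, two separate upper bounds — a \emph{fine-scale} one carrying powers of $\eta^{-d}$ and a \emph{macroscopic} one carrying powers of $(\e\eta)^{-2}$ — and taking the smaller of the two; this is what produces the $\min((\e\eta)^{-2},\eta^{-d})$ appearing in \eqref{Ape}--\eqref{Dpe}. For the existence and uniqueness of the $W^{1,p}_0$-solution it suffices to establish the a priori estimates \eqref{W-1-p}, from which existence follows via the $L^2$-theory \eqref{2-estimate}, duality (using the a priori estimate for $p'$), and approximation. Since the solution operators $F\mapsto u$ and $f\mapsto\nabla u$ (the latter for the problem with $F=0$) are self-adjoint on $L^2(\Omega_{\e,\eta})$ and $B_p=C_{p'}$ as noted, it is enough to prove \eqref{Ape} and \eqref{Dpe} for $p\ge2$, the $p\ge2$ case of \eqref{Cpe}, and the $1<p<2$ case of \eqref{Cpe} (which gives \eqref{Bpe} for $p>2$).

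The $L^p\to L^p$ bound \eqref{Dpe} and the parts of \eqref{Bpe}--\eqref{Cpe} without $\delta$ I would obtain directly, with no loss. Testing the equation with $u$ gives $\|\nabla u\|_{L^2}\le C\|f\|_{L^2}+C(\Omega)\|F\|_{L^2}$, and combined with the perforated Poincar\'e inequality contained in \eqref{2-estimate} this yields $C_2\le C\min(1,\e\eta^{\frac{2-d}{2}})$ and $D_2\le C\min(1,\e^2\eta^{2-d})$. For the $L^\infty$ endpoints, comparison with the solution of the non-perforated — or the homogenized, with effective potential $\mu_\e\sim(\e\eta^{\frac{2-d}{2}})^{-2}$ — Dirichlet problem in $\Omega$, together with maximum-principle barriers built from the periodic capacitary potential, gives $C_\infty,D_\infty$ the same bounds; interpolating the $L^2$ and $L^\infty$ estimates then yields \eqref{Dpe} and the $p\ge2$ part of \eqref{Cpe}.

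For the \emph{fine-scale} bound I would use a covering argument. Away from $\partial\Omega$, $\Omega_{\e,\eta}$ coincides with $\omega_{\e,\eta}$, so a cut-off version of Theorem~\ref{main-thm-2} controls $\|\nabla u\|_{L^p}$ on interior balls by the right-hand sides of \eqref{m-e-1}--\eqref{m-e-2} plus lower-order terms; near $\partial\Omega$, where the holes stay at distance $\gtrsim\e$ from the boundary, one flattens the $C^1$ boundary and invokes a half-space analogue of Theorem~\ref{main-thm-2}, proved by the same method. Summing over a finite cover of $\Omega$ and absorbing the $\|u\|_{L^p}$-terms by \eqref{Cpe}--\eqref{Dpe} gives $A_p\le C_\delta\,\eta^{-d|\frac12-\frac1p|-\delta}$ and the corresponding $\eta^{-d}$-bounds for $B_p$, $C_p$, $D_p$.

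The \emph{macroscopic} bound is the new ingredient. Let $u_0$ solve $-\Delta u_0=F+\mathrm{div}(f)$ in $\Omega$ with $u_0=0$ on $\partial\Omega$, so that $\|\nabla u_0\|_{L^p(\Omega)}\le C(\Omega,p)\big(\|f\|_{L^p}+\|F\|_{L^p}\big)$ by the classical $C^1$-domain estimate, and set $v=u-u_0$; then $v$ is harmonic in $\Omega_{\e,\eta}$, vanishes on $\partial\Omega$, and equals $-u_0$ on the holes. The energy estimate gives $\|\nabla v\|_{L^2}\le C\|f\|_{L^2}+C\e\eta^{\frac{2-d}{2}}\|F\|_{L^2}$, while a cell-by-cell comparison of $v$ with $-u_0(x_k)(1-\psi_k)$ — $\psi_k$ the capacitary corrector of the $k$-th hole, with $|\nabla\psi_k|\lesssim(\e\eta)^{d-2}|x-x_k|^{1-d}$ — together with the Sobolev bound for $u_0$ yields an $L^q$ endpoint estimate of the form $\|\nabla v\|_{L^q(\Omega_{\e,\eta})}\le C_q\,(\e\eta)^{-1}\eta^{d/q}\big(\|f\|_{L^q}+\dots\big)$ for $q$ large. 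Interpolating these two bounds gives $\|\nabla v\|_{L^p}\le C_\delta\,(\e\eta)^{-2|\frac12-\frac1p|-\delta}\|f\|_{L^p}+\dots$, hence the $(\e\eta)^{-2}$-versions of \eqref{Ape}--\eqref{Cpe}; taking the minimum with the fine-scale bounds completes the proof. The main obstacle is precisely this macroscopic step: making the near-hole capacitary analysis uniform over all holes and over the whole range $\e,\eta\in(0,1]$ — in particular in the boundary layer, where cells are clipped by a merely $C^1$ surface — and checking that the interpolation between the $L^2$ energy bound and the $L^q$ endpoint bound reproduces exactly the exponent $|\tfrac12-\tfrac1p|+\delta$ on $\min((\e\eta)^{-2},\eta^{-d})$; a secondary nuisance is the bookkeeping that couples the four estimates through the lower-order terms, which forces one to establish the $L^p\to L^p$ bounds first.
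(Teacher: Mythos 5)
Your overall architecture (two competing bounds, one in $\eta^{-d}$ and one in $(\e\eta)^{-2}$, then take the minimum, plus duality) is sound in outline, but two of its load-bearing steps are not proofs as written. The most concrete gap is your treatment of the $p\ge 2$ case of \eqref{Cpe}: you propose to prove an $L^\infty$ endpoint $C_\infty\le C\min(1,\e\eta^{\frac{2-d}{2}})$ by ``maximum-principle barriers built from the periodic capacitary potential'' and comparison with a homogenized problem, and then interpolate with the $L^2$ bound. The maximum principle gives nothing for divergence-form data: for $-\Delta u=\mathrm{div}(f)$ with merely $f\in L^\infty$ there is no signed supersolution built from $|f|$, and the ``effective potential $\mu_\e$'' comparison is a heuristic, not an estimate. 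Moreover the $p=\infty$ endpoint of \eqref{Cpe} (equivalently $p=1$ of \eqref{Bpe}) is exactly the borderline case the paper never claims; even your $D_\infty$ barrier needs a uniform $L^\infty$ bound for $\chi_\eta$ and an adaptation to non-periodic $T_k$ that you do not supply. The paper avoids all of this by proving \eqref{Cpe} and \eqref{Dpe} for each finite $p\ge2$ directly, with no interpolation, via the Moser-type test functions $v_\ell=\min\{|u|^{p-2},\ell^{p-2}\}u$ combined with the two Poincar\'e inequalities \eqref{u-P} and \eqref{3.1-O} (Theorem \ref{thm-3.1}); since your \eqref{Bpe} for $1<p\le2$ is derived from \eqref{Cpe} by duality, that part is unsupported as well.

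The gradient bounds have a similar problem. Your ``fine-scale'' estimate invokes an unproved half-space analogue of Theorem \ref{main-thm-2} near $\partial\Omega$, and your ``macroscopic'' estimate rests on a cell-by-cell capacitary-corrector construction producing an $L^q$ endpoint of the form $\|\nabla v\|_{L^q}\lesssim(\e\eta)^{-1}\eta^{d/q}$ --- which you yourself flag as the main obstacle; it requires pointwise values $u_0(x_k)$ from $L^p$ data and uniformity in the cells clipped by the $C^1$ boundary, and is of the same order of difficulty as the theorem itself. The paper shows both pieces of machinery are unnecessary: the single localization estimate \eqref{L-O}, valid at scale $\e\eta$ up to the $C^1$ boundary of $\Omega$ and near each hole, converts the $L^p$ bound of Theorem \ref{thm-3.1} (whose $\min$ already contains both regimes, the $(\e\eta)^{-2}$ alternative coming for free from the ordinary Poincar\'e inequality \eqref{u-P} on $\Omega$ rather than from any corrector analysis) into $\|\nabla u\|_{L^p}\le C(\e\eta)^{-1}\min(1,\e\eta^{\frac{2-d}{2}})\{\|F\|_{L^p}+\|f\|_{L^p}\}$ for large $p$, and one Riesz--Thorin interpolation against the $L^2$ energy estimate yields \eqref{Ape} and the $2<p<\infty$ part of \eqref{Bpe}, with duality handling the remaining cases. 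Finally, existence and uniqueness in $W^{1,p}_0(\Omega_{\e,\eta})$ need no a priori-estimate bootstrap: for fixed $\e,\eta$ the perforated domain is a bounded $C^1$ domain, so they follow at once from Theorem \ref{JK-thm}.
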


Theorem \ref{main-thm-3} shows that the estimates in \eqref{m-e-1} and \eqref{m-e-2} continue to hold with $\Omega_{\e, \eta}$ in the place of $\omega_{\e, \eta}$, 
and that
 if $\sigma_\e = \e \eta^{1-\frac{d}{2}}\gg 1$, better estimates hold in  the bounded perforated domain.
 We mention that analogous estimates to those in Theorem \ref{main-thm-3}  are also obtained for the case $d=2$. 
 See Section \ref{section-B} for details.

We now describe our approach to Theorems \ref{main-thm-2} and  \ref{main-thm-3}.
The first step is to establish the estimate \eqref{m-e-2}  in the case $2\le p< \infty$,
by using the test functions,
$$
v_\ell =\min \{ |u|^{p-2}, \ell^{ p-2} \},
$$
where $\ell \ge 1$,
in the weak formulation of the elliptic equation $-\Delta u =F +\text{\rm div}(f)$ in $\omega_{\e, \eta}$.
This is a classical method  that goes back to  J. Moser \cite{Moser-60} and was used to establish $L^\infty$ estimates for
weak solutions of elliptic equations in divergence form with bounded measurable coefficients.
The approach was also used in \cite{Masmoudi-2004} in the case $\eta=1$.
By using a  Poincar\'e inequality for functions in $W^{1, 2} (Y)$ that vanish on $B(0, \eta)$,
we are able to deduce the estimate \eqref{m-e-2} for $\| u \|_{L^p(\omega_{\e, \eta})}$  in the case  $2\le p< \infty$.
This estimate is sharp and requires no smoothness condition on $ \{T_k: k \in \mathbb{Z}^d\}$.
Next, we use a localization argument to show that
$$
\| \nabla u \|_{L^p(\omega_{\e, \eta})}
\le C \left\{ (\e \eta)^{-1} \| u\|_{L^p(\omega_{\e, \eta})}  + \| F \|_{L^p(\omega_{\e, \eta})} + \| f\|_{L^p(\omega_{\e, \eta})} \right\},
$$
for $2< p< \infty$, under the assumption that the boundaries of holes $\{T_k\}$ are uniformly $C^1$.
As a result, we obtain an estimate of $\|\nabla u\|_{L^{p_0} (\omega_{\e, \eta})} $ for  any large $p_0>2$.
Finally, we apply the Riesz-Thorin Interpolation Theorem for $2<p< p_0$, utilizing the $L^2$ energy  estimate in \eqref{2-estimate} and
the estimate of $\nabla u$ in $L^{p_0}$.
For any $\delta\in (0, 1)$, we obtain the estimate \eqref{m-e-1} for $2<p<\infty$ by choosing $p_0$ sufficiently large.
The case $1<p<2$ follows by a duality argument.

We remark that the approach outlined above works equally well for the bounded perforated domain $\Omega_{\e, \eta}$.
The additional application  of the Poincar\'e inequality on the bounded domain $\Omega$ contributes to  the appearance of the factor 
$\min ((\e \eta)^{-2}, \eta^{-d})$ in Theorem \ref{main-thm-3}, which takes value  $(\e \eta)^{-2}$ and yields better estimates
 in the case $ \sigma_\e \gg 1$ (the case of small holes).


\section{Local estimates}\label{section-L}

Recall that $Y=[-1/2, 1/2]^d$.
We begin with a Poincar\'e inequality.

\begin{lemma}\label{p-lemma}
Let $d\ge 2$ and $1< p< \infty$.
Suppose that  $u\in W^{1, p}(Y)$ and $u=0$ on $B(0, \eta)$ for some $0< \eta< 1/4$. Then
\begin{equation}\label{P}
\int_Y |u|^p\, dx
\le C \int_Y |\nabla u|^p\, dx
\cdot 
\left\{
\aligned
& \eta^{p-d} & \quad & \text{ if \ \ } 1< p< d,\\
& | \ln \eta|^{p-1} & \quad & \text{ if \ \ } p=d,\\
& 1 & \quad & \text{ if \ \ } d<p< \infty,
\endaligned
\right.
\end{equation}
where $C$ depends on $d$ and $p$.
\end{lemma}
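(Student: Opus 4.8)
The plan is to reduce the estimate to a scaling/covering argument on a single small ball. First I would dispose of the easy case $d < p < \infty$: here $W^{1,p}(Y)$ embeds into $C^{0,1-d/p}(Y)$, so the trace of $u$ on $B(0,\eta)$ is Hölder continuous; since $u = 0$ there, $|u(x_0)| = 0$ for some fixed point $x_0$ (say the origin), and the standard Poincaré inequality for functions vanishing at a point — itself a consequence of the Morrey embedding applied on $Y$ — gives $\int_Y |u|^p \le C \int_Y |\nabla u|^p$ with $C$ independent of $\eta$. The content of the lemma is therefore in the range $1 < p \le d$, where the boundary value on the tiny ball $B(0,\eta)$ genuinely helps only through its volume, and the power of $\eta$ must be tracked.

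For $1 < p < d$, I would argue as follows. By the usual Poincaré inequality on $Y$ (no boundary condition),
\begin{equation*}
\int_Y |u|^p\, dx \le C \int_Y |\nabla u|^p\, dx + C \left| \int_Y u\, dx \right|^p,
\end{equation*}
so it suffices to control the average $\fint_Y u$. I would compare the average of $u$ over $Y$ with its average over the small ball $B(0,\eta)$, which is $0$. The natural tool is the representation $u(x) - \fint_{B(0,\eta)} u = $ a potential-type integral of $\nabla u$; equivalently, chaining Poincaré inequalities along a dyadic family of balls $B(0, 2^j\eta)$, $j = 0, 1, \dots, J$ with $2^J\eta \sim 1$, gives
\begin{equation*}
\left| \fint_{B(0,2^{j+1}\eta)} u - \fint_{B(0,2^j\eta)} u \right| \le C (2^j \eta) \fint_{B(0,2^{j+1}\eta)} |\nabla u|,
\end{equation*}
and summing (using Hölder in the last step, with the volume factor $|B(0,2^j\eta)|^{-1/p} \sim (2^j\eta)^{-d/p}$) yields
\begin{equation*}
\left| \fint_Y u \right| \le C \sum_{j=0}^J (2^j\eta)^{1 - d/p} \|\nabla u\|_{L^p(Y)} \le C \eta^{1 - d/p} \|\nabla u\|_{L^p(Y)},
\end{equation*}
where the geometric sum is dominated by its first term precisely because $1 - d/p < 0$. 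Raising to the $p$-th power gives the factor $\eta^{p - d}$, as claimed. The borderline case $p = d$ is the same computation, except $1 - d/p = 0$, so the geometric series degenerates into $J \sim |\ln \eta|$ equal terms; carrying the Hölder sum with the extra $J^{1 - 1/p}$ loss from $\ell^1$-versus-$\ell^p$ produces the $|\ln \eta|^{p-1}$ factor.

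The main obstacle — really the only place that needs care — is making the dyadic chaining estimate rigorous and $\eta$-uniform: one must verify that each local Poincaré inequality on the annulus-type region between consecutive balls carries a constant that scales correctly (i.e., like the radius) and does not blow up as the balls shrink, which follows from scaling invariance of Poincaré on a fixed ball, and one must track the volume normalizations through Hölder so that the exponents of $2^j\eta$ come out to exactly $1 - d/p$. An alternative, perhaps cleaner, route to the same bound is to invoke the Sobolev inequality $\|u\|_{L^{p^*}(Y)} \le C(\|\nabla u\|_{L^p(Y)} + \|u\|_{L^p(Y)})$ with $p^* = dp/(d-p)$ together with Hölder's inequality on $B(0,\eta)$ to estimate $|\fint_{B(0,\eta)} u|$ — but since that average is zero this instead gives a fractional-integral comparison; I expect the dyadic argument to be the most transparent and to self-evidently produce the stated, sharp powers of $\eta$.
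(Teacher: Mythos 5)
Your treatment of the ranges $1<p<d$ (chaining over dyadic balls, geometric sum dominated by its first term) and $d<p<\infty$ (Morrey) is correct, and it is a genuinely different route from the paper's. The gap is in the critical case $p=d$. In your chain the $j$-th increment is bounded, after H\"older, by $C(2^j\eta)^{1-d/p}\|\nabla u\|_{L^p(B(0,2^{j+1}\eta))}$, which for $p=d$ is just $C\|\nabla u\|_{L^p(B(0,2^{j+1}\eta))}$. The claimed $\ell^1$-versus-$\ell^p$ gain of only $J^{1-1/p}$ over $j=0,\dots,J$ requires $\sum_j\|\nabla u\|^p_{L^p(B(0,2^{j+1}\eta))}\le C\|\nabla u\|^p_{L^p(Y)}$, i.e.\ bounded overlap of the regions carrying the local norms. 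But your regions are nested balls: if $\nabla u$ concentrates near $B(0,2\eta)$, every term of that sum is comparable to $\|\nabla u\|^p_{L^p(Y)}$, and the chain only yields $\bigl|\dashint_Y u\,dx\bigr|\le C\,J\,\|\nabla u\|_{L^p(Y)}$, hence $|\ln\eta|^{p}$ after raising to the $p$-th power — not the stated $|\ln\eta|^{p-1}$. This is not a cosmetic loss for this paper: the sharp power ($|\ln(\eta/2)|$ when $p=d=2$) is exactly what is used, and matched by lower bounds, in the two-dimensional estimates.

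The fix stays within your scheme but must be done at the level of averages over dyadic \emph{annuli} $A_j=B(0,2^{j+1}\eta)\setminus B(0,2^j\eta)$: compare consecutive averages via Poincar\'e on $A_j\cup A_{j+1}$ (whose constant scales like $2^j\eta$); since these unions overlap at most twice, $\sum_j\|\nabla u\|^p_{L^p(A_j\cup A_{j+1})}\le 2\|\nabla u\|^p_{L^p(Y)}$, and H\"older then genuinely produces the $J^{1-1/p}$ factor and $|\ln\eta|^{p-1}$. Note that the difference of averages over two nested balls cannot be controlled by the gradient on the annulus between them alone (take $\nabla u$ supported in the inner ball), so your passing mention of ``the annulus-type region between consecutive balls'' does not by itself repair the displayed ball-based estimate. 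For comparison, the paper's proof avoids chaining entirely: writing $u(x)=\int_\eta^{|x|}\omega\cdot\nabla u(t\omega)\,dt$ along rays from the hole, applying H\"older in $t$ with the weight $t^{d-1}$, and integrating in polar coordinates gives all three regimes at once through the single factor $\bigl(\int_\eta^d t^{-\frac{d-1}{p-1}}\,dt\bigr)^{p-1}$, including the sharp logarithm at $p=d$.
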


\begin{proof}
This case $p=2$ is more or less well known. 
The proof for the general case is similar.
We provide a proof for the reader's convenience.
Using $u=0$ on $B(0, \eta)$, we may write 
$$
u(x)=u(r\omega) -u(\eta \omega)=\int_{\eta}^r \omega \cdot \nabla u(t \omega) dt
$$
for any $x\in Y$, where $r=|x|$ and $\omega=x/|x|$.
It follows by H\"older's inequality  that
$$
|u(x)|^p\le \int_\eta^r |\nabla u(t\omega)|^p t^{d-1}\, dt 
\left(\int_\eta^r t^{-\frac{d-1}{p-1}} dt\right)^{p-1}
$$
for $1< p< \infty$.
Thus,
$$
\int_Y | u|^p\, dx
\le C \int_Y |\nabla u|^p\, dx
\left(\int_\eta^d t^{-\frac{d-1}{p-1}} dt\right)^{p-1},
$$
from which the inequalities in \eqref{P} follow readily.
\end{proof}

For $1<p<\infty$,
let $W_0^{1, p}(\Omega)$ denote the completion of $C_0^\infty(\Omega)$ with respect to the norm $\|\nabla \psi\|_{L^p(\Omega)}$ and
$W^{-1, p^\prime}(\Omega)$ the dual of $W_0^{1, p} (\Omega)$.

The following theorem was proved by D. Jerison and C. Kenig in \cite{JK}.

\begin{thm}\label{JK-thm}
Let $\Omega$ be a bounded Lipschitz domain in $\R^d$, $d\ge 2$.
Let $F \in W^{-1, p}(\Omega)$. 
Then  the Dirichlet problem, 
\begin{equation}\label{D-2}
\left\{
\aligned
-\Delta u & =F & \quad & \text{ in } \Omega,\\
u& =0 & \quad  & \text{ on } \partial \Omega,
\endaligned
\right.
\end{equation}
has a unique solution in $W_0^{1, p} (\Omega)$, if 
\begin{equation}\label{Lip-R}
 \left| \frac{1}{p}-\frac12 \right| <
 \left\{
 \aligned
 & \frac16 +\delta & \quad & \text{ for } d\ge 3,\\
&  \frac14 +\delta & \quad & \text{ for } d=2,
\endaligned
\right.
\end{equation}
for some $\delta>0$ depending on $\Omega$. Moreover, the solution satisfies 
\begin{equation}\label{JK-estimate}
\| \nabla u\|_{L^p(\Omega)}
\le C\,  \| F \|_{W^{-1, p} (\Omega)},
\end{equation}
where $C$ depends on $d$, $p$ and the Lipschitz character of $\Omega$.
If $\Omega$ is  a bounded $C^1$ domain,
the estimate \eqref{JK-estimate} holds for $1< p< \infty$.
\end{thm}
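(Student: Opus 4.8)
The plan is to reduce the inhomogeneous problem to a homogeneous boundary value problem via the Newtonian potential, and then to invoke the layer‑potential theory for Lipschitz domains. Since every element of $W^{-1,p}(\Omega)$ can be written as $F=\operatorname{div}(f)$ with $f\in L^p(\Omega;\R^d)$ and $\|f\|_{L^p}\approx \|F\|_{W^{-1,p}}$, I would extend $f$ by zero to $\R^d$ and set $w=\partial_j(\mathcal N * f_j)$, where $\mathcal N$ is the fundamental solution of $-\Delta$. Calder\'on--Zygmund theory gives $w\in W^{1,p}_{\mathrm{loc}}(\R^d)$ with $\|\nabla w\|_{L^p(\R^d)}\le C\|f\|_{L^p}$ and $-\Delta w=F$ in $\R^d$. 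Writing $u=w-h$, the problem becomes: find the harmonic function $h$ in $\Omega$ with boundary trace $h|_{\partial\Omega}=w|_{\partial\Omega}$, together with the estimate $\|\nabla h\|_{L^p(\Omega)}\le C\|w|_{\partial\Omega}\|$. Because $w\in W^{1,p}(\Omega)$, its trace lies in the Besov space $B^{1-1/p}_{p,p}(\partial\Omega)$, so everything reduces to solvability of $\Delta h=0$ in $\Omega$, $h=g$ on $\partial\Omega$, with $g\in B^{1-1/p}_{p,p}(\partial\Omega)$ and $\|\nabla h\|_{L^p(\Omega)}\le C\|g\|_{B^{1-1/p}_{p,p}(\partial\Omega)}$, plus a uniqueness statement in $W^{1,p}_0(\Omega)$.

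For the boundary problem I would first treat $p=2$ by Verchota's inversion of layer potentials: on a Lipschitz domain the double‑layer operator $\tfrac12 I+K$ is invertible on $L^2(\partial\Omega)$ and the single‑layer potential $\mathcal S$ is an isomorphism of $L^2(\partial\Omega)$ onto the Sobolev space $L^2_1(\partial\Omega)$, both facts resting on the Rellich identities together with the $L^2$‑boundedness of the Cauchy‑type singular integrals on Lipschitz graphs (Coifman--McIntosh--Meyer). This yields solvability of the Dirichlet problem with data in $L^2_1(\partial\Omega)$, with control of the nontangential maximal function of $\nabla h$, hence of $\|\nabla h\|_{L^2(\Omega)}$ via the square‑function estimates; the intermediate case $g\in B^{1/2}_{2,2}(\partial\Omega)$ follows by interpolating the $L^2$‑Dirichlet problem ($\alpha=0$) with the regularity problem ($\alpha=1$).

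Next I would extend the range of $p$. Interpolating the $L^2$ estimates against the endpoint estimates for the $L^p$‑Dirichlet problem (Dahlberg, via the $A_\infty$ property of harmonic measure) and the regularity problem, and exploiting the duality between the Dirichlet problem at exponent $p$ and the regularity problem at $p'$, one obtains $\nabla h\in L^p$ for $g$ in the corresponding Besov scale whenever $p$ lies in an interval about $2$; the known sharp solvability ranges on general Lipschitz domains then produce exactly the exponents $\tfrac16$ for $d\ge 3$ and $\tfrac14$ for $d=2$ in \eqref{Lip-R}. Uniqueness in $W^{1,p}_0(\Omega)$ comes from solvability of the adjoint (again Dirichlet) problem at $p'$ together with the density of $C_0^\infty(\Omega)$. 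Finally, when $\Omega$ is of class $C^1$, the operator $K$ is compact on $L^p(\partial\Omega)$ for every $1<p<\infty$ (Fabes--Jodeit--Rivi\`ere), so $\tfrac12 I+K$ is Fredholm of index $0$; injectivity on each $L^p(\partial\Omega)$ follows from the maximum principle, hence it is invertible for all $p$, and the whole scheme runs in the full range $1<p<\infty$.

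The main obstacle is the Lipschitz case and the sharpness of the interval in \eqref{Lip-R}: this is precisely where the heavy harmonic‑analytic input cannot be avoided --- the Cauchy integral bounds, the Rellich identities, the $A_\infty$ theory of harmonic measure, and the delicate interpolation/duality bookkeeping between Dirichlet and regularity problems at conjugate exponents. By contrast, the potential‑theoretic reduction, the trace theory, and the $C^1$ upgrade are routine once the $L^2$ theory and the admissible $p$‑range are available.
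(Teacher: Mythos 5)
This statement is not proved in the paper at all: it is quoted verbatim as an external result of Jerison and Kenig (the paper's ``proof'' is the citation \cite{JK}), so there is no internal argument to compare against. Your outline is, in essence, a faithful roadmap of how that cited theorem is actually established in the literature: reduction by the Newtonian potential and Calder\'on--Zygmund bounds to a harmonic Dirichlet problem with trace data in $B^{1-1/p}_{p,p}(\partial\Omega)$, Verchota's $L^2$ inversion of $\tfrac12 I+K$ and of the single layer (resting on Coifman--McIntosh--Meyer and the Rellich identities), then duality and interpolation with the endpoint $L^p$ results to reach the range \eqref{Lip-R}, and finally the Fabes--Jodeit--Rivi\`ere compactness argument for the $C^1$ upgrade. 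Two caveats on where your sketch leans hardest on black boxes: the sharp exponents $\tfrac16$ (for $d\ge 3$) and $\tfrac14$ (for $d=2$) do \emph{not} come from interpolating the $L^2$ Dirichlet and regularity problems alone --- one needs Dahlberg's $L^p$ Dirichlet solvability for $2-\e<p\le\infty$ and the regularity problem down to the atomic Hardy-space endpoint, together with the careful Besov-scale interpolation and Dirichlet/regularity duality bookkeeping that is the real content of \cite{JK}; you acknowledge this, but it is the heart of the matter rather than a routine step. Likewise, in the $C^1$ case invertibility of $\tfrac12 I+K$ on $L^p(\partial\Omega)$ alone is not quite enough for Besov data; one also needs invertibility on $L^p_1(\partial\Omega)$ (or directly on $B^{1-1/p}_{p,p}(\partial\Omega)$), which is again in Fabes--Jodeit--Rivi\`ere and then interpolates. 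With those provisos, your proposal is a correct account of the proof of the cited theorem, and it is entirely consistent with the way the paper uses it (the paper only needs the statement, uniformly over families of domains with uniform Lipschitz or $C^1$ character, as recorded in Remark \ref{re-char}).
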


\begin{remark}\label{re-char}
{\rm
Let $\Omega$ be a bounded Lipschitz domain.
We may cover $\partial\Omega$ by a finite number of balls $\{ B(x_i, r_0): i=1, 2, \dots, N\}$, where $x_i\in \partial\Omega$ and
$r_0>0$,  so that after a translation and rotation of the coordinate system,
$$
\aligned
B(x_i, 10 r_0)\cap \Omega  & = B(x_i, 10 r_0) \cap \left\{ (x^\prime, x_d)\in \R^d: \ x_d > \psi_i (x^\prime) \right\},\\
B(x_i, 10 r_0)\cap \partial \Omega  & = B(x_i, 10 r_0) \cap \left\{ (x^\prime, x_d)\in \R^d: \ x_d = \psi_i (x^\prime) \right\},
\endaligned
$$
where $\psi_i$ is a Lipschitz function in $\R^{d-1}$ with $\|\nabla \psi_i \|_\infty \le M$.
By a constant $C$ depending on the Lipschitz character of $\Omega$, 
we mean that $C$ depends on $(N, M)$.
If $\{\Omega_k \}$ is a family of bounded Lipschitz domains with the same $N$ and $M$,
we shall call $\{ \Omega_k \}$ a family of bounded domains with uniform Lipschitz boundaries.
Thus the estimates \eqref{JK-estimate} for  the domain $\Omega_k$   hold  uniformly with a constant $C$ independent of $k$,
if $p$ satisfies \eqref{Lip-R} (with a uniform $\delta>0$).
In the case of $C^1$ domains, we also require that the modules of continuity for $\nabla \psi_i$
are  bounded uniformly by a continuous and increasing  function $\xi (t)$ on $[0, \infty)$ with $\xi (0)=0$.
We call such $\{\Omega_k \}$ a family of bounded domains with uniform $C^1$ boundaries.
In this case the estimates \eqref{JK-estimate} for $\Omega_k$
hold uniformly with a constant $C$ independent of $k$ for $1<p<\infty$.
}
\end{remark}

\begin{remark}\label{remark-2.1}
{\rm
Let $F\in L^q(\Omega)$ and $f\in L^p(\Omega; \R^d)$, where $d^\prime=\frac{d}{d-1} < p< \infty$ and $\frac{1}{q}=\frac{1}{p}+\frac{1}{d}$.
It follows from Theorem \ref{JK-thm} that 
 if $\Omega$ is a bounded $C^1$ domain in $\R^d$, $d\ge 2$, and $u$ is a weak solution of
\begin{equation}\label{D-4}
-\Delta u  =F+\text{\rm div}(f)   \quad  \text{ in } \Omega \quad \text{ and } \quad 
u  =0 \quad  \text{ on } \partial \Omega,
\end{equation}
then,
\begin{equation}\label{Jk-F}
\|\nabla u \|_{L^p(\Omega)} \le C \left\{ \| F\|_{L^q(\Omega)} + \| f\|_{L^p(\Omega)} \right\}.
\end{equation}
If $\Omega$ is a bounded Lipschitz domain, the estimate above holds under the additional conditions that
$ p\le 3$ for $d\ge 3$ and $ p\le 4$ for $d=2$.
}
\end{remark}

\begin{lemma}\label{local-lemma-1}
Let  $d\ge 2$ and $d^\prime < p< \infty$.
Let $T$ be the closure of an open subset of $Y$ with $C^1$ boundary.
Assume that $ 0\in T$ and $Y\setminus  {T}$ is connected.
Let $u\in W^{1, p}(  \tY \setminus {\eta T})$ be a weak solution of
\begin{equation}\label{local-1-0}
\left\{
\aligned
-\Delta u& = F +\text{\rm div}(f) & \quad & \text{ in } \tY \setminus { \eta T},\\
u&=0 & \quad & \text{ on } \partial (\eta T),
\endaligned
\right.
\end{equation}
where $\tY =(1+c_0)Y$ and $0< \eta\le 1$.
Then
\begin{equation}\label{local-1-2}
\int_{Y\setminus \eta T} |\nabla u|^p\, dx
\le C\left\{
 \eta^{-p} 
\int_{\tY \setminus \eta T} |u|^p\, dx
+ \int_{\tY \setminus \eta T} |F|^p\, dx
+ \int_{\tY\setminus \eta T} |f|^p\, dx \right\},
\end{equation}
where $C$ depends on $d$, $p$, $c_0$ and $T$.
If $\partial T$ is Lipschitz, the estimate above holds under the additional conditions on $p$ in  \eqref{Lip-R}.
\end{lemma}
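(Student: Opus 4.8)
The plan is to patch two local estimates together: the classical interior $W^{1,p}$ bound for $-\Delta$, valid on all scales, and a \emph{local boundary} $W^{1,p}$ bound near $\partial(\eta T)$ that is made uniform in $\eta$ by rescaling the hole to unit size. The explicit factor $\eta^{-p}$ in \eqref{local-1-2} is precisely the price of that rescaling.

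First I would record the two ingredients, both consequences of Theorem~\ref{JK-thm} (and Remark~\ref{remark-2.1}) via the usual cutoff/flattening localization. \emph{(i) Interior estimate:} if $-\Delta w=G+\text{\rm div}(g)$ in a ball $B(x_0,2r)$, then
\[
\|\nabla w\|_{L^p(B(x_0,r))}\le C\Big(r^{-1}\|w\|_{L^p(B(x_0,2r))}+r\|G\|_{L^p(B(x_0,2r))}+\|g\|_{L^p(B(x_0,2r))}\Big)
\]
for all $1<p<\infty$, with $C=C(d,p)$; one applies \eqref{JK-estimate} on the ball (a $C^1$ domain) to $\varphi w$, writes the commutator term $\nabla\varphi\cdot\nabla w$ in divergence form, bounds $\varphi G$ in $W^{-1,p}$ by $\|G\|_{L^p}$, and rescales to $r=1$. \emph{(ii) Local boundary estimate:} fix $c_1=c_0/4$ and set $W_1=\{y:\text{\rm dist}(y,T)<2c_1\}\setminus T$ and $W_0=\{y:\text{\rm dist}(y,T)\le c_1\}\setminus T$, which are fixed bounded open sets whose only Dirichlet portion is the $C^1$ surface $\partial T$. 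If $v\in W^{1,p}(W_1)$ solves $-\Delta v=G+\text{\rm div}(g)$ in $W_1$ with $v=0$ on $\partial T$, then
\[
\|\nabla v\|_{L^p(W_0)}\le C\Big(\|v\|_{L^p(W_1)}+\|G\|_{L^p(W_1)}+\|g\|_{L^p(W_1)}\Big),\qquad C=C(d,p,c_0,T);
\]
here one covers $\partial T$ by finitely many balls in which it is a $C^1$ graph, localizes, flattens, applies \eqref{JK-estimate} on a globally $C^1$ domain, uses the interior estimate (i) near the outer piece $\{\text{\rm dist}(\cdot,T)=2c_1\}$, and uses $\|h\|_{W^{-1,p}(W_1)}\le C\|h\|_{L^p(W_1)}$ (Poincar\'e on the bounded set $W_1$). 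When $\partial T$ is merely Lipschitz the same works for $p$ in the range \eqref{Lip-R}.

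Next I would rescale the hole. Since $0<\eta\le1$ and $c_1=c_0/4$, one checks that $\eta W_1$ and $\eta T$ lie in $\tY=(1+c_0)Y$, so $v(y):=u(\eta y)$ is a well-defined element of $W^{1,p}(W_1)$, solving $-\Delta v=\widetilde F+\text{\rm div}(\widetilde f)$ in $W_1$ with $v=0$ on $\partial T$, where $\widetilde F(y)=\eta^2F(\eta y)$ and $\widetilde f(y)=\eta f(\eta y)$. Applying (ii) and changing variables back, on the set $V_0:=\eta W_0=\{x:\text{\rm dist}(x,\eta T)\le c_1\eta\}\setminus\eta T$ one obtains
\[
\int_{V_0}|\nabla u|^p\,dx\le C\Big(\eta^{-p}\int_{\tY\setminus\eta T}|u|^p\,dx+\int_{\tY\setminus\eta T}|F|^p\,dx+\int_{\tY\setminus\eta T}|f|^p\,dx\Big),
\]
where the rescaling produces a term $\eta^p\|F\|^p$ which is absorbed by $\|F\|^p$ since $\eta\le1$, and where $\eta W_1\subset\tY\setminus\eta T$.

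Finally I would handle the remaining set $E:=(Y\setminus\eta T)\setminus V_0$, i.e.\ the points $x_0\in Y$ with $\text{\rm dist}(x_0,\eta T)>c_1\eta$. For such $x_0$ put $r(x_0)=\tfrac14\min\{\text{\rm dist}(x_0,\eta T),c_0\}$, so that $B(x_0,2r(x_0))\subset\tY\setminus\eta T$ and $\tfrac{c_1}{4}\eta\le r(x_0)\le\tfrac{c_0}{4}$. A standard covering argument (Besicovitch, or a Whitney decomposition adapted to $\text{\rm dist}(\cdot,\eta T)$) gives a countable subfamily $\{B(x_i,r(x_i))\}$ covering $E$ whose doubled balls have overlap bounded by $C(d)$; summing the $p$-th powers of estimate (i) over this subfamily and using $r(x_i)^{-1}\le\tfrac{4}{c_1}\eta^{-1}$ and $r(x_i)\le\tfrac{c_0}{4}$ yields the same bound as above for $\int_E|\nabla u|^p$. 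Adding the two contributions gives \eqref{local-1-2}. The main obstacle is precisely the uniformity in $\eta$ of the near-hole estimate; the dilation $y\mapsto\eta y$ disposes of it by converting the shrinking hole into the fixed $C^1$ obstacle $T$, at the cost of the power $\eta^{-p}$, while away from the hole the dilation-adapted radii $r(x_0)$ control everything with $\eta$-independent constants.
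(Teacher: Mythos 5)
Your proof is correct and follows essentially the same route as the paper: a near-hole boundary $W^{1,p}$ estimate made uniform in $\eta$ by dilating the hole to unit size and applying \eqref{Jk-F} (this is the paper's \eqref{local-1-4}), combined with an $\eta$-scale localization away from the hole, which produces exactly the $\eta^{-p}\|u\|_{L^p}^p$ term. The only cosmetic difference is in the away-from-hole step, where you sum scaled interior estimates over a Whitney-type covering with radii comparable to the distance to $\eta T$, whereas the paper uses a single cutoff $\varphi$ with $|\nabla \varphi|\le C\eta^{-1}$ and the $W^{1,p}$ estimate for $u\varphi$ in $\R^d$ (its \eqref{local-1-3}); both give the same bound.
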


\begin{proof}
We may assume $\eta \in (0, 1/4)$. The case $\eta\in [1/4, 1]$ follows from the $W^{1, p}$ estimate \eqref{Jk-F}
for Laplace's equation  in $C^1$ and Lipschitz domains, by a localization argument.
In fact, the case $\eta\in (0, 1/4)$ also follows readily  from  \eqref{Jk-F} by a localization argument.
To see this, choose a cut-off function $\varphi\in C_0^\infty (\tY\setminus \eta T)$ such that $0\le \varphi\le 1$ in $\tY \setminus \eta T$, 
$\varphi =1$ in $Y\setminus 2\eta T$, and
$$
\aligned
& |\nabla \varphi |\le C \eta^{-1}, \ \ \ |\nabla^2 \varphi| \le C \eta^{-2} \quad \text{ in } 2\eta T\setminus \eta T,\\
& |\nabla \varphi | + |\nabla^2 \varphi| \le C \quad \text{ in } \tY \setminus Y.
\endaligned
$$
Using
\begin{equation}\label{local-f}
-\Delta (u \varphi)
=F \varphi +\text{\rm div} (f \varphi) - f \cdot \nabla \varphi - 2\,  \text{\rm div} ( u \nabla \varphi) + u \Delta \varphi
\end{equation}
in $\R^d$, we obtain 
$$
\| \nabla (u \varphi)\|_{L^p(\R^d)}
\le C \left\{  \| f  \varphi \|_{L^p(\R^d)} + \| u \nabla \varphi \|_{L^p(\R^d)}
+ \| F \varphi\|_{L^q(\R^d)} + \| f \nabla \varphi \|_{L^q (\R^d)}
+ \| u \Delta \varphi \|_{L^q (\R^d)}\right\},
$$
where $d^\prime< p< \infty$ and  $\frac{1}{q} =\frac{1}{p} +\frac{1}{d}$.
This gives
\begin{equation}\label{local-1-3}
\| \nabla u \|_{L^p(Y\setminus  2\eta T)}
\le C \left\{ \| f\|_{L^p(\tY \setminus \eta T)} + \| F \|_{L^p(\tY \setminus \eta T)}
+ \eta^{-1} \| u \|_{L^p(\tY \setminus \eta T)} \right\},
\end{equation}
where we have used the observation  $\| u\|_{L^q(2\eta T \setminus \eta T)} \le C \eta \| u\|_{L^p(2\eta T\setminus \eta T)}$.

Finally, if  $\partial T$ is $C^1$ and $u=0$ on $\partial T$, it follows from \eqref{Jk-F} by a rescaling argument that
\begin{equation}\label{local-1-4}
\| \nabla u \|_{L^p(2\eta T\setminus \eta T)}
\le C \left\{ \eta^{-1} \| u\|_{L^p (3\eta T\setminus \eta T)}
+ \| f \|_{L^p(3\eta T\setminus \eta T)}
+ \eta \| F \|_{L^p(3\eta T\setminus  \eta T)}
\right\}
\end{equation}
for $1<p< \infty$.
If $\partial T$ is Lipschitz, the same is true under the additional conditions in \eqref{Lip-R}.
The estimate \eqref{local-1-4}, together with \eqref{local-1-3},  yields \eqref{local-1-2}.
\end{proof}


\section{Global estimates}\label{section-G}

Let $\{T_k: k \in \mathbb{Z}^d\}$ be a family of closed subsets of $Y=[-1/2, 1/2]^d$.
Assume that each  $T_k$ is the  closure of a bounded Lipschitz domain and satisfies  the condition 
\eqref{condition-0}. 
We also assume that $Y\setminus {T_k}$ is connected.

\begin{lemma}\label{lemma-3.1}
Let $\omega_{\e, \eta}$ be given by \eqref{omega}.
Then, for any $u\in W_0^{1, 2}(\omega_{\e, \eta})$, 
\begin{equation}\label{3.1-0}
\int_{\omega_{\e, \eta}} |u|^2\, dx
\le C\int_{\omega_{\e, \eta} } |\nabla u|^2\, dx
\cdot 
\left\{
\aligned
 & \e^2 \eta^{2-d} & \quad & \text{  if   \ \ } d\ge 3,\\
 &\e^2  |\ln (\eta/2)| &\quad &  \text{ if \ \ } d=2,
 \endaligned
 \right.
 \end{equation}
 where $0< \e, \eta\le 1$ and   $C$ depends on $d$ and $c_0$ in \eqref{condition-0}.
 \end{lemma}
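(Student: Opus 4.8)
The plan is to reduce \eqref{3.1-0} to the single-cell Poincaré inequality of Lemma~\ref{p-lemma} by rescaling each periodicity cube to the unit cube $Y$, and then to sum over $k\in\mathbb{Z}^d$. Since, by definition, $W_0^{1,2}(\omega_{\e,\eta})$ is the completion of $C_0^\infty(\omega_{\e,\eta})$ with respect to $\|\nabla\cdot\|_{L^2(\omega_{\e,\eta})}$, it suffices to prove \eqref{3.1-0} for $u\in C_0^\infty(\omega_{\e,\eta})$ and then pass to the limit (the inequality itself shows the completion embeds into $L^2$). For such $u$ I would extend by zero to all of $\R^d$; since $u$ is compactly supported inside the open set $\omega_{\e,\eta}$, the extension is still smooth and vanishes in a neighborhood of each deleted hole $\e(k+\eta T_k)$, and the $L^2$ norms of $u$ and $\nabla u$ over $\R^d$ coincide with those over $\omega_{\e,\eta}$.

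Next, because the cubes $\{\e(k+Y):k\in\mathbb{Z}^d\}$ tile $\R^d$ up to measure zero, I would estimate $\int_{\e(k+Y)}|u|^2\,dx$ cube by cube. Fixing $k$ and setting $v(y)=u(\e k+\e y)$ for $y\in Y$, the key observation is that $v\in C^\infty(Y)$ vanishes on $\eta T_k$, hence on $B(0,c_0\eta)$, since $B(0,c_0)\subset T_k$ by \eqref{condition-0}. After shrinking $c_0$ so as to assume $c_0<1/4$ (harmless, since \eqref{condition-0} remains valid), Lemma~\ref{p-lemma} applies to $v$ with radius $c_0\eta$ and exponent $p=2$ — which falls in the range $1<p<d$ when $d\ge3$ and equals $d$ when $d=2$ — and gives
\begin{equation*}
\int_Y|v|^2\,dy\le C\int_Y|\nabla v|^2\,dy\cdot
\begin{cases}
(c_0\eta)^{2-d},& d\ge3,\\
|\ln(c_0\eta)|,& d=2.
\end{cases}
\end{equation*}
Since $c_0<1/4$ is fixed, $(c_0\eta)^{2-d}\le C\eta^{2-d}$ and $|\ln(c_0\eta)|\le C|\ln(\eta/2)|$ uniformly for $\eta\in(0,1]$. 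Undoing the scaling, via $\int_Y|v|^2\,dy=\e^{-d}\int_{\e(k+Y)}|u|^2\,dx$ and $\int_Y|\nabla v|^2\,dy=\e^{2-d}\int_{\e(k+Y)}|\nabla u|^2\,dx$, turns this into the same inequality for $u$ on $\e(k+Y)$ with an extra factor $\e^2$ and a constant depending only on $d$ and $c_0$. Summing over $k\in\mathbb{Z}^d$ then yields \eqref{3.1-0}, and a density argument extends it to all $u\in W_0^{1,2}(\omega_{\e,\eta})$.

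The proof is mostly routine bookkeeping, and the single point that requires attention is the uniformity of the constant in $k$: the domains $T_k$ are allowed to vary with $k$, so one must be sure that the Poincaré constant used on each cube does not depend on the shape of the individual hole. This is precisely guaranteed by the uniform lower bound $B(0,c_0)\subset T_k$ in \eqref{condition-0}, which lets us apply Lemma~\ref{p-lemma} with the $k$-independent vanishing radius $c_0\eta$; this is the only feature of $\{T_k\}$ that enters. The borderline regime $\eta$ close to $1$ (where the ``hole'' is not small) causes no trouble, since the right-hand factors $\eta^{2-d}$ and $|\ln(\eta/2)|$ are then bounded below by positive constants and the claim reduces to the standard Poincaré inequality on $Y$.
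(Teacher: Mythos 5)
Your proof is correct and follows essentially the same route as the paper: rescale each cell to the unit cube, apply the Poincar\'e inequality of Lemma~\ref{p-lemma} with $p=2$ using the inclusion $B(0,c_0)\subset T_k$ from \eqref{condition-0}, and sum over $k\in\mathbb{Z}^d$. Your extra care about the vanishing radius $c_0\eta$, the restriction $\eta<1/4$ in Lemma~\ref{p-lemma}, and the density argument simply fills in details the paper leaves implicit.
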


\begin{proof}
By rescaling we may assume $\e=1$.
Assume $d\ge 3$. By applying Poincar\'e's inequality \eqref{P}, we obtain 
$$
\int_{k + (Y\setminus \eta T_k)} |u|^2 \, dx
\le C \eta^{2-d} \int_{k +(Y\setminus \eta T_k)} |\nabla u|^2\, dx
$$ 
for each $k \in \mathbb{Z}^d$, where we have used the assumption  $B(0, c_0)\subset T_k$.
The inequality \eqref{3.1-0} with $\e=1$ and $d\ge 3$ follows by summing the inequalities above over $k \in \mathbb{Z}^d$.
The proof for the case $d=2$ is the same.
\end{proof}

\begin{remark}\label{remark-3.1}
{\rm
Let $\Omega_{\e, \eta}$ be given by \eqref{Omega}, where $\Omega$ is a bounded Lipschitz domain in $\mathbb{R}^d$, $d\ge 2$.
Then,   for $u\in W_0^{1, 2} (\Omega_{\e, \eta})$,
\begin{equation}\label{u-P}
\int_{\Omega_{\e, \eta}} |u|^2\, dx
\le C \int_{\Omega_{\e, \eta}} |\nabla u|^2\, dx,
\end{equation}
where $C$ depends on $\Omega$.
This follows by extending $u$ by zero to $\Omega$ and applying Poincar\'e's inequality for $\Omega$.
Moreover, 
\begin{equation}\label{3.1-O}
\int_{\Omega_{\e, \eta}} |u|^2\, dx
\le C\int_{\Omega_{\e, \eta} } |\nabla u|^2\, dx
\cdot 
\left\{
\aligned
 & \e^2 \eta^{2-d} & \quad & \text{  if   \ \ } d\ge 3,\\
 &\e^2  |\ln (\eta/2)| &\quad &  \text{ if \ \ } d=2,
 \endaligned
 \right.
 \end{equation}
for any $u\in W_0^{1, 2} (\Omega_{\e, \eta})$.
To see \eqref{3.1-O}, we may assume $\e$ is sufficiently small.
For otherwise, the desired estimate follows from \eqref{u-P}.
We cover $\Omega$ by a family of cubes $\{ \e (k + Y) \}$ of size $\e$, where $k\in \mathbb{Z}^d$ and
$\e (k +Y) \cap \Omega \neq \emptyset$.
The case $\e(k+Y)\subset \Omega$ is handled in the same manner as in the proof of Lemma \ref{lemma-3.1}.
If $\e(k_0+Y) \cap \partial \Omega \neq \emptyset$ for some $k_0\in \mathbb{Z}^d$, we may find $k_1\in \mathbb{Z}^d$ such that
$\e (k_1 + Y)\subset \Omega$ and 
$$
\e (k_0 +Y)\subset 5 \e (k_1+Y).
$$
It follows from the proof of Lemma \ref{p-lemma} that 
$$
\int_{\e (k_0 +Y)} |u|^2\, dx
\le \int_{5 \e (k_1 +Y)} |u|^2\, dx
\le C \e^2 \eta^{2-d} \int_{5 \e (k_1+Y)} |\nabla u|^2\, dx
$$
for $d\ge 3$,
where we have extended $u$ by zero to $\R^d$.
As a result,
$$
\int_{\Omega_{\e, \eta}}  |u|^2\, dx
\le C \e^2 \eta^{2-d} \int_{\R^d} |\nabla u|^2\, dx 
\le C \e^2 \eta^{2-d} \int_{\Omega_{\e, \eta}} |\nabla u|^2\, dx.
$$
The proof for the case $d=2$ is the same.
}
\end{remark}

It is not hard  to see that 
$$
W_0^{1, 2}(\omega_{\e, \eta}) = \left\{ u\in W^{1, 2} (\R^d): u=0 \text{ on } \R^d\setminus  \omega_{\e, \eta} \right\}.
$$
Using Lemma \ref{lemma-3.1} and the Lax-Milgram Theorem, one may show that, given  $F \in L^2(\omega_{\e, \eta})$ and
$f\in L^2(\omega_{\e, \eta }; \R^d)$, the Dirichlet problem 
\begin{equation}\label{D-3-0}
\left\{
\aligned
-\Delta u & = F +\text{\rm div} (f)  & \quad & \text{ in } \omega_{\e, \eta}, \\
u & =0 & \quad & \text{ on } \partial \omega_{\e, \eta},
\endaligned
\right.
\end{equation}
has a unique solution in $W_0^{1, 2} (\omega_{\e, \eta})$.
Moreover, if $d\ge 3$,
 the solution satisfies
\begin{equation}\label{3.1-e}
\| \nabla u\|_{L^2(\omega_{\e, \eta})}
+ \e^{-1} \eta^{\frac{d-2}{2}} \|u\|_{L^2(\omega_{\e, \eta})}
\le C \left\{
 \e \eta^{\frac{2-d}{2}} \| F \|_{L^2(\omega_{\e, \eta})}
+ \|f \|_{L^2(\omega_{\e, \eta})} \right\},
\end{equation}
and  if  $d=2$, 
\begin{equation}\label{3.1-2e}
\aligned
\| \nabla u\|_{L^2(\omega_{\e, \eta})}
& + \e^{-1}  | \ln (\eta/2)|^{-1/2}  \|u\|_{L^2(\omega_{\e, \eta})}\\
& \le C \left\{
 \e  |\ln (\eta/2)|^{1/2}   \| F \|_{L^2(\omega_{\e, \eta})}
+ \|f \|_{L^2(\omega_{\e, \eta})} \right\}.
\endaligned
\end{equation}
The constants  $C$ in \eqref{3.1-e} and \eqref{3.1-2e}
depend only on $d$ and $c_0$ in \eqref{condition-0}.

The following is one of the key estimates in this paper.

\begin{thm}\label{thm-3.0}
Let $\omega_{\e, \eta}$ be given by \eqref{omega} and $2\le p< \infty$.
Given $F\in L^2(\omega_{\e, \eta}) \cap L^p(\omega_{\e, \eta})$ and
$f\in L^2(\omega_{\e, \eta};  \R^d) \cap L^p (\omega_{\e, \eta};  \R^d)$,
let $u$ denote  the weak solution of \eqref{D-3-0} in $W_0^{1, 2} (\omega_{\e, \eta})$.
Then $u\in L^p (\omega_{\e, \eta})$ and
\begin{equation}\label{g-3-0}
\| u \|_{L^p(\omega_{\e, \eta})}
\le C \left\{
\e^2 \eta^{2-d} \| F\|_{L^p(\omega_{\e, \eta})}
+ \e \eta^{\frac{2-d}{2}} \| f\|_{L^p(\omega_{\e, \eta})} \right\},
\end{equation}
where $d\ge 3$ and  $C$ depends on $d$, $p$ and $c_0$.
If $d=2$, we have
\begin{equation}\label{g-3-1}
\| u \|_{L^p(\omega_{\e, \eta})}
\le C \left\{
\e^2 |\ln (\eta/2)|  \| F\|_{L^p(\omega_{\e, \eta})}
+ \e |\ln (\eta/2)|^{1/2}  \| f\|_{L^p(\omega_{\e, \eta})} \right\}.
\end{equation}
\end{thm}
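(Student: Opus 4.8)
The plan is to use the Moser-type test function $v_\ell=\min\{|u|^{p-2},\ell^{p-2}\}$ indicated in the introduction, organized so that the a priori finiteness of $\|u\|_{L^p(\omega_{\e,\eta})}$ is never invoked. By \eqref{3.1-e}--\eqref{3.1-2e} the solution $u$ lies in $W_0^{1,2}(\omega_{\e,\eta})\subset L^2(\omega_{\e,\eta})$. For $\ell\ge 1$ set $\varphi:=u\,v_\ell$ and introduce the auxiliary function $G_\ell:=u\min\{|u|^{(p-2)/2},\ell^{(p-2)/2}\}$, through which all estimates will be routed. Since $\varphi$ and $G_\ell$ are Lipschitz functions of $u$ vanishing at the origin, both belong to $W_0^{1,2}(\omega_{\e,\eta})$, and for each fixed $\ell$ the norms $\|G_\ell\|_{L^2}$, $\|\nabla G_\ell\|_{L^2}$, $\|\varphi\|_{L^2}$, $\|\nabla\varphi\|_{L^2}$ are finite (bounded by $\ell$-dependent multiples of $\|u\|_{W^{1,2}}$), which is what lets the argument run without circularity. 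One checks that $G_\ell^2\le|u|^p$ pointwise with $G_\ell^2\uparrow|u|^p$ as $\ell\to\infty$; hence it suffices to bound $\|G_\ell\|_{L^2}$ uniformly in $\ell$ and pass to the limit by monotone convergence at the end.

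Next I would record the pointwise identities obtained by splitting into $\{|u|<\ell\}$ and $\{|u|>\ell\}$: namely $\nabla u\cdot\nabla\varphi\ge c_p\,|\nabla G_\ell|^2$ with $c_p=4(p-1)/p^2$, together with $|\varphi|=(\min\{|u|,\ell\})^{p-2}|u|$ and $|\nabla\varphi|\le C_p(\min\{|u|,\ell\})^{(p-2)/2}|\nabla G_\ell|$. The one elementary inequality doing the work is $(\min\{|u|,\ell\})^{p/2}\le G_\ell$ (with equality on $\{|u|\le\ell\}$), which upgrades these to $|\varphi|\le G_\ell^{\,2-2/p}$ and $(\min\{|u|,\ell\})^{(p-2)/2}\le G_\ell^{(p-2)/p}$. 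Testing the weak formulation of \eqref{D-3-0} against $\varphi$ then gives $c_p\|\nabla G_\ell\|_{L^2}^2\le\int_{\omega_{\e,\eta}} F\varphi-\int_{\omega_{\e,\eta}} f\cdot\nabla\varphi$; by H\"older with exponents $p$ and $p'=p/(p-1)$ (note $p'(2-2/p)=2$) one has $|\int F\varphi|\le\|F\|_{L^p}\|G_\ell\|_{L^2}^{2-2/p}$, and by Cauchy--Schwarz followed by H\"older (with exponents $p/2$ and $p/(p-2)$) one has $|\int f\cdot\nabla\varphi|\le C_p\|\nabla G_\ell\|_{L^2}\|f\|_{L^p}\|G_\ell\|_{L^2}^{1-2/p}$. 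Absorbing the $\nabla G_\ell$ factor in the last term by Young's inequality yields
\[
\|\nabla G_\ell\|_{L^2}^2\le C\big(\|F\|_{L^p}\|G_\ell\|_{L^2}^{2-2/p}+\|f\|_{L^p}^2\|G_\ell\|_{L^2}^{2-4/p}\big),\qquad C=C(p).
\]

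To close, I would apply the perforated-domain Poincar\'e inequality, Lemma \ref{lemma-3.1}, to $G_\ell$, giving $\|G_\ell\|_{L^2}^2\le C\kappa\|\nabla G_\ell\|_{L^2}^2$ with $\kappa=\e^2\eta^{2-d}$ if $d\ge 3$ and $\kappa=\e^2|\ln(\eta/2)|$ if $d=2$. Combining with the previous display, dividing by $\|G_\ell\|_{L^2}^{2-4/p}$ (the case $G_\ell\equiv 0$ being trivial), and writing $W:=\|G_\ell\|_{L^2}^{2/p}$ turns the estimate into the quadratic inequality $W^2\le C\kappa\|F\|_{L^p}W+C\kappa\|f\|_{L^p}^2$, whence $W\le C\kappa\|F\|_{L^p}+C\kappa^{1/2}\|f\|_{L^p}$, i.e.\ $\|G_\ell\|_{L^2}\le(C\kappa\|F\|_{L^p}+C\kappa^{1/2}\|f\|_{L^p})^{p/2}$ uniformly in $\ell$. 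Letting $\ell\to\infty$ and using $\|G_\ell\|_{L^2}^2\uparrow\|u\|_{L^p}^p$ gives $\|u\|_{L^p(\omega_{\e,\eta})}\le C\kappa\|F\|_{L^p}+C\kappa^{1/2}\|f\|_{L^p}$, which is \eqref{g-3-0} for $d\ge 3$ and \eqref{g-3-1} for $d=2$; the final constant depends only on $d$, $p$ and $c_0$, through Lemma \ref{lemma-3.1} and the elementary inequalities.

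The parts I expect to require the most care are the exponent bookkeeping — it is exactly the choices above that make the right-hand side a function of $\|G_\ell\|_{L^2}$ and $\|\nabla G_\ell\|_{L^2}$ alone and collapse the final inequality to a quadratic one — and the treatment of the divergence datum $f$, where $\nabla\varphi$ must first be expressed through $\nabla G_\ell$ and the weight $(\min\{|u|,\ell\})^{(p-2)/2}$ before applying Cauchy--Schwarz. Working with the truncated $G_\ell$ rather than directly with $|u|^{p/2}$ is what keeps every norm finite at each level $\ell$, so that the bound is genuinely derived and the membership $u\in L^p(\omega_{\e,\eta})$ is part of the conclusion rather than a hypothesis; note also that no regularity of the holes $\{T_k\}$ enters anywhere in this argument.
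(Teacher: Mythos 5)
Your proposal is correct and follows essentially the same route as the paper: testing with $u\min\{|u|^{p-2},\ell^{p-2}\}$, passing to the auxiliary function $G_\ell$ (the paper's $w_\ell$), applying the perforated-domain Poincar\'e inequality of Lemma \ref{lemma-3.1} together with H\"older, and closing the truncated estimate uniformly in $\ell$. The only differences are cosmetic (you solve a quadratic inequality in $\|G_\ell\|_{L^2}^{2/p}$ where the paper absorbs via Young's inequality, and you use monotone convergence where the paper uses Fatou), so no further comparison is needed.
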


\begin{proof}
We give the proof for the case $d\ge 3$.
The proof for the case $d=2$ is similar.

For $\ell \ge 1$, let 
$$
v_\ell =  \min \{ |u|^{p-2}, \ell^{p-2} \} u , 
$$
where $p>2$ (the case $p=2$ is given by \eqref{3.1-e} and \eqref{3.1-2e}).
Note that
$$
\aligned
\nabla v_\ell  & = (p-2) |u|^{p-2} \chi_{\{|u|< \ell\} }  \nabla u
+ \min (|u|^{p-2}, \ell^{p-2}) \nabla u\\
&= (p-1) |u|^{p-2} \chi_{ \{ |u|< \ell\}} \nabla u + \ell^{p-2} \chi_{ \{ |u|\ge \ell \} } \nabla u.
\endaligned
$$
It follows that $v_\ell \in W^{1, 2}_0(\omega_{\e, \eta})$ and 
$$
\int_{\omega_{\e, \eta}} \nabla u \cdot \nabla v_\ell \, dx
=\int_{\omega_{\e, \eta}} F v_\ell\, dx
-\int_{\omega_{\e, \eta}} f \cdot \nabla v_\ell\, dx.
$$
This gives
$$
\aligned
& c \int_{\omega_{\e, \eta}} \min (|u|^{p-2}, \ell^{p-2}) |\nabla u|^2\, dx\\
&\le \int_{\omega_{\e, \eta}} |F| \min (|u|^{p-2}, \ell^{p-2}) |u|\, dx
+ \int_{\omega_{\e, \eta}} |f| \min (|u|^{p-2}, \ell^{p-2}) |\nabla u|\, dx,
\endaligned
$$
where $c$ depends on $p$.
By using the Cauchy inequality we obtain 
\begin{equation}\label{g-3-1-1}
\aligned
&  \int_{\omega_{\e, \eta}} \min (|u|^{p-2}, \ell^{p-2}) |\nabla u|^2\, dx\\
&\le  C \left\{ \int_{\omega_{\e, \eta}} |F| \min (|u|^{p-2}, \ell^{p-2}) |u|\, dx
+ \int_{\omega_{\e, \eta}} |f|^2 \min (|u|^{p-2}, \ell^{p-2}) \, dx\right\}.
\endaligned
\end{equation}

Next, let
$$
w_\ell = \min (|u|^{\frac{p}{2}-1}, \ell^{\frac{p}{2}-1}) u.
$$
Note that $w\in W^{1, 2} _0(\omega_{\e, \eta})$ and
$$
|\nabla w_\ell| \approx
\min (|u|^{\frac{p}{2}-1}, \ell^{\frac{p}{2}-1}) |\nabla u|.
$$
It follows by Lemma \ref{lemma-3.1} and \eqref{g-3-1-1}  that
$$
\aligned
\int_{\omega_{\e, \eta}}
|w_\ell|^2\, dx
 & \le C \e^2 \eta^{2-d} \int_{\omega_{\e, \eta}} |\nabla w_\ell|^2\, dx\\
 &\le C \e^2 \eta^{2-d}
 \Bigg\{
 \| F \|_{L^p(\omega_{\e, \eta})}
 \left(\int_{\omega_{\e, \eta}}  \left( \min( |u|^{p-2}, \ell^{p-2}) |u|\right)^{p^\prime}\, dx \right)^{1/p^\prime}\\
& \qquad\qquad\qquad\qquad
 + \| f\|_{L^p(\omega_{\e, \eta})}^2
 \left(\int_{\omega_{\e, \eta}} \left(\min (|u|^{p-2}, \ell^{p-2})\right)^{\frac{p}{p-2}} \, dx \right)^{1-\frac{2}{p}}
 \Bigg\},
\endaligned
$$
where we have used H\"older's inequality for the last step.

Finally, we observe that 
$$
|w_\ell|^2 = \min (|u|^{p-2}, \ell^{p-2} ) |u|^2,
$$
and  thus 
$$
\aligned
 \left( \min( |u|^{p-2}, \ell^{p-2}) |u|\right)^{p^\prime} & \le |w_\ell|^2,\\
  \left(\min (|u|^{p-2}, \ell^{p-2})\right)^{\frac{p}{p-2}} 
  &\le |w_\ell|^2,
  \endaligned
 $$
 where we have used the assumption  $p>2$.
 As a result, we obtain 
 $$
 \aligned
 \| w_\ell\|_{L^2(\omega_{\e, \eta})}^2
  & \le C \e^2 \eta^{2-d}
 \left\{
 \| F \|_{L^p(\omega_{\e, \eta})} \| w_\ell \|_{L^2(\omega_{\e, \eta})}^{2/p^\prime}
 + \| f\|_{L^p(\omega_{\e, \eta})}^2
 \| w_\ell \|_{L^2(\omega_{\e, \eta})}^{2-\frac{4}{p}}\right\}\\
 &\le  C (\e^2 \eta^{2-d})^p \| F\|_{L^p(\omega_{\e, \eta})}^p
 + C (\e^2 \eta^{2-d})^{\frac{p}{2} }\| f\|_{L^p(\omega_\e, \eta)}^p
 + (1/2) \| w_\ell \|_{L^2(\omega_{\e, \eta})}^2,
 \endaligned
$$
where we have used Young's inequality for the last step.
Thus, we have proved that
$$
\int_{\omega_{\e, \eta}}
|w_\ell|^2\, dx
\le C (\e^2 \eta^{2-d})^p
\int_{\omega_{\e, \eta}} |F|^p\, dx
+ C (\e^2 \eta^{2-d})^{p/2} \int_{\omega_{\e, \eta}} |f|^p\, dx,
$$
from which the estimate \eqref{g-3-0} follows by letting $\ell \to \infty$ and using  Fatou's Lemma.
\end{proof}

The estimates in Theorem \ref{thm-3.0} also hold for a bounded perforated domain.

\begin{thm}\label{thm-3.1}
Let $\Omega_{\e, \eta}$ be given by \eqref{Omega}, where $\Omega$ is a bounded Lipschitz domain.
Let $2\le p< \infty$.
Given $F\in L^p(\Omega_{\e, \eta})$ and $f\in L^p(\Omega_{\e, \eta}; \R^d)$,
let $u$ denote the weak solution of \eqref{D-01} in $W_0^{1, 2}(\Omega_{\e, \eta})$.
Then $u\in L^p (\Omega_{\e, \eta})$ and
\begin{equation}\label{g-3.1-0}
\| u \|_{L^p(\Omega_{\e, \eta})}
\le C \left\{
\min (1, \e^2\eta^{2-d}) \| F \|_{L^p(\Omega_{\e, \eta})}
+ \min (1, \e \eta^{\frac{2-d}{2}}) \| f\|_{L^p(\Omega_{\e, \eta})} \right\},
\end{equation}
where $d\ge 3$ and $C$ depends on $d$, $p$, $c_0$ and $\Omega$.
If $d=2$, we have 
\begin{equation}\label{g-3.1-1}
\| u \|_{L^p(\Omega_{\e, \eta})}
\le C \left\{
\min (1, \e^2 |\ln (\eta/2)|) \| F \|_{L^p(\Omega_{\e, \eta})}
+ \min (1, \e  |\ln (\eta/2)|^{1/2} ) \| f\|_{L^p(\Omega_{\e, \eta})} \right\}.
\end{equation}
\end{thm}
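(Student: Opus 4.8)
The plan is to transcribe, almost verbatim, the Moser-type argument from the proof of Theorem~\ref{thm-3.0}, the only structural change being that the Poincar\'e inequality for $\omega_{\e,\eta}$ (Lemma~\ref{lemma-3.1}) is replaced by its bounded-domain counterpart recorded in Remark~\ref{remark-3.1}. The key observation is that for $u\in W_0^{1,2}(\Omega_{\e,\eta})$ one has \emph{both} \eqref{u-P} and \eqref{3.1-O} at hand, so that
\[
\int_{\Omega_{\e,\eta}} |u|^2\,dx \le C\,\min\!\big(1,\e^2\eta^{2-d}\big)\int_{\Omega_{\e,\eta}}|\nabla u|^2\,dx \qquad (d\ge3),
\]
and likewise with $\e^2|\ln(\eta/2)|$ in place of $\e^2\eta^{2-d}$ when $d=2$. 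Every place where the factor $\e^2\eta^{2-d}$ enters the proof of Theorem~\ref{thm-3.0} through Lemma~\ref{lemma-3.1} will now carry the factor $\min(1,\e^2\eta^{2-d})$, which is precisely the source of the $\min$ in \eqref{g-3.1-0}--\eqref{g-3.1-1}. Note also that since $\Omega_{\e,\eta}$ is bounded and $p\ge 2$, the data $F,f$ lie in $L^2$, so the weak solution $u\in W_0^{1,2}(\Omega_{\e,\eta})$ of \eqref{D-01} exists and is unique by Lax--Milgram together with \eqref{u-P}.

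For $p=2$ I would argue directly: testing \eqref{D-01} with $u$ and using the displayed Poincar\'e inequality gives $\|\nabla u\|_{L^2(\Omega_{\e,\eta})}\le C\min(1,\e\eta^{\frac{2-d}{2}})\|F\|_{L^2}+C\|f\|_{L^2}$, hence $\|u\|_{L^2(\Omega_{\e,\eta})}\le C\min(1,\e^2\eta^{2-d})\|F\|_{L^2}+C\min(1,\e\eta^{\frac{2-d}{2}})\|f\|_{L^2}$. This settles the case $p=2$ and simultaneously records the bounded-domain analogue of \eqref{3.1-e}, which serves as the base case. For $p>2$ I would run the iteration exactly as in Theorem~\ref{thm-3.0}: set $v_\ell=\min(|u|^{p-2},\ell^{p-2})u$ and $w_\ell=\min(|u|^{\frac{p}{2}-1},\ell^{\frac{p}{2}-1})u$, both in $W_0^{1,2}(\Omega_{\e,\eta})$ since they vanish wherever $u$ does; test the equation with $v_\ell$ and use Cauchy's inequality to obtain the analogue of \eqref{g-3-1-1}; then apply the displayed Poincar\'e inequality to $w_\ell$ --- this is the single step where the geometry of $\Omega_{\e,\eta}$ enters, and it is legitimate precisely because $w_\ell\in W_0^{1,2}(\Omega_{\e,\eta})$. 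Following this with H\"older's inequality, the pointwise bounds $(\min(|u|^{p-2},\ell^{p-2})|u|)^{p^\prime}\le|w_\ell|^2$ and $(\min(|u|^{p-2},\ell^{p-2}))^{\frac{p}{p-2}}\le|w_\ell|^2$, and Young's inequality to absorb $\tfrac12\|w_\ell\|_{L^2}^2$, one arrives at $\|w_\ell\|_{L^2}^2\le C(\min(1,\e^2\eta^{2-d}))^{p}\|F\|_{L^p}^p+C(\min(1,\e^2\eta^{2-d}))^{p/2}\|f\|_{L^p}^p$. Letting $\ell\to\infty$ and invoking Fatou's lemma yields \eqref{g-3.1-0}; the case $d=2$ is identical with $\e^2|\ln(\eta/2)|$ in place of $\e^2\eta^{2-d}$, giving \eqref{g-3.1-1}.

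I do not expect a genuine obstacle here, since the argument is a faithful copy of the proof of Theorem~\ref{thm-3.0}. The two points deserving a word of care are: (i) justifying that $W_0^{1,2}(\Omega_{\e,\eta})$ is well behaved and that \eqref{u-P} and \eqref{3.1-O} apply to $u$ and to $w_\ell$ --- this is fine because in \eqref{Omega} the removed holes are compactly contained in $\Omega$, so $\Omega_{\e,\eta}$ retains the boundary behavior of $\Omega$ near $\partial\Omega$ while the interior holes, being uniformly Lipschitz, pose no difficulty for the $W^{1,2}$ theory; and (ii) carrying the $\min$ consistently through the powers produced by Young's inequality, so that $(\min(1,\e^2\eta^{2-d}))^{p}$ and $(\min(1,\e^2\eta^{2-d}))^{p/2}$ are exactly the $p$-th powers of the constants advertised in \eqref{g-3.1-0}.
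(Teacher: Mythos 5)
Your proposal is correct and follows essentially the same route as the paper: the paper reruns the Moser-type iteration of Theorem~\ref{thm-3.0} twice for $\Omega_{\e,\eta}$ --- once with the scaled Poincar\'e inequality \eqref{3.1-O} and once with the plain inequality \eqref{u-P}, yielding \eqref{g-3.1-2} --- and then combines the two bounds, whereas you fold the two inequalities into the single constant $\min(1,\e^2\eta^{2-d})$ before running the iteration once. The two presentations are equivalent, and your bookkeeping of the powers $K^p$ and $K^{p/2}$ through Young's inequality matches the constants in \eqref{g-3.1-0}--\eqref{g-3.1-1}.
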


\begin{proof}
By Remark \eqref{remark-3.1}, 
 the estimates in \eqref{3.1-0} continue to hold if $\omega_{\e, \eta}$ is replaced  by $\Omega_{\e, \eta}$.
 Consequently, an inspection of the proof of Theorem \ref{thm-3.0}
 shows that the estimates \eqref{g-3-0} and \eqref{g-3-1} hold with $\Omega_{\e, \eta}$ in the place of $\omega_{\e, \eta}$.
Moreover, by using the  Poincar\'e inequality \eqref{u-P}, 
 the same argument as in the proof of Theorem \ref{thm-3.0} also gives
\begin{equation}\label{g-3.1-2}
\| u\|_{L^p(\Omega_{\e, \eta})}
\le C \left\{ \| F \|_{L^p(\Omega_{\e, \eta})}
+ \| f\|_{L^p(\Omega_{\e, \eta})} \right\},
\end{equation}
for $d\ge 2$ and $2\le p< \infty$, where $C$ depends on $d$, $p$ and $\Omega$.
By combining \eqref{g-3.1-2} with \eqref{g-3-0} and \eqref{g-3-1},
we obtain \eqref{g-3.1-0} and \eqref{g-3.1-1}.
\end{proof}



\section{Estimates of $u$}\label{section-U}

For $1< p<\infty$ and $\e, \eta\in (0, 1]$, we introduce operator norms,
\begin{equation}\label{A-p}
\aligned
A_p (\e, \eta)
=\sup \Big\{   \| \nabla u\|_{L^p(\omega_{\e, \eta})}:
   &\  u\in W^{1, 2}_0(\omega_{\e, \eta})  \text{ is a weak solution of } -\Delta u=\text{\rm div} (f)  \text{ in } \omega_{\e, \eta} \\
 & \text{ with }
f\in L^p(\omega_{\e, \eta}; \R^d) \cap L^2 (\omega_{\e, \eta}; \R^d) \text{ and } 
\| f\|_{L^p(\omega_{\e, \eta})} =1 \Big\},
\endaligned
\end{equation}
\begin{equation}\label{B-p}
\aligned
B_p (\e, \eta)
=\sup \Big\{   \| \nabla u\|_{L^p(\omega_{\e, \eta})}:
   &\  u\in W_0^{1, 2} (\omega_{\e, \eta})  \text{ is a weak solution of } -\Delta u=F \text{ in } \omega_{\e, \eta} \\
 & \text{ with }
F\in L^p(\omega_{\e, \eta}) \cap L^2 (\omega_{\e, \eta}) \text{ and } 
\| F\|_{L^p(\omega_{\e, \eta})} =1 \Big\},
\endaligned
\end{equation}
\begin{equation}\label{C-p}
\aligned
C_p (\e, \eta)
=\sup \Big\{   \| u\|_{L^p(\omega_{\e, \eta})}:
   & \ u\in W_0^{1, 2} (\omega_{\e, \eta})  \text{ is a weak solution of } -\Delta u=\text{\rm div} (f)  \text{ in } \omega_{\e, \eta} \\
 & \text{ with }
f\in L^p(\omega_{\e, \eta}; \R^d) \cap L^2 (\omega_{\e, \eta}; \R^d) \text{ and } 
\| f\|_{L^p(\omega_{\e, \eta})} =1 \Big\},
\endaligned
\end{equation}
\begin{equation}\label{D-p}
\aligned
D_p (\e, \eta)
=\sup \Big\{   \| u\|_{L^p(\omega_{\e, \eta})}:\ 
   & u\in W_0^{1, 2} (\omega_{\e, \eta})  \text{ is a weak solution of } -\Delta u=F \text{ in } \omega_{\e, \eta} \\
 & \text{ with }
F\in L^p(\omega_{\e, \eta}) \cap L^2 (\omega_{\e, \eta}) \text{ and } 
\| F\|_{L^p(\omega_{\e, \eta})} =1 \Big\}.
\endaligned
\end{equation}

\begin{lemma}\label{lemma-U-1}
Let $A_p(\e, \eta), B_p(\e, \eta)$, $C_p(\e, \eta)$ and $D_p(\e, \eta)$ be defined above.
Then
\begin{equation}
A_p (\e, \eta) = A_p(1, \eta), \quad \text{ \ } \quad
B_p (\e, \eta) = \e B_p (1, \eta),
\end{equation}
\begin{equation}
C_p (\e, \eta) =\e\,  C_p(1, \eta), \quad \text{ \ } \quad
D_p (\e, \eta) = \e^2 D_p (1, \eta).
\end{equation}
\end{lemma}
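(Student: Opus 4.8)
The plan is to prove Lemma \ref{lemma-U-1} by a direct rescaling argument: given a solution $u$ on $\omega_{\e,\eta}$ with datum $f$ or $F$, produce a solution $v$ on $\omega_{1,\eta}$ by setting $v(y)=u(\e y)$ (up to a normalizing factor), and track how the $L^p$ norms of $v$, $\nabla v$ and the new data relate to those of $u$. The key observation is that $\omega_{\e,\eta}=\e\,\omega_{1,\eta}$, so $x\in\omega_{\e,\eta}\iff y=x/\e\in\omega_{1,\eta}$; this makes the change of variables map $W^{1,2}_0(\omega_{\e,\eta})$ bijectively onto $W^{1,2}_0(\omega_{1,\eta})$ and preserves the weak formulation up to explicit powers of $\e$.

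First I would record the basic scaling identities. For a function $u$ on $\omega_{\e,\eta}$, set $v(y)=u(\e y)$. Then $\nabla v(y)=\e(\nabla u)(\e y)$, and the Jacobian gives $\|v\|_{L^p(\omega_{1,\eta})}=\e^{-d/p}\|u\|_{L^p(\omega_{\e,\eta})}$ and $\|\nabla v\|_{L^p(\omega_{1,\eta})}=\e^{1-d/p}\|\nabla u\|_{L^p(\omega_{\e,\eta})}$. Moreover, if $-\Delta u=\text{div}(f)$ in $\omega_{\e,\eta}$, then a computation with the chain rule shows $-\Delta_y v=\text{div}_y(g)$ in $\omega_{1,\eta}$ with $g(y)=\e f(\e y)$, so $\|g\|_{L^p(\omega_{1,\eta})}=\e^{1-d/p}\|f\|_{L^p(\omega_{\e,\eta})}$; similarly if $-\Delta u=F$ then $-\Delta_y v=\e^2 F(\e y)=:\Phi(y)$ with $\|\Phi\|_{L^p(\omega_{1,\eta})}=\e^{2-d/p}\|F\|_{L^p(\omega_{\e,\eta})}$. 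These identities also respect the $L^2$ membership required in the definitions of the operator norms, so the correspondence $u\leftrightarrow v$ is a bijection between the admissible classes (after normalizing the data to have unit $L^p$ norm).

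The four identities then follow by bookkeeping. For $A_p$: the relation $-\Delta u=\text{div}(f)$ corresponds to $-\Delta v=\text{div}(g)$, and $\|\nabla u\|_{L^p(\omega_{\e,\eta})}/\|f\|_{L^p(\omega_{\e,\eta})} = \|\nabla v\|_{L^p(\omega_{1,\eta})}/\|g\|_{L^p(\omega_{1,\eta})}$ since both numerator and denominator pick up the same factor $\e^{1-d/p}$; taking suprema gives $A_p(\e,\eta)=A_p(1,\eta)$. For $B_p$: here $\|\nabla u\|_{L^p}$ scales as $\e^{1-d/p}\|\nabla v\|_{L^p}$ while $\|F\|_{L^p}$ scales as $\e^{2-d/p}\|\Phi\|_{L^p}$, so the ratio acquires an extra factor $\e$, giving $B_p(\e,\eta)=\e\,B_p(1,\eta)$. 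For $C_p$: $\|u\|_{L^p}=\e^{d/p}\|v\|_{L^p}$ against $\|f\|_{L^p}=\e^{d/p-1}\|g\|_{L^p}$, extra factor $\e$, so $C_p(\e,\eta)=\e\,C_p(1,\eta)$. For $D_p$: $\|u\|_{L^p}=\e^{d/p}\|v\|_{L^p}$ against $\|F\|_{L^p}=\e^{d/p-2}\|\Phi\|_{L^p}$, extra factor $\e^2$, so $D_p(\e,\eta)=\e^2 D_p(1,\eta)$.

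There is no serious obstacle here; the only points requiring a little care are (i) verifying that the change of variables genuinely maps $W^{1,2}_0(\omega_{\e,\eta})$ onto $W^{1,2}_0(\omega_{1,\eta})$ — which is immediate from $\omega_{\e,\eta}=\e\,\omega_{1,\eta}$ and the characterization of $W^{1,2}_0(\omega_{\e,\eta})$ as zero-extendable $W^{1,2}(\R^d)$ functions noted just before Theorem \ref{thm-3.0} — and (ii) keeping the exponents of $\e$ straight, which I would do by writing out the two scaling rules $\|u\|_{L^p(\omega_{\e,\eta})}=\e^{d/p}\|v\|_{L^p(\omega_{1,\eta})}$ and $\|\nabla u\|_{L^p(\omega_{\e,\eta})}=\e^{d/p-1}\|\nabla v\|_{L^p(\omega_{1,\eta})}$ once and then reading off each of the four cases. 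The suprema transfer cleanly because the map $u\mapsto v$ is a bijection on the constraint sets once the unit-norm normalization on the data is imposed.
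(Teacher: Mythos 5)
Your proposal is correct and is essentially the same argument as the paper's: the paper proves the lemma in one line by the very rescaling $v(x)=u(\e x)$, noting $-\Delta v(x)=\e^2 F(\e x)+\e\,\mathrm{div}\{f(\e x)\}$ in $\omega_{1,\eta}$, and you have simply written out the Jacobian bookkeeping and the transfer of the suprema in full detail. The exponent computations ($\e^{1-d/p}$ for $\nabla u$ and $f$, $\e^{d/p}$ for $u$, $\e^{2-d/p}$ for $F$) are all correct and yield the four stated identities.
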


\begin{proof}
This follows by a simple rescaling, using the observation that 
if  $-\Delta u=F +\text{\rm div}(f)$ in $\omega_{\e, \eta}$ and
 $v(x)=u(\e x)$, then 
 $$
 -\Delta v (x) = \e^2 F(\e x) + \e \text{\rm div} \{  f(\e x )\}
 $$
for $x \in \omega_{1, \eta}$.
\end{proof}

\begin{lemma}\label{lemma-U-2}
Let $1< p< \infty$.
Then 
$$
A_p(\e, \eta) =A_{p^\prime}(\e, \eta), \quad
B_p(\e, \eta) =C_{p^\prime} (\e, \eta), \quad \text{ and } \quad
D_{p}  (\e, \eta)= D_{p^\prime} (\e, \eta),
$$
where $p^\prime =\frac{p}{p-1}$.
\end{lemma}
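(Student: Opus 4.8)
The statement to prove is the duality identity $A_p(\e,\eta)=A_{p'}(\e,\eta)$, $B_p(\e,\eta)=C_{p'}(\e,\eta)$, $D_p(\e,\eta)=D_{p'}(\e,\eta)$. The plan is to exploit the symmetry of the Laplacian under the bilinear pairing $\int_{\omega_{\e,\eta}} \nabla u\cdot\nabla v\,dx$ together with the standard $L^p$--$L^{p'}$ duality for norms. Because of the scaling Lemma \ref{lemma-U-1}, it suffices to treat $\e=1$; but in fact the argument is scale-independent, so I would just work with general $\e$.

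First I would prove $B_p(\e,\eta)=C_{p'}(\e,\eta)$. Take $F\in L^p\cap L^2$ with $\|F\|_{L^p}=1$ and let $u\in W_0^{1,2}(\omega_{\e,\eta})$ solve $-\Delta u=F$. To estimate $\|\nabla u\|_{L^p}$, pick $g\in L^{p'}(\omega_{\e,\eta};\R^d)$ with $\|g\|_{L^{p'}}=1$ realizing (up to $\varepsilon$) the norm $\|\nabla u\|_{L^p}=\sup_g \int \nabla u\cdot g\,dx$. I would first reduce to $g$ smooth with compact support in $\omega_{\e,\eta}$ by density, and also note $g\in L^{p'}\cap L^2$ since it is bounded with compact support. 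Let $v\in W_0^{1,2}(\omega_{\e,\eta})$ solve $-\Delta v=\operatorname{div}(g)$; then $v$ is a competitor for the supremum defining $C_{p'}(\e,\eta)$, so $\|v\|_{L^{p'}}\le C_{p'}(\e,\eta)$. Now the key computation: using $v$ as test function in the weak formulation of the equation for $u$, and using $u$ as test function in the weak formulation for $v$,
\begin{equation}
\int \nabla u\cdot g\,dx = -\int \nabla u\cdot\nabla v\,dx = -\int F v\,dx,
\end{equation}
where the first equality uses $\int\nabla v\cdot\nabla u\,dx=-\int g\cdot\nabla u\,dx$ (definition of $v$ tested against $u$) and the second uses $\int\nabla u\cdot\nabla v\,dx=\int Fv\,dx$ (definition of $u$ tested against $v$). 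Hence $|\int\nabla u\cdot g\,dx|\le \|F\|_{L^p}\|v\|_{L^{p'}}\le C_{p'}(\e,\eta)$, giving $B_p(\e,\eta)\le C_{p'}(\e,\eta)$. The reverse inequality $C_{p'}(\e,\eta)\le B_p(\e,\eta)$ follows by running the same argument with the roles of $(u,F)$ and $(v,g)$ interchanged: given $v$ solving $-\Delta v=\operatorname{div}(g)$ with $\|g\|_{L^{p'}}=1$, pair it against a solution of $-\Delta u=F$ with $\|F\|_{L^p}=1$ nearly realizing $\|v\|_{L^{p'}}=\sup_F\int vF\,dx$, and use $\|\nabla u\|_{L^p}\le B_p(\e,\eta)$. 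The identities $A_p=A_{p'}$ and $D_p=D_{p'}$ are obtained by exactly the same device: for $A_p=A_{p'}$ pair a solution of $-\Delta u=\operatorname{div}(f)$ against a solution of $-\Delta v=\operatorname{div}(g)$ and estimate $\int\nabla u\cdot g\,dx=-\int f\cdot\nabla v\,dx$; for $D_p=D_{p'}$ pair a solution of $-\Delta u=F$ against a solution of $-\Delta v=G$ and use $\int u G\,dx=\int\nabla u\cdot\nabla v\,dx=\int Fv\,dx$.

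The main technical point to handle carefully — and the only real obstacle — is the density/integrability bookkeeping that makes the pairings legitimate. Two issues arise: (i) the dual norm of $\nabla u$ (resp. $u$) should be computed against test functions lying in the intersection $L^{p'}\cap L^2$ so that the auxiliary problems have $W_0^{1,2}$ solutions to which the energy identities apply; this is fine because bounded compactly supported functions are dense in $L^{p'}(\omega_{\e,\eta})$ in the $L^{p'}$-norm and lie in $L^2$. (ii) One must justify that the weak solutions $u,v\in W_0^{1,2}$, which a priori are only known to exist by Lax–Milgram (as in \eqref{3.1-e}), can legitimately be used as mutual test functions; this is immediate since $u,v\in W_0^{1,2}(\omega_{\e,\eta})$ and the weak formulations hold for all test functions in that space. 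A minor subtlety: the operator norms are defined as suprema over solutions with the source normalized in $L^p$, so when I write $\|\nabla u\|_{L^p}=\sup\{\int\nabla u\cdot g:\|g\|_{L^{p'}}\le1\}$ I should restrict to $g$ in the dense subclass and pass to the limit at the end, which is harmless since the bound $C_{p'}(\e,\eta)$ does not depend on $g$. Once these routine points are in place the three identities follow simultaneously from the one-line symmetry of the Dirichlet form.
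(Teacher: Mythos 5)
Your proposal is correct and takes essentially the same route as the paper, whose proof simply records the bilinear identity $\int_{\omega_{\e,\eta}} u\,G\,dx-\int \nabla u\cdot g\,dx=\int \nabla u\cdot\nabla w\,dx=\int w\,F\,dx-\int \nabla w\cdot f\,dx$ and leaves the $L^p$--$L^{p'}$ norm duality and density bookkeeping (which you spell out) implicit. The only slip is the sign in the $A_p$ case, where the pairing gives $\int\nabla u\cdot g\,dx=+\int f\cdot\nabla v\,dx$, but this is immaterial since only absolute values enter the estimate.
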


\begin{proof}
This follows by a duality argument. Indeed, 
let $u, w\in W_0^{1, 2} (\omega_{\e, \eta})$ be weak solutions of
$-\Delta u=F+ \text{\rm div}(f)$ in $\omega_{\e, \eta}$ and
$-\Delta w=G+\text{\rm div}(g)$ in $\omega_{\e, \eta}$, respectively,
where $f\in L^p(\omega_{\e, \eta}; \R^d)\cap L^2(\omega_{\e, \eta}; \R^d)$,
$g\in L^{p^\prime} (\omega_{\e, \eta}; \R^d)\cap L^2(\omega_{\e, \eta}; \R^d)$,
$F \in L^p(\omega_{\e, \eta})\cap L^2(\omega_{\e, \eta})$ and
$G \in L^{p^\prime}(\omega_{\e, \eta})\cap L^2(\omega_{\e, \eta})$.
Then
\begin{equation}
\aligned
\int_{\omega_{\e, \eta}} u\,  G\, dx-\int_{\omega_\e, \eta} \nabla u\cdot g \, dx
& =\int_{\omega_{\e, \eta}} \nabla u \cdot \nabla w\, dx\\
&=\int_{\omega_{\e, \eta}} w\,  F \, dx -\int_{\omega_\e, \eta} \nabla w \cdot f\, dx.
\endaligned
\end{equation}
\end{proof}

\begin{thm}\label{thm-D}
Let $C_p(\e, \eta)$ and $D_p (\e, \eta)$ be defined by \eqref{C-p} and  \eqref{D-p}, respectively. 

\begin{enumerate}

\item

For $2\le p< \infty$ and $\e, \eta\in (0, 1]$,
\begin{equation}\label{C-p-1}
C_p (\e, \eta)
\le 
\left\{
\aligned
& C \, \e  \eta^{\frac{2-d}{2}} & \quad & \text{ for } d\ge 3,\\
& C \, \e |\ln (\eta/2)|^{\frac12} & \quad & \text{ for } d=2.
\endaligned
\right.
\end{equation}

\item

For $1< p< \infty$ and $\e, \eta\in (0, 1]$,
\begin{equation}\label{D-p-1}
D_p (\e, \eta) \le 
\left\{
\aligned
& C \, \e^2  \eta^{2-d} & \quad & \text{ for } d\ge 3,\\
& C \, \e^2 |\ln (\eta/2)| & \quad & \text{ for } d=2.
\endaligned
\right.
\end{equation}
\end{enumerate}
The constants  $C$ depend only on $d$, $p$ and $c_0$ in \eqref{condition-0}.
\end{thm}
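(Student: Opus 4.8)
The plan is to obtain both statements as immediate consequences of Theorem~\ref{thm-3.0}, together with the duality identity of Lemma~\ref{lemma-U-2} to reach the range $1<p<2$ in Part (2); no new estimate is needed, only a careful matching of hypotheses.

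For Part (1), fix $2\le p<\infty$ and $d\ge 3$, and take any $f\in L^p(\omega_{\e,\eta};\R^d)\cap L^2(\omega_{\e,\eta};\R^d)$ with $\|f\|_{L^p(\omega_{\e,\eta})}=1$. Let $u\in W_0^{1,2}(\omega_{\e,\eta})$ be the weak solution of $-\Delta u=\text{\rm div}(f)$ in $\omega_{\e,\eta}$, which is \eqref{D-3-0} with $F=0$. Applying Theorem~\ref{thm-3.0} with $F=0$ gives
\[
\|u\|_{L^p(\omega_{\e,\eta})}\le C\,\e\,\eta^{\frac{2-d}{2}}\|f\|_{L^p(\omega_{\e,\eta})}=C\,\e\,\eta^{\frac{2-d}{2}},
\]
and taking the supremum over all admissible $f$ yields the bound \eqref{C-p-1} for $C_p(\e,\eta)$. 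For $d=2$ I would argue identically, invoking \eqref{g-3-1} in place of \eqref{g-3-0}.

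For Part (2), I first treat $2\le p<\infty$: for any $F\in L^p(\omega_{\e,\eta})\cap L^2(\omega_{\e,\eta})$ with $\|F\|_{L^p(\omega_{\e,\eta})}=1$, the solution $u$ of $-\Delta u=F$ in $\omega_{\e,\eta}$ satisfies, by Theorem~\ref{thm-3.0} with $f=0$ (and \eqref{g-3-1} when $d=2$), the bound $\|u\|_{L^p(\omega_{\e,\eta})}\le C\,\e^2\eta^{2-d}$, so $D_p(\e,\eta)$ obeys \eqref{D-p-1} for $2\le p<\infty$. To reach $1<p<2$, I would invoke Lemma~\ref{lemma-U-2}, which yields $D_p(\e,\eta)=D_{p'}(\e,\eta)$ with $p'=\frac{p}{p-1}\in(2,\infty)$; applying the case already proved to the exponent $p'$ then gives \eqref{D-p-1} for $1<p<2$ as well, so the estimate holds for all $1<p<\infty$.

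Since this argument only repackages Theorem~\ref{thm-3.0}, I do not expect any serious obstacle. The only points deserving a line of care are that the integrability class $L^p\cap L^2$ appearing in the definitions \eqref{C-p}--\eqref{D-p} of $C_p$ and $D_p$ is exactly the hypothesis class of Theorem~\ref{thm-3.0}, so its conclusion applies verbatim, and that the duality pairing used in Lemma~\ref{lemma-U-2} is legitimate here: both $u$ and the auxiliary solution $w$ lie in $W_0^{1,2}(\omega_{\e,\eta})$, so $\int_{\omega_{\e,\eta}}\nabla u\cdot\nabla w\,dx$ is well defined and equals both $\int_{\omega_{\e,\eta}} wF\,dx$ and $\int_{\omega_{\e,\eta}} uG\,dx$. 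The stated dependence of $C$ on $d$, $p$, and $c_0$ is inherited directly from Theorem~\ref{thm-3.0}.
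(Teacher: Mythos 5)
Your proposal is correct and is essentially the paper's own argument: Theorem \ref{thm-3.0} (with $F=0$ or $f=0$, and \eqref{g-3-1} when $d=2$) gives the cases $2\le p<\infty$ for both $C_p$ and $D_p$, and the duality identity $D_p(\e,\eta)=D_{p^\prime}(\e,\eta)$ from Lemma \ref{lemma-U-2} handles $1<p<2$ for $D_p$. Your added remarks on the admissible class $L^p\cap L^2$ and the legitimacy of the duality pairing are fine and consistent with the paper.
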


\begin{proof}
The estimates for $2\le p< \infty$ in \eqref{C-p-1} and \eqref{D-p-1} are given by Theorem \ref{thm-3.0}.
The case $1< p<2$  for $D_p(\e, \eta)$ follows from the case $2<p<\infty$ by Lemma \ref{lemma-U-2}.
\end{proof}

In the remaining of this section  we shall show that  the estimates in Theorem \ref{thm-D}  are sharp.
To this end, we consider  a periodically perforated domain,
\begin{equation}\label{p-domain}
\omega_{\e, \eta} 
=\R^d \setminus \bigcup_{k \in \mathbb{Z}^d} \e (k + \eta T),
\end{equation}
where $T$ is the closure of a bounded Lipschitz subdomain of $Y$.
We assume that  $Y\setminus T$ is connected, $B(0, c_0) \subset T$, and that 
dist$(\partial T, \partial Y) \ge c_0>0$.
Let $\chi_\eta$ be a $Y$-periodic function in $H^1_{loc}(\R^d)$ such that
\begin{equation}\label{chi-1}
-\Delta \chi_\eta =\eta^{d-2} \quad \text{ in } \omega_{1, \eta}
\quad \text{ and }
\quad
\chi_\eta =0 \quad \text{ on } \R^d \setminus \omega_{1, \eta},
\end{equation}
where $\eta \in (0, 1]$.
The existence and uniqueness of $\chi_\eta$ may be proved by using the Lax-Milgram Theorem on the Hilbert space
$\{ u\in H^1_{per} (Y): u=0 \text{ in } \eta T \}$, where $H^1_{per}(Y)$ denotes the closure of the set of
smooth $Y$-periodic functions in $H^1(Y)$.

\begin{lemma}\label{lemma-chi-1}
Let $\chi_\eta$ be the $Y$-periodic function defined by \eqref{chi-1}.
Then, if $d\ge 3$,
\begin{equation}\label{chi-2}
 \int_Y \chi_\eta \, dx  \approx 1 
 \quad \text{ and } \quad
 \left(\int_Y |\nabla \chi_\eta|^2 \,dx \right)^{1/2} \approx  \eta^{\frac{d-2}{2}}.
\end{equation}
If $d=2$, we have
\begin{equation}\label{chi-3}
 \int_Y \chi_\eta \, dx  \approx |\ln (\eta/2)|
 \quad \text{ and } \quad
 \left(\int_Y |\nabla \chi_\eta|^2 \,dx \right)^{1/2} \approx  |\ln (\eta/2)|^{\frac12}.
\end{equation}
\end{lemma}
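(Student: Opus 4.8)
The plan is to estimate $\chi_\eta$ directly via the energy identity and an auxiliary comparison function. First I would test the equation \eqref{chi-1} against $\chi_\eta$ itself over one period cell $Y$ (using $Y$-periodicity and the vanishing of $\chi_\eta$ on $\eta T$), which gives
\begin{equation}\label{prop-energy}
\int_Y |\nabla \chi_\eta|^2\, dx = \eta^{d-2} \int_Y \chi_\eta\, dx.
\end{equation}
So the two quantities in \eqref{chi-2} (resp.\ \eqref{chi-3}) are not independent; it suffices to pin down $\int_Y \chi_\eta\, dx$ up to constants, and then \eqref{prop-energy} yields the gradient bound. By the maximum principle $\chi_\eta \ge 0$, and since $\chi_\eta$ vanishes on $B(0,\eta) \subset \eta T$, the Poincaré inequality \eqref{P} applied on $Y$ gives $\int_Y \chi_\eta^2 \le C\eta^{2-d}\int_Y |\nabla\chi_\eta|^2 = C\int_Y\chi_\eta$ for $d\ge 3$ (and the analogous logarithmic version for $d=2$), so by Cauchy–Schwarz $\int_Y \chi_\eta \le C$ for $d\ge 3$ and $\le C|\ln(\eta/2)|$ for $d=2$. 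This already gives the upper bounds in \eqref{chi-2}–\eqref{chi-3}.

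For the matching lower bounds I would construct an explicit competitor and use the variational characterization: $\chi_\eta$ minimizes $J(v) = \int_Y |\nabla v|^2 - 2\eta^{d-2}\int_Y v$ over $Y$-periodic $v$ vanishing on $\eta T$, and $J(\chi_\eta) = -\eta^{d-2}\int_Y \chi_\eta$. Thus for any admissible competitor $v$,
\begin{equation}\label{prop-comp}
\eta^{d-2}\int_Y \chi_\eta\, dx = -J(\chi_\eta) \ge -J(v) = 2\eta^{d-2}\int_Y v\, dx - \int_Y |\nabla v|^2\, dx.
\end{equation}
Taking $v$ to be (a $Y$-periodized, cut-off version of) the capacitary-type function that equals a constant $\sim 1$ away from the hole and interpolates from $0$ on $\partial(\eta T)$ — concretely, for $d\ge 3$ something comparable to $\min\{1,\ (|x|/\eta)^{2-d}\cdot(\text{const})\}$-style profile, i.e.\ harmonic between $\partial(\eta T)$ and a fixed sphere, extended by the constant — one computes $\int_Y |\nabla v|^2 \le C\eta^{d-2}$ while $\int_Y v \ge c > 0$. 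Plugging into \eqref{prop-comp} gives $\int_Y \chi_\eta \ge 2c - C \ge c'$ once the competitor's constant is calibrated, hence $\int_Y \chi_\eta \approx 1$. For $d=2$ one uses instead the logarithmic capacitary profile (harmonic, $\approx \log(|x|/\eta)/\log(1/\eta)$ scaled to be $\sim|\ln(\eta/2)|$ in the bulk), giving $\int_Y|\nabla v|^2 \le C/|\ln(\eta/2)|$ — wait, more carefully, scaled so $v \sim |\ln(\eta/2)|$ one gets $\int_Y|\nabla v|^2 \le C|\ln(\eta/2)|$ and $\int_Y v \ge c|\ln(\eta/2)|$, so \eqref{prop-comp} yields $\int_Y \chi_\eta \ge c|\ln(\eta/2)|$. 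Once $\int_Y \chi_\eta$ is known up to constants in each case, \eqref{prop-energy} immediately delivers the gradient asymptotics.

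The main obstacle is the construction and estimation of the competitor $v$: it must be $Y$-periodic, vanish exactly on $\eta T$ (not just on $B(0,\eta)$), have controlled Dirichlet energy, and have bulk average bounded below — and the calibration of its overall scale must beat the energy term in \eqref{prop-comp}. The cleanest route is to work on an intermediate annular region, say between $\partial(\eta T)$ and $\partial B(0, c_0/2)$ (using $\text{dist}(\partial T,\partial Y)\ge c_0$ so this sits inside $Y$), solve the Dirichlet problem there with boundary data $0$ on $\partial(\eta T)$ and a suitable constant on $\partial B(0,c_0/2)$, and extend by that constant to the rest of $Y$; a standard barrier/capacity computation on that annulus gives exactly the energy bound $\lesssim \eta^{d-2}$ (resp.\ $\lesssim |\ln(\eta/2)|$ after scaling) with the Lipschitz character of $T$ entering only through the comparison constants. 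A minor point to handle is that $\eta T$ rather than a ball is removed, but since $B(0,c_0)\subset T \subset Y$ a two-sided comparison with balls $B(0,c_0\eta)$ and $B(0,\eta/2)$ absorbs this into constants depending on $T$ and $c_0$.
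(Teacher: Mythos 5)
Your proposal is correct and follows essentially the same route as the paper: the identity $\int_Y|\nabla\chi_\eta|^2\,dx=\eta^{d-2}\int_Y\chi_\eta\,dx$ combined with the Poincar\'e inequality \eqref{P} for the upper bounds, and a capacitary cut-off competitor (flat $0/1$ cut-off for $d\ge 3$, logarithmic profile for $d=2$) for the lower bounds. Your variational inequality \eqref{prop-comp} with an optimally scaled competitor $v=\lambda w$ is equivalent to the paper's step of pairing the weak formulation with the test function and using Cauchy--Schwarz, and at the optimal $\lambda$ the bound becomes $\bigl(\int_Y w\bigr)^2/\int_Y|\nabla w|^2>0$, so the calibration concern you raise resolves itself.
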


\begin{proof}
Suppose $d\ge 3$.
Using
\begin{equation}\label{chi-4}
\int_Y |\nabla \chi_\eta|^2\, dx
=\eta^{d-2} \int_Y \chi_\eta\, dx,
\end{equation}
we obtain
$$
\int_Y |\nabla \chi_\eta|^2\, dx
\le \eta^{d-2}  \left(\int_Y |\chi_\eta|^2 \, dx \right)^{1/2}
\le C \eta^{\frac{d-2}{2}} \left(\int_Y |\nabla \chi_\eta|^2\, dx \right)^{1/2},
$$
where we have used Poincar\'e inequality \eqref{P} for $p=2$.
It follows that
$$
\left(\int_Y |\nabla \chi_\eta|^2\, dx \right)^{1/2}
\le C \eta^{\frac{d-2}{2}}
\quad \text{ and } \quad
 \int_Y \chi_\eta\, dx\le C,
$$
where $C$ depends only on $d$ and $c_0$.
To prove the reverse inequalities, we construct a function $\psi\in H^1_{per}(Y)$ such that $0\le \psi \le1$ in $Y$,
$\psi=1$ in $Y\setminus C \eta T$, 
$\psi=0$ on $\eta T$, and $|\nabla \psi |\le C \eta^{-1}$.
Note that
$$
\int_Y \psi\, dx \approx 1 \quad \text{ and } \quad
\left(\int_Y |\nabla \psi|^2\, dx \right)^{1/2} \le C \eta^{\frac{d-2}{2}}.
$$
This yields
$$
\aligned
\eta^{d-2}
\int_Y \psi\, dx
 & =\int_Y \nabla \chi_\eta \cdot \nabla \psi\, dx\\
&\le C  \eta^{\frac{d-2}{2}}\| \nabla \chi_\eta\|_{L^2(Y)}.
\endaligned
$$
Hence, $ \eta^{\frac{d-2}{2}} \le  C\| \nabla \chi_\eta \|_{L^2(Y)}$, which, together with \eqref{chi-4}, also gives
$\int_Y \chi_\eta\, dx  \ge c>0$.

Consider the case $d=2$.
The proof for the upper bounds is the same.
However, the same choice of $\psi$ only produces lower bounds for $\int_Y \chi_\eta \, dx$ and
$\|\nabla \chi_\eta  \|_{L^2(Y)}$ by a positive constant $c$.
For optimal lower bounds, we may assume  that $\eta$ is sufficiently small and 
$\eta T \subset B(0, C_0 \eta) \subset B(0, 1/4)$.
Let $\psi $ be the function in $H^1_{per}(Y)$ such that 
\begin{equation}\label{chi-5}
\left\{
\aligned 
 \psi  & =1-\frac{\ln (1/2)}{\ln (C_0\eta)} & \quad & \text{ in }  Y\setminus B(0, 1/2),\\
 \psi (x)  & = 1-\frac{\ln |x|}{\ln (C_0\eta)}  & \quad & \text{ for  } x\in B(0, 1/2) \setminus B(0, C_0\eta),\\
 \psi & =0 & \quad & \text{  in } B(0, C_0\eta).\\
 \endaligned
 \right.
 \end{equation}
 A direct computation shows that
$
\int_Y \psi\, dx  \ge c>0,
$
and
$$
\int_Y |\nabla \psi |^2\, dx= |\ln (C_0\eta)|^{-2}
 \int_{B(0, 1/2)\setminus B(0, C_0\eta)} |x|^{-2} \, dx
 \le C |\ln \eta|^{-1}.
$$
It follows that
$$
c\le \int_Y \psi\, dx
=\int_Y \nabla \chi_\eta \cdot \nabla \psi\, dx
\le C |\ln \eta|^{-\frac12}  \|\nabla \chi_\eta\|_{L^2(Y)}.
$$
As a result, we have proved that  $\|\nabla \chi_\eta\|_{L^2(Y)} \ge c |\ln (\eta/2) |^{1/2}$.
By \eqref{chi-4}, this yields  $ \int_Y \chi_\eta \, dx \ge c |\ln (\eta/2) |$.
\end{proof}

The next theorem shows that the estimate \eqref{D-p-1} for $D_p (\e, \eta)$ is sharp for $d\ge 2$ and $1< p< \infty$.

\begin{thm}\label{thm-Dp}
Let $D_p (\e, \eta)$ be defined by \eqref{D-p} for the periodically perforated domain $\omega_{\e, \eta}$ in \eqref{p-domain}.
Then, for $d\ge 3$,
\begin{equation}\label{D-p-low}
 c\,  \e^2 \eta^{2-d} \le D_p (\e, \eta) \le C \e^2 \eta^{2-d},
\end{equation}
and for $d=2$,
\begin{equation}\label{D-p-low2}
 c\,  \e^2  |\ln (\eta/2)|  \le D_p (\e, \eta) \le C \e^2  |\ln (\eta/2)|,
\end{equation}
where $1<p<\infty$ and $C, c>0$ depend only on $d$, $p$ and $c_0$.
\end{thm}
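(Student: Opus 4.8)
\smallskip
\noindent\emph{Proof strategy.} The upper bounds in \eqref{D-p-low}--\eqref{D-p-low2} are exactly the content of Theorem \ref{thm-D}, so only the lower bounds need to be argued, and by the scaling identity $D_p(\e,\eta)=\e^2 D_p(1,\eta)$ of Lemma \ref{lemma-U-1} it suffices to show $D_p(1,\eta)\ge c\,\eta^{2-d}$ when $d\ge3$ and $D_p(1,\eta)\ge c\,|\ln(\eta/2)|$ when $d=2$. The plan is to feed a localized version of the periodic corrector $\chi_\eta$ from \eqref{chi-1} into the definition \eqref{D-p}: on a large cube $\chi_\eta$ has $L^p$ norm of order $(\#\text{cells})^{1/p}$, while the right-hand side it produces, after a careful localization, is dominated by $\eta^{d-2}$ (resp.\ $1$) times the same factor.

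Fix a half-integer $R$ and set $Q_R=[-R,R]^d$, a union of $\sim R^d$ translated cells $k+Y$ whose faces lie along cell walls. Since each hole $k+\eta T$ is strictly interior to its cell --- one checks $\operatorname{dist}(\eta T,\partial Y)\ge c_1$ for some $c_1=c_1(c_0)>0$ and all $\eta\in(0,1]$ --- we may choose $\varphi_R\in C_0^\infty(\R^d)$ with $0\le\varphi_R\le1$, $\varphi_R\equiv1$ on $Q_R$, $\operatorname{supp}\varphi_R\subset Q_{R+1}$, $|\nabla\varphi_R|+|\nabla^2\varphi_R|\le C(c_0)$, and with $\nabla\varphi_R$ supported in a shell $S_R$ of width $\lesssim 1$ around $\partial Q_R$ that stays at distance $\ge c_1/2$ from every hole; on $S_R$ the function $\chi_\eta$ solves $-\Delta\chi_\eta=\eta^{d-2}$ with no perforation nearby, hence is smooth there. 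Put $u=\chi_\eta\varphi_R$. Then $u\in W^{1,2}(\R^d)$ and $u=0$ on $\R^d\setminus\omega_{1,\eta}$, so $u\in W^{1,2}_0(\omega_{1,\eta})$, and
\begin{equation*}
-\Delta u=\eta^{d-2}\varphi_R-2\,\nabla\varphi_R\cdot\nabla\chi_\eta-\chi_\eta\,\Delta\varphi_R=:F\qquad\text{in }\omega_{1,\eta},
\end{equation*}
with $F\in L^2(\omega_{1,\eta})\cap L^p(\omega_{1,\eta})$ because the last two terms are bounded and supported in $S_R$. Hence $\|u\|_{L^p(\omega_{1,\eta})}\le D_p(1,\eta)\,\|F\|_{L^p(\omega_{1,\eta})}$.

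It then remains to let $R\to\infty$ with $\eta$ fixed. Since $u=\chi_\eta$ on $Q_R$ and $\int_Y\chi_\eta\,dx\approx1$ (resp.\ $\approx|\ln(\eta/2)|$) by Lemma \ref{lemma-chi-1}, H\"older's inequality gives $\|\chi_\eta\|_{L^p(Y\cap\omega_{1,\eta})}\ge c$ (resp.\ $\ge c|\ln(\eta/2)|$) on a single cell, and summing over the cells in $Q_R$ yields $\|u\|_{L^p(\omega_{1,\eta})}\ge c\,R^{d/p}$ (resp.\ $c\,|\ln(\eta/2)|\,R^{d/p}$). On the other hand $\|\eta^{d-2}\varphi_R\|_{L^p(\omega_{1,\eta})}\le C\,\eta^{d-2}R^{d/p}$ (resp.\ $\le C\,R^{d/p}$), while the two commutator terms are supported in $S_R$, whose volume is $\lesssim R^{d-1}$, and there $|\chi_\eta|$ and $|\nabla\chi_\eta|$ are bounded --- by interior elliptic estimates together with the bound $\|\chi_\eta\|_{L^2(Y)}\le C$ (resp.\ $\le C|\ln(\eta/2)|$), which is immediate from \eqref{P} and the identity \eqref{chi-4} --- so these terms contribute only $O(R^{(d-1)/p})=o(R^{d/p})$. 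Dividing the two norms and letting $R\to\infty$ gives $D_p(1,\eta)\ge c\,\eta^{2-d}$ for $d\ge3$ and $D_p(1,\eta)\ge c\,|\ln(\eta/2)|$ for $d=2$, with $c$ depending only on $d$, $p$ and $c_0$; the case $d=2$ runs verbatim using the $d=2$ parts of Lemmas \ref{p-lemma} and \ref{lemma-chi-1}. The only point demanding care --- and the reason the cutoff is built as above --- is keeping all constants independent of $T$ although $T$ is merely Lipschitz: routing $\nabla\varphi_R$ through the hole-free shell $S_R$ means we never invoke boundary regularity of $\chi_\eta$ near $\partial(\eta T)$, and taking $S_R$ thin ($|S_R|\sim R^{d-1}$) makes the commutator terms genuinely lower order, so that whatever (possibly $\eta$-dependent) bounds one has for $\chi_\eta$ and $\nabla\chi_\eta$ on $S_R$ become irrelevant in the limit $R\to\infty$.
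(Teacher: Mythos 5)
Your argument is correct, and its core is the same as the paper's: both take the periodic corrector $\chi_\eta$ of \eqref{chi-1}, multiply by a large cutoff, use the resulting right-hand side in the definition \eqref{D-p}, compare $\|u\|_{L^p}$ with $\|F\|_{L^p}$ via Lemma \ref{lemma-chi-1}, and let the cutoff radius tend to infinity with $\eta$ fixed (upper bounds from Theorem \ref{thm-D}, scaling from Lemma \ref{lemma-U-1}). The one genuine difference is the cutoff: the paper uses a ball cutoff with $|\nabla\varphi|\lesssim R^{-1}$ whose gradient sweeps through perforated cells, so the commutator term involves $\|\nabla\chi_\eta\|_{L^p(Y)}$ and the proof is first reduced to $1<p\le 2$ by the duality $D_p=D_{p'}$ (Lemma \ref{lemma-U-2}), whereas you route $\nabla\varphi_R$ through a thin hole-free shell with $O(1)$ gradient, so the commutator terms are $O_\eta(R^{(d-1)/p})=o(R^{d/p})$ and only interior smoothness of $\chi_\eta$ is used; this handles all $1<p<\infty$ directly and sidesteps any question of $L^p$ integrability of $\nabla\chi_\eta$ near the merely Lipschitz boundary $\partial(\eta T)$ when $p>2$, at the modest price of constructing the cell-aligned, hole-avoiding cutoff (which your hypotheses $B(0,c_0)\subset T_k$ and $\mathrm{dist}(\partial T,\partial Y)\ge c_0$ indeed permit).
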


\begin{proof}
The upper bounds are given by Theorem \ref{thm-D}.
In view of Lemmas \ref{lemma-U-1} and \ref{lemma-U-2},
to prove the lower bounds, we may assume $\e=1$ and $1< p\le 2$.
Fix $R>10$.
Let $\varphi \in C_0^\infty(\R^d) $ such that
$0\le \varphi \le 1$ in $\R^d$,
$\varphi=1$ in $B(0, R)$, $\varphi=0$  outside of $B(0, 2R)$, and
$|\nabla \varphi |\le CR^{-1}$, $|\nabla^2 \varphi |\le C R^{-2}$.
Using
\begin{equation}\label{eq-00}
-\Delta (\chi_\eta \varphi)
=\eta^{d-2} \varphi
-2 \nabla \chi_\eta \cdot \nabla \varphi
- \chi_\eta \Delta \varphi 
\end{equation}
in $\omega_{1, \eta}$ and $\chi_\eta \varphi=0$ on $\partial \omega_{1, \eta}$, we obtain 
$$
\aligned
c R^{\frac{d}{p}} \| \chi_\eta\|_{L^p(Y)}
&\le 
\|\chi_\eta \varphi \|_{L^p(\omega_{1, \eta})}\\
& \le D_p (1, \eta)
\left\{ 
C \eta^{d-2} R^{\frac{d}{p} }
+ C R^{-1+\frac{d}{p}} \|\nabla \chi_\eta \|_{L^p(Y)}
+ C R^{-2 +\frac{d}{p}} \|\chi_\eta\|_{L^p(Y)} \right\},
\endaligned
$$
where we have used the periodicity of $\chi_\eta$.
It follows that
\begin{equation}\label{chi-55}
 \| \chi_\eta\|_{L^1(Y)}\le D_p (1,  \eta)
\left\{
C \eta^{d-2}
+ C R^{-1} \| \nabla \chi_\eta \|_{L^p(Y)}
+ C R^{-2}  \| \chi_\eta \|_{L^p(Y)} \right\}
\end{equation}
for any $R>10$, where we have used the fact that  $\| \chi_\eta\|_{L^p(Y)} \ge \| \chi_\eta\|_{L^1(Y)}$.
By letting $R \to \infty$ in \eqref{chi-55}, we see that 
$$
D_p(1, \eta) \ge c\,  \eta^{2-d} \| \chi_\eta\|_{L^1(Y)}.
$$
In view of Lemma \ref{lemma-chi-1}, this gives the lower bounds in \eqref{D-p-low} and
\eqref{D-p-low2} for $\e=1$.
\end{proof}

The estimate \eqref{C-p-1} for $C_p(\e, \eta)$  is also sharp.

\begin{thm}\label{thm-C-p-1}
Let $C_p (\e, \eta)$ be defined by \eqref{C-p} for the periodically perforated domain $\omega_{\e, \eta}$ in \eqref{p-domain}.
Then, for $d\ge 3$,
\begin{equation}\label{C-p-low}
c\,  \e \eta^{\frac{2-d}{2}}
\le C_p (\e, \eta) \le C\,  \e \eta^{\frac{2-d}{2}},
\end{equation}
and for $d=2$,
\begin{equation}\label{C-p-low2}
c \, \e |\ln (\eta/2)|^{\frac12}
\le C_p (\e, \eta)
\le C\,  \e |\ln (\eta/2)|^{\frac12},
\end{equation}
where $2\le p< \infty$ and $C, c$ depend only on $d$, $p$ and $T$.
\end{thm}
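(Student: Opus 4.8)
The upper bounds in \eqref{C-p-low}--\eqref{C-p-low2} are exactly the content of Theorem~\ref{thm-D}, so only the lower bounds need to be proved, and by Lemma~\ref{lemma-U-1} it suffices to treat $\e=1$. I would begin with the case $p=2$, running the argument of Theorem~\ref{thm-Dp} but testing the definition \eqref{C-p} of $C_2(1,\eta)$ instead of \eqref{D-p}. Let $\varphi\in C_0^\infty(\R^d)$ be the cut-off used there ($0\le\varphi\le1$, $\varphi=1$ on $B(0,R)$, $\supp\varphi\subset B(0,2R)$, $|\nabla\varphi|\le CR^{-1}$), and set $u=\chi_\eta\varphi$, $f=-\nabla(\chi_\eta\varphi)$. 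Then $-\Delta u=\mathrm{div}(f)$ in $\omega_{1,\eta}$, and since $\chi_\eta$ is $Y$-periodic and vanishes on the holes, $u\in W^{1,2}_0(\omega_{1,\eta})$ and $f\in L^2(\omega_{1,\eta};\R^d)$; hence $C_2(1,\eta)\ge\|u\|_{L^2(\omega_{1,\eta})}/\|f\|_{L^2(\omega_{1,\eta})}$. Using the periodicity of $\chi_\eta$ to count cells in $B(0,R)$ one gets $\|u\|_{L^2}\ge cR^{d/2}\|\chi_\eta\|_{L^2(Y)}\ge cR^{d/2}\|\chi_\eta\|_{L^1(Y)}$, while $\|f\|_{L^2}\le\|\varphi\nabla\chi_\eta\|_{L^2}+\|\chi_\eta\nabla\varphi\|_{L^2}\le CR^{d/2}\|\nabla\chi_\eta\|_{L^2(Y)}+CR^{d/2-1}\|\chi_\eta\|_{L^2(Y)}$. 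Dividing by $R^{d/2}$ and letting $R\to\infty$ yields
$$
C_2(1,\eta)\ \ge\ c\,\frac{\|\chi_\eta\|_{L^1(Y)}}{\|\nabla\chi_\eta\|_{L^2(Y)}} ,
$$
and Lemma~\ref{lemma-chi-1} turns the right-hand side into $c\,\eta^{(2-d)/2}$ for $d\ge3$ (by \eqref{chi-2}) and into $c\,|\ln(\eta/2)|\,/\,|\ln(\eta/2)|^{1/2}=c\,|\ln(\eta/2)|^{1/2}$ for $d=2$ (by \eqref{chi-3}). Rescaling back with Lemma~\ref{lemma-U-1} gives the claimed lower bound for $C_2(\e,\eta)$.

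For $2<p<\infty$ the same pair $(\chi_\eta\varphi,-\nabla(\chi_\eta\varphi))$ is still admissible in \eqref{C-p}, but it only produces $C_p(1,\eta)\gtrsim\|\chi_\eta\|_{L^1(Y)}/\|\nabla\chi_\eta\|_{L^p(Y)}$, which is a strictly weaker power of $\eta$ than $\eta^{(2-d)/2}$ because $\nabla\chi_\eta$ carries a boundary layer of amplitude $\sim\eta^{-1}$ against each hole, so $\|\nabla\chi_\eta\|_{L^p(Y)}$ grows much faster than $\|\nabla\chi_\eta\|_{L^2(Y)}$ as $p$ increases. Instead I would pass to the dual formulation: by Lemma~\ref{lemma-U-2}, $C_p(1,\eta)=B_{p'}(1,\eta)$ with $p'=p/(p-1)\in(1,2)$, so it is enough to bound $B_{p'}(1,\eta)$ from below, i.e.\ to exhibit $F$ and a solution $u\in W^{1,2}_0(\omega_{1,\eta})$ of $-\Delta u=F$ with $\|\nabla u\|_{L^{p'}}/\|F\|_{L^{p'}}\ge c\,\eta^{(2-d)/2}$ (resp.\ $c\,|\ln(\eta/2)|^{1/2}$). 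Testing $B_{p'}$ with $(\chi_\eta\varphi,-\Delta(\chi_\eta\varphi))$, expanding $-\Delta(\chi_\eta\varphi)=\eta^{d-2}\varphi-2\nabla\chi_\eta\cdot\nabla\varphi-\chi_\eta\Delta\varphi$ as in \eqref{eq-00}, and again letting $R\to\infty$, one reaches $B_{p'}(1,\eta)\ge c\,\eta^{2-d}\|\nabla\chi_\eta\|_{L^{p'}(Y)}$ for $d\ge3$ (and the analogue with $|\ln(\eta/2)|^{-1}$ for $d=2$, via \eqref{chi-3} and \eqref{chi-4}).

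The crux — and where I expect the real difficulty of this theorem to lie — is therefore the lower bound $\|\nabla\chi_\eta\|_{L^{p'}(Y)}\ge c\,\eta^{(d-2)/2}$ for $1<p'<2$: one must show that, despite the $\eta^{-1}$ boundary layer, the $L^{p'}(Y)$-norm of $\nabla\chi_\eta$ is still comparable to its $L^2(Y)$-norm (a self-improvement/reverse-Hölder phenomenon for the periodic corrector $\chi_\eta$ on the cell), or else replace the crude test function $\chi_\eta\varphi$ by one with the singular layers near the holes cut out and redistributed without destroying the $L^{p'}$-mass of the gradient. Either route is delicate; a clean way to organize it is to combine the $L^2$ information of Lemma~\ref{lemma-chi-1} with a scale-by-scale estimate of $\chi_\eta$ on the dyadic annuli $2^{-j-1}\le|x|\le2^{-j}$ around a hole, summing their contributions to $\|\nabla\chi_\eta\|_{L^{p'}(Y)}$; alternatively one appeals to the large-scale Lipschitz estimate in the periodic setting alluded to in the introduction. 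The case $d=2$ proceeds in complete parallel, with \eqref{chi-3} in place of \eqref{chi-2} throughout.
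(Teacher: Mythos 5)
Your treatment of the upper bounds and of the case $p=2$ is fine: testing the quotient $\|u\|_{L^2}/\|\nabla u\|_{L^2}$ directly with $u=\chi_\eta\varphi$, $f=-\nabla u$ is legitimate (any such pair is admissible in \eqref{C-p}), and it recovers the sharp lower bound for $C_2$ in both $d\ge 3$ and $d=2$; this is a slightly more direct route than the paper's, which runs everything through the dual quantity $B_{p'}$.

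For the main case $2<p<\infty$, however, there is a genuine gap, and it is not merely an unproved step: the inequality you isolate as the crux, $\|\nabla\chi_\eta\|_{L^{p'}(Y)}\ge c\,\eta^{(d-2)/2}$ for $1<p'<2$, is false. The gradient of $\chi_\eta$ concentrates in an $O(\eta)$-layer around the hole, where it behaves like $\eta^{d-2}|x|^{1-d}$ (capacity profile); integrating, one finds $\|\nabla\chi_\eta\|_{L^{p'}(Y)}\approx \eta^{\frac{d}{p'}-1}$ for $\frac{d}{d-1}<p'\le 2$, and since $\frac{d}{p'}-1>\frac{d-2}{2}$ when $p'<2$, this is of strictly smaller order than $\eta^{(d-2)/2}$; similarly in $d=2$ one gets $\|\nabla\chi_\eta\|_{L^{p'}(Y)}\approx 1\ll |\ln(\eta/2)|^{1/2}$. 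So there is no reverse-H\"older self-improvement to be had, and the routes you sketch (dyadic annuli, large-scale Lipschitz) cannot rescue the bound; combined with \eqref{chi-8} your scheme only yields $B_{p'}(1,\eta)\ge c\,\eta^{1-d+\frac{d}{p'}}$, which is weaker than the required $c\,\eta^{1-\frac{d}{2}}$. The loss occurs exactly when you let $R\to\infty$, which discards the zeroth-order information about $\chi_\eta$. The paper's proof keeps $R$ finite: starting from \eqref{chi-6} it applies the Sobolev inequality $\|\chi_\eta\varphi\|_{L^q(\R^d)}\le C\|\nabla(\chi_\eta\varphi)\|_{L^{p'}(\R^d)}$ (with $\frac1q=\frac1{p'}-\frac1d$, $1<p'<d$), so that the left-hand side becomes $\|\chi_\eta\|_{L^q(Y)}\ge \|\chi_\eta\|_{L^1(Y)}\ge c$, bounds the right-hand side using only the cheap estimates $\|\nabla\chi_\eta\|_{L^{p'}(Y)}\le \|\nabla\chi_\eta\|_{L^2(Y)}\le C\eta^{(d-2)/2}$ and $\|\chi_\eta\|_{L^{p'}(Y)}\le C$, and then optimizes the choice $R\approx \eta^{(2-d)/2}$ (resp. $R\approx|\ln(\eta/2)|^{1/2}$ in $d=2$, with the endpoint $d=2$, $p'=2$ handled separately via \eqref{chi-8} and \eqref{chi-3}). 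You would need to replace your $R\to\infty$ step by such a balanced choice of $R$, together with the Sobolev embedding, to close the argument.
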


\begin{proof}
The upper bounds are given by Theorem \ref{thm-D}.
To prove the lower bounds, in view of Lemmas \ref{lemma-U-1} and \ref{lemma-U-2},
we assume $\e=1$ and consider $B_p (1, \eta)$ for $1< p\le 2$.

Let $\varphi$ and $\chi_\eta$ be the same as in the proof of Theorem \ref{thm-Dp}.
Then
\begin{equation}\label{chi-6}
\|\nabla (\chi_\eta \varphi)\|_{L^p(\omega_{1, \eta})}
\le B_p(1, \eta)
\left\{ 
C \eta^{d-2} R^{\frac{d}{p}}
+ C R^{-1 +\frac{d}{p}} \|\nabla \chi_\eta\|_{L^p(Y)}
+ C R^{-2 +\frac{d}{p}} \| \chi_\eta\|_{L^p(Y)} \right\}.
\end{equation}
By Sobolev imbedding,  $\|\chi_\eta \varphi\|_{L^q(\R^d)}
\le C \| \nabla (\chi_\eta \varphi)\|_{L^p(\R^d)}$, where
$\frac{1}{q}=\frac{1}{p}-\frac{1}{d}$ and $1<p<d$.
It follows that
\begin{equation}\label{chi-7}
\| \chi_\eta \|_{L^q(Y)}
\le B_p (1, \eta)
\left\{
C \eta^{d-2} R + C \| \nabla \chi_\eta\|_{L^p(Y)}
+ C R^{-1} \| \chi_\eta\|_{L^p(Y)} \right\}
\end{equation}
for any $R\ge 10$.

Suppose that $d\ge 3$ and $1<p\le 2$.
Note that $\|\nabla \chi_\eta\|_{L^p(Y)} \le \| \nabla \chi_\eta \|_{L^2(Y)} \le C \eta^{\frac{d-2}{2}}$ and
$$
\|\chi_\eta\|_{L^p(Y)} \le \| \chi_\eta \|_{L^2(Y)}
\le C  \|\nabla \chi_\eta \|_{L^2(Y)} \eta^{\frac{2-d}{2}}\le C.
$$
By letting $R= C \eta^{\frac{2-d}{2}}$ in \eqref{chi-7}, we obtain 
$$
C_{p^\prime} (1, \eta)=B_p(1, \eta) \ge c\,  \eta^{\frac{2-d}{2}}
$$
for $1<p\le 2$. This gives the lower bound in \eqref{C-p-low}.
In the case $d=2$ and $1<p<2$,
we choose $R= C|\ln (\eta/2)|^{1/2}$ in \eqref{chi-7}.
In view of \eqref{chi-3} we obtain 
\begin{equation}\label{chi-9}
B_p(1, \eta) \ge c\,  |\ln (\eta/2)|^{1/2}.
\end{equation}

Finally,  note that by 
 \eqref{chi-6}, we have 
$$
\|\nabla \chi_\eta \|_{L^p(Y)}
\le B_p (1, \eta)
\left\{  C \eta^{d-2}
+ C R^{-1} \|\nabla \chi_\eta\|_{L^p(Y)}
+ C R^{-2} \| \chi_\eta\|_{L^p(Y)}
\right\}.
$$
By letting $R\to \infty$, we obtain 
\begin{equation}\label{chi-8}
B_p (1, \eta) \ge c\,  \eta^{2-d} \|\nabla \chi_\eta\|_{L^p(Y)}
\end{equation}
for $d\ge 2$ and $1< p< \infty$.
As a result, we see that \eqref{chi-9} also holds for $d=2$ and $p=2$.
\end{proof}



\section{Estimates of $\nabla u$}\label{section-GU}

In this section we establish upper and lower bounds for $A_p(\e, \eta)$ and $B_p(\e, \eta)$, defined by \eqref{A-p}
and \eqref{B-p}, respectively.
Throughout this section we assume $\{T_k\}$ are the closures of open subsets of $Y$ with uniform $C^1$ boundaries and satisfy
\eqref{condition-0}. 

\begin{thm}\label{thm-A-p}
Let $\omega_{\e, \eta}$ be given by \eqref{omega}.
Let $u\in W_0^{1, 2} (\omega_{\e, \eta})$ be  a weak solution  to the
Dirichlet problem:  $ -\Delta u=\text{\rm div}(f)$ in $\omega_{\e, \eta}$ and
$u=0$ on $\partial\omega_{\e, \eta}$, 
where $f\in L^2(\omega_{\e, \eta}; \R^d) \cap L^p(\omega_{\e, \eta}; \R^d)$ for some $1<p<\infty$.
Then
\begin{equation}\label{Ap0}
\|\nabla u_\e \|_{L^p(\omega_{\e, \eta})}
\le C_{\delta}  \eta^{-d|\frac12 -\frac{1}{p}|-\delta}
 \| f\|_{L^p(\omega_{\e, \eta})}
\end{equation}
for any $\delta\in (0, 1)$,
where $C_{ \delta}$ depends on $d$, $p$, $\delta$ and $\{T_k\}$.
\end{thm}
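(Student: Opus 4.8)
The plan is to reduce everything to the two endpoint exponents $p=2$ and $p=p_0$ for a large but fixed $p_0$, and then invoke Riesz--Thorin interpolation together with the duality identity $A_p(\e,\eta)=A_{p'}(\e,\eta)$ from Lemma~\ref{lemma-U-2}. By rescaling (Lemma~\ref{lemma-U-1}) we may take $\e=1$ throughout, so we only need to bound $A_p(1,\eta)$.

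First I would record the two pieces of input. At $p=2$, the energy estimate \eqref{3.1-e} gives $\|\nabla u\|_{L^2(\omega_{1,\eta})}\le C\|f\|_{L^2(\omega_{1,\eta})}$, i.e.\ $A_2(1,\eta)\le C$, uniformly in $\eta$. For the large exponent $p_0>2$, I would combine Theorem~\ref{thm-3.0} with the local gradient estimate of Lemma~\ref{local-lemma-1}. Concretely, for $-\Delta u=\mathrm{div}(f)$ (so $F=0$), Theorem~\ref{thm-3.0} with $\e=1$ gives $\|u\|_{L^{p_0}(\omega_{1,\eta})}\le C\,\eta^{\frac{2-d}{2}}\|f\|_{L^{p_0}(\omega_{1,\eta})}$. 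Then applying Lemma~\ref{local-lemma-1} on the translated cells $\tY_k=(1+c_0)Y$ centered at $k\in\mathbb Z^d$ (each of which contains the hole $k+\eta T_k$, using \eqref{condition-0}), summing the $p_0$-th powers over $k$, and absorbing the bounded overlap, yields
\begin{equation}
\|\nabla u\|_{L^{p_0}(\omega_{1,\eta})}
\le C\left\{ \eta^{-1}\|u\|_{L^{p_0}(\omega_{1,\eta})}+\|f\|_{L^{p_0}(\omega_{1,\eta})}\right\}
\le C\,\eta^{-1+\frac{2-d}{2}}\|f\|_{L^{p_0}(\omega_{1,\eta})},
\end{equation}
since $\eta^{-1}\ge 1$ dominates the stray $\|f\|$-term. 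Here $-1+\frac{2-d}{2}=-\frac d2$, so $A_{p_0}(1,\eta)\le C\,\eta^{-d/2}$ for every $p_0\in(2,\infty)$, with $C$ depending on $p_0$ (and on $\{T_k\}$, $d$, $c_0$).

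Next I would interpolate. Fix $2<p<\infty$ and $\delta\in(0,1)$; choose $p_0$ large so that the interpolation parameter $\theta\in(0,1)$ with $\tfrac1p=\tfrac{1-\theta}{2}+\tfrac{\theta}{p_0}$ satisfies $\theta$ close to $2(\tfrac12-\tfrac1p)$ — precisely, $\theta=\frac{2p_0}{p_0-2}\big(\tfrac12-\tfrac1p\big)\to 2(\tfrac12-\tfrac1p)$ as $p_0\to\infty$. Riesz--Thorin applied to the (linear) solution operator $f\mapsto\nabla u$, bounded by $A_2(1,\eta)\le C$ on $L^2$ and by $A_{p_0}(1,\eta)\le C\eta^{-d/2}$ on $L^{p_0}$, gives
\begin{equation}
A_p(1,\eta)\le C\,\big(\eta^{-d/2}\big)^{\theta}=C\,\eta^{-\frac{d}{2}\theta}.
\end{equation}
Taking $p_0$ large enough that $\frac d2\theta\le d(\tfrac12-\tfrac1p)+\delta$ yields $A_p(1,\eta)\le C_\delta\,\eta^{-d(\frac12-\frac1p)-\delta}$ for $2<p<\infty$. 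For $1<p<2$, Lemma~\ref{lemma-U-2} gives $A_p(1,\eta)=A_{p'}(1,\eta)\le C_\delta\,\eta^{-d(\frac12-\frac1{p'})-\delta}=C_\delta\,\eta^{-d(\frac1p-\frac12)-\delta}$; and the case $p=2$ is the energy estimate. Since $d\,\big|\tfrac12-\tfrac1p\big|$ is the common expression, this is exactly \eqref{Ap0}. Undoing the rescaling (Lemma~\ref{lemma-U-1}, which says $A_p(\e,\eta)=A_p(1,\eta)$) completes the proof.

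The main obstacle is the large-exponent estimate $A_{p_0}(1,\eta)\le C\eta^{-d/2}$: one must check that the hypotheses of Lemma~\ref{local-lemma-1} genuinely apply cell-by-cell (the holes $\eta T_k$ sit well inside $\tY_k$ by \eqref{condition-0}, and the uniform $C^1$ assumption on $\{T_k\}$ gives a constant $C$ independent of $k$), and that summing the local $L^{p_0}$ bounds over $\mathbb Z^d$ costs only a bounded overlap factor. Everything else — the $L^2$ energy bound, Theorem~\ref{thm-3.0}, Riesz--Thorin, and the duality reflection — is off-the-shelf. One subtlety worth a line: because $f\in L^2\cap L^{p_0}$, the $L^{p_0}$-weak solution produced here coincides with the $W^{1,2}_0$ solution, so the interpolation is applied to a single well-defined linear map on $L^2\cap L^{p_0}$, which is dense in each $L^p$, $2\le p\le p_0$.
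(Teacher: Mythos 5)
Your proposal is correct and follows essentially the same route as the paper: applying Lemma \ref{local-lemma-1} cell by cell and combining with the $L^{p_0}$ bound on $u$ from Theorem \ref{thm-3.0} to get $A_{p_0}(1,\eta)\le C\,\eta^{-d/2}$, then interpolating with the $L^2$ energy estimate via Riesz--Thorin and using duality for $1<p<2$, after rescaling to $\e=1$. The only minor point is that for $d=2$ Theorem \ref{thm-3.0} gives the factor $|\ln(\eta/2)|^{1/2}$ rather than $\eta^{\frac{2-d}{2}}=1$, but, exactly as in the paper, this logarithm is absorbed into $\eta^{-\delta}$, so the conclusion is unaffected.
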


\begin{proof}
By rescaling and duality we may assume $\e=1$ and $p>2$.
It follows by Lemma \ref{local-lemma-1} that  for each $k \in \mathbb{Z}^d$,
$$
\int_{k+ (Y\setminus \eta T_k)} |\nabla u|^p\, dx
\le C \left\{ \eta^{-p} \int_{k + (\tY\setminus \eta T_k)} |u|^p\, dx
+\int_{k+ (\tY\setminus \eta T_k)} |f|^p\, dx \right\},
$$
where $C$ depends on $d$, $p$, $c_0$, and the uniform $C^1$ characters of $\{T_k\}$.
By summing the inequalities above over $k\in \mathbb{Z}^d$,
 we obtain 
\begin{equation}\label{local-k}
\int_{\omega_{1, \eta}}
|\nabla u|^p\, dx
\le C \left\{ \eta^{-p} \int_{\omega_{1, \eta}} |u|^p\, dx
+\int_{\omega_{1, \eta}} |f|^p\, dx \right\}.
\end{equation}
This, together with \eqref{g-3-0}, gives
\begin{equation}\label{Ap1}
\| \nabla u\|_{L^p(\omega_{1, \eta})}
\le C_p \eta^{-\frac{d}{2}} \| f\|_{L^p(\omega_{1, \eta})}
\end{equation}
for any $p>2$ and  $d\ge 3$.
In the case $d=2$, we may  use \eqref{local-k} and \eqref{g-3-1} to obtain 
\begin{equation}\label{Ap2}
\| \nabla u\|_{L^p(\omega_{1, \eta})}
\le C \eta^{-1} |\ln (\eta/2)|^{1/2} 
 \| f\|_{L^p(\omega_{1, \eta})}
\end{equation}
for $p>2$.
Note that for $p=2$, we have the energy estimate,
\begin{equation}\label{AP3}
\| \nabla u\|_{L^2(\omega_{1, \eta})}
\le \| f\|_{L^2(\omega_{1, \eta})}.
\end{equation}
Thus, by Riesz-Thorin Interpolation Theorem, if $d\ge 3$ and $2<p<p_0$,
\begin{equation}\label{Ap5}
\|\nabla u\|_{L^p(\omega_{1, \eta})}
\le C \eta^\alpha \| f\|_{L^p (\omega_{1, \eta})},
\end{equation}
where
$$
\alpha = -\frac{d}{2} \cdot \frac{\frac12 -\frac{1}{p}}{ \frac12 -\frac{1}{p_0}}.
$$
Since
$$
\alpha \to -d \left(\frac12-\frac{1}{p}\right) \quad \text{ as } p_0 \to \infty,
$$
for any $\delta\in (0, 1)$, there exists $p_0>p$ such that
$$
\alpha > -d \left(\frac12-\frac{1}{p}\right)-\delta.
$$
As a result, we have proved that if $d\ge 3$,
\begin{equation}\label{estimate-4.10}
\| \nabla u\|_{L^p(\omega_{1,\delta})}
\le C_{ \delta} \, \eta^{-d \left|\frac{1}{2}-\frac{1}{p} \right|-\delta } \| f\|_{L^p(\omega_{1, \eta})}
\end{equation}
 for any $p>2$ and $\delta\in (0, 1)$.
 Using \eqref{Ap2}, the same argument also yields \eqref{Ap0} for $p>2$ in the case $d=2$.
 The logarithmic factor can be absorbed into $\eta^{-\delta}$.
\end{proof}

\begin{thm}\label{thm-B-p}
Let $\omega_{\e, \eta}$ be the same perforated domain as in Theorem \ref{thm-A-p}.
Let $u\in W^{1, 2}_0(\omega_{\e, \eta})$ be  a weak solution  to the
Dirichlet problem:  $ -\Delta u=F$ in $\omega_{\e, \eta}$ and
$u=0$ on $\partial\omega_{\e, \eta}$, 
where $F\in L^2(\omega_{\e, \eta}) \cap L^p(\omega_{\e, \eta})$ for some $1<p<\infty$.
Then, if $d\ge 3$,
\begin{equation} \label{Bp0}
\| \nabla u\|_{L^p(\omega_{\e, \eta})}
\le \left\{
\aligned
& C\,  \e \eta^{1-\frac{d}{2}}\| F\|_{L^p(\omega_{\e, \eta})}
 & \quad & \text{ if } 1< p\le 2, \\
&C_{ \delta}\, \e  \eta^{1-d+\frac{d}{p}-\delta }\| F\|_{L^p(\omega_{\e, \eta})}
 & \quad & \text{ if } 2< p< \infty,
\endaligned
\right.
\end{equation}
for any $\delta\in (0, 1)$,
where $C_{\delta}$ depends on $d$, $p$, $\delta$ and $\{T_k\}$.
If $d=2$, one has
\begin{equation} \label{Bp1}
\| \nabla u\|_{L^p(\omega_{\e, \eta})}
\le \left\{
\aligned
& C\,  \e |\ln (\eta/2)|^{\frac12}\| F\|_{L^p(\omega_{\e, \eta})}
 & \quad & \text{ if } 1< p\le 2, \\
&C_{ \delta}\, \e  \eta^{-1+\frac{2}{p}-\delta }\| F\|_{L^p(\omega_{\e, \eta})}
 & \quad & \text{ if } 2< p< \infty,
\endaligned
\right.
\end{equation}
for any $\delta\in (0, 1)$.
\end{thm}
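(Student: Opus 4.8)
The plan is to assemble the estimate entirely from results already established, treating the two ranges $1<p\le 2$ and $2<p<\infty$ separately. By Lemma~\ref{lemma-U-1} it suffices to prove everything for $\e=1$; the displayed power of $\e$ is then forced by scaling.

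For $1<p\le 2$ I would argue purely by duality. Lemma~\ref{lemma-U-2} gives $B_p(1,\eta)=C_{p'}(1,\eta)$ with $p'=p/(p-1)\in[2,\infty)$, and then part~(1) of Theorem~\ref{thm-D} applies directly and yields $C_{p'}(1,\eta)\le C\eta^{(2-d)/2}=C\eta^{1-d/2}$ when $d\ge 3$, and $C_{p'}(1,\eta)\le C|\ln(\eta/2)|^{1/2}$ when $d=2$. These are exactly the claimed bounds for $1<p\le 2$. Note that this range carries no $\eta^{-\delta}$ loss precisely because it is inherited from the sharp $L^{p'}$-bound for $u$ that underlies $C_{p'}$; this is the source of the dichotomy in the statement.

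For $2<p<\infty$ the plan follows the scheme of the proof of Theorem~\ref{thm-A-p}. First I would obtain a bound on $\|\nabla u\|_{L^{p_0}}$ for one large exponent $p_0$: apply Lemma~\ref{local-lemma-1} with $f=0$ in each translated cell $k+(\tY\setminus\eta T_k)$ and sum over $k\in\mathbb{Z}^d$, using the bounded overlap of the enlarged cells exactly as in the derivation of~\eqref{local-k}, to get $\|\nabla u\|_{L^{p_0}(\omega_{1,\eta})}\le C\big(\eta^{-1}\|u\|_{L^{p_0}(\omega_{1,\eta})}+\|F\|_{L^{p_0}(\omega_{1,\eta})}\big)$. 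Inserting the bound $\|u\|_{L^{p_0}(\omega_{1,\eta})}\le C\eta^{2-d}\|F\|_{L^{p_0}(\omega_{1,\eta})}$ from Theorem~\ref{thm-3.0} (resp. its logarithmic version for $d=2$) and absorbing the lower-order $\|F\|$-term --- legitimate since $\eta^{1-d}\ge 1$ --- this gives $\|\nabla u\|_{L^{p_0}(\omega_{1,\eta})}\le C\eta^{1-d}\|F\|_{L^{p_0}(\omega_{1,\eta})}$ for $d\ge 3$, with $C\eta^{-1}|\ln(\eta/2)|$ in place of $C\eta^{1-d}$ when $d=2$.

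Finally I would interpolate (Riesz--Thorin) between this $L^{p_0}$ bound and the $L^2$ energy estimate $\|\nabla u\|_{L^2(\omega_{1,\eta})}\le C\eta^{1-d/2}\|F\|_{L^2(\omega_{1,\eta})}$ coming from~\eqref{3.1-e} (resp.~\eqref{3.1-2e}). For $2<p<p_0$ this produces $\|\nabla u\|_{L^p(\omega_{1,\eta})}\le C\eta^{\beta}\|F\|_{L^p(\omega_{1,\eta})}$ with $\beta=(1-\theta)(1-\tfrac d2)+\theta(1-d)$ and $\theta=(\tfrac12-\tfrac1p)/(\tfrac12-\tfrac1{p_0})$; since $\beta\to 1-d+\tfrac dp$ (respectively $-1+\tfrac 2p$ when $d=2$) as $p_0\to\infty$, for each $\delta\in(0,1)$ one fixes $p_0$ large enough that $\beta>1-d+\tfrac dp-\delta$, the residual powers of $|\ln(\eta/2)|$ being absorbed into $\eta^{-\delta}$. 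Undoing the scaling restores the factor $\e$. I do not expect a genuine obstacle here: every nontrivial input --- Theorem~\ref{thm-3.0}, Theorem~\ref{thm-D}, Lemma~\ref{local-lemma-1}, and the $L^2$ energy estimate --- is already in place, and the only points requiring care are verifying that the interpolation exponent has the asserted limit and that the $1<p\le 2$ case is genuinely delivered, loss-free, by duality.
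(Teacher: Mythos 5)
Your proposal is correct and follows essentially the same route as the paper: the range $1<p\le 2$ by the duality $B_p=C_{p'}$ together with Theorem \ref{thm-D}, and the range $2<p<\infty$ by summing Lemma \ref{local-lemma-1} over the cells, inserting the $L^{p}$ bound for $u$ from Theorem \ref{thm-3.0} to get the $\eta^{1-d}$ (resp.\ $\eta^{-1}|\ln(\eta/2)|$) estimate, and then applying Riesz--Thorin against the $L^2$ energy estimate with $p_0$ large. Your interpolation exponent $\beta=(1-\theta)(1-\tfrac d2)+\theta(1-d)$ coincides with the paper's $\alpha$ and has the claimed limit, so the argument goes through as written.
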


\begin{proof}
Since $B_p (\e, \eta)=C_{p^\prime}(\e, \eta)$ for $1< p< \infty$,
the estimates \eqref{Bp0} and \eqref{Bp1} for the case $1< p\le 2$ are given by Theorem \ref{thm-D}.

To treat  the case $2<p<\infty$, we may assume $\e=1$ by rescaling.
Suppose  $d\ge 3$.
Let $u\in W^{1, 2}_0(\omega_{1, \eta})$ be a weak solution of
$-\Delta u=F$ in $\omega_{1, \eta}$.
Note that by Lemma \ref{local-lemma-1},   for each $k \in \mathbb{Z}^d$,
\begin{equation}\label{Bp2}
\int_{k+ (Y\setminus \eta T_k)} |\nabla u|^p\, dx
\le C \left\{ \eta^{-p} \int_{k + (\tY\setminus \eta T_k)} |u|^p\, dx
+\int_{k+ (\tY\setminus \eta T_k)} |F|^p\, dx \right\},
\end{equation}
where $C$ depends on $d$, $p$, $c_0$, and the uniform $C^1$ characters of $\{T_k\}$.
It follows by summation that
$$
\aligned
\| \nabla u \|_{L^p(\omega_{1, \eta})}
& \le C \left\{ \eta^{-1} \| u \|_{L^p(\omega_{1,\eta})}
+ \| F \|_{L^p(\omega_{1, \eta})} \right\}\\
& \le C \eta^{1-d} \| F\|_{L^p(\omega_{1, \eta})}
\endaligned
$$
for any $p>2$,
where we have used \eqref{g-3-0} for the last inequality.
By Riesz-Thorin Interpolation Theorem, this, together with the $L^2$ estimate
$$
\|\nabla u \|_{L^2(\omega_{1, \eta})} \le C \eta^{1-\frac{d}{2}} \| F\|_{L^2(\omega_{\e, \eta})},
$$
yields 
$$
\|\nabla u\|_{L^p(\omega_{1, \eta})}
\le C \eta^\alpha
\| F \|_{L^p(\omega_{1, \eta})},
$$
where $2<p< p_0$ and
$$
\alpha =1-d +\frac{d}{2} \cdot \frac{\frac{1}{p}-\frac{1}{p_0}}{\frac12 -\frac{1}{p_0}}.
$$
Since $\alpha \to 1-d +\frac{d}{p}$ as $p_0\to \infty$, by choosing $p_0$ sufficiently large,  we obtain 
$$
\| \nabla u \|_{L^p(\omega_{1, \eta})}
\le C_{ \delta} \eta^{1-d +\frac{d}{p} -\delta} \| F \|_{L^p(\omega_{1, \eta})}
$$
for any $\delta \in (0, 1)$.

The proof for the case $d=2$ and $2<p<\infty$ is similar.
It follows by \eqref{Bp2} and \eqref{g-3-1} that
$$
\|\nabla u \|_{L^p(\omega_{1, \eta})}
\le C \eta^{-1} |\ln (\eta/2)| \| F \|_{L^p(\omega_{1, \eta})},
$$
for any $p>2$. This, together with the $L^2$ estimate 
$$
\|\nabla u \|_{L^2(\omega_{1, \eta})}
\le C |\ln (\eta/2)|^{1/2} \| F \|_{L^2(\omega_{1, \eta})},
$$
yields the desired estimate by interpolation.
\end{proof}

Next we show that 
the estimates in Theorems \ref{thm-A-p} and \ref{thm-B-p} are near optimal.

\begin{lemma}\label{lemma-chi01}
Let $\chi_\eta$ be the $Y$-periodic function defined by \eqref{chi-1}.
Let $2<p< \infty$.
Then, if $d\ge 3$,
\begin{equation}\label{chi-01}
\left(\int_Y |\chi_\eta|^p\, dx \right)^{1/p}
\le C \quad \text{ and }
\quad
\left(\int_Y |\nabla \chi_\eta|^p\, dx \right)^{1/p}
\ge c\, \eta^{\frac{d}{p}-1}.
\end{equation}
If $d=2$, we have
\begin{equation}\label{chi-02}
\left(\int_Y |\chi_\eta|^p \, dx \right)^{1/p}
\le C |\ln (\eta/2)|
\quad \text{ and }
\quad
\left(\int_Y |\nabla \chi_\eta|^p\, dx \right)^{1/p}
\ge c \, \eta^{\frac{2}{p}-1}.
\end{equation}
\end{lemma}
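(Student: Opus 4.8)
The plan is to prove the two inequalities in \eqref{chi-01} (resp. \eqref{chi-02}) separately: the first by upgrading it to an $L^\infty$ bound, the second by testing the equation against a cut-off concentrated near the hole. For the \emph{upper bound}, first note that the energy identity \eqref{chi-4} together with the Poincar\'e inequality \eqref{P} with $p=2$ (the case $p=d$ when $d=2$) gives, via Lemma \ref{lemma-chi-1}, that $\|\chi_\eta\|_{L^2(Y)}\le C$ when $d\ge3$ and $\|\chi_\eta\|_{L^2(Y)}\le C|\ln(\eta/2)|$ when $d=2$. Since $\chi_\eta\ge0$ (by the maximum principle in the variational formulation) and $\chi_\eta=0$ on $\eta T$, the extension of $\chi_\eta$ by zero is a nonnegative subsolution of $-\Delta u=\eta^{d-2}$ in $\R^d$ (the only extra contribution being the inner conormal derivative on $\partial(\eta T)$, which has the right sign). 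The local boundedness (De Giorgi--Nash--Moser) estimate on a ball of radius $1$ then yields, for every $x_0\in\R^d$,
\[
\sup_{B(x_0,1)}\chi_\eta\le C\big(\|\chi_\eta\|_{L^2(B(x_0,2))}+\eta^{d-2}\big),
\]
and, since $\chi_\eta$ is $Y$-periodic, the right-hand side is $\le C$ if $d\ge3$ and $\le C|\ln(\eta/2)|$ if $d=2$. As $|Y|=1$, this gives the first inequalities in \eqref{chi-01} and \eqref{chi-02}, in fact with $L^p$ replaced by $L^\infty$.

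For the \emph{lower bound on $\nabla\chi_\eta$} I would use the cut-off $\psi_\eta$ constructed in the proof of Lemma \ref{lemma-chi-1}: it is $Y$-periodic, $0\le\psi_\eta\le1$, $\psi_\eta=0$ on $\eta T$, $\psi_\eta=1$ off a $C\eta$-neighbourhood of $\eta T$, $|\nabla\psi_\eta|\le C\eta^{-1}$, and $\int_Y\psi_\eta\,dx\ge c>0$ (we may assume $\eta$ small, the remaining range being immediate since $\chi_\eta$ is non-constant). Because $\nabla\psi_\eta$ is supported in a set of measure $\le C\eta^d$, one has $\|\nabla\psi_\eta\|_{L^{p'}(Y)}\le C\eta^{d/p'-1}$, where $p'=p/(p-1)$. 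Using $\psi_\eta$ as a test function in the weak form of \eqref{chi-1} gives
\[
\int_Y\nabla\chi_\eta\cdot\nabla\psi_\eta\,dx=\eta^{d-2}\int_Y\psi_\eta\,dx\ge c\,\eta^{d-2},
\]
so H\"older's inequality yields (the estimate being trivial if $\nabla\chi_\eta\notin L^p(Y)$)
\[
\|\nabla\chi_\eta\|_{L^p(Y)}\ \ge\ \frac{c\,\eta^{d-2}}{\|\nabla\psi_\eta\|_{L^{p'}(Y)}}\ \ge\ c\,\eta^{\,d-2-(d/p'-1)}=c\,\eta^{\,d/p-1}
\]
when $d\ge3$; for $d=2$ one has $\eta^{d-2}=1$, and the identical computation gives $\|\nabla\chi_\eta\|_{L^p(Y)}\ge c\,\eta^{1-2/p'}=c\,\eta^{2/p-1}$. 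This is the second inequality in \eqref{chi-01}, respectively \eqref{chi-02}.

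The only genuinely delicate point is the uniform $L^\infty$ bound on $\chi_\eta$ for $d\ge3$: one must rule out blow-up of $\chi_\eta$ near the shrinking holes, i.e. that the capacitary correction stays bounded, and this is exactly what the subsolution property together with the interior estimate (fed by the $L^2(Y)$ bound from \eqref{chi-4} and \eqref{P}) deliver. The gradient lower bound, by contrast, is a one-line duality argument once the test function $\psi_\eta$ is identified.
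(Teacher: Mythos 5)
Your lower bound for $\|\nabla\chi_\eta\|_{L^p(Y)}$ is essentially the paper's own argument: test \eqref{chi-1} with the cut-off $\psi$ from the proof of Lemma \ref{lemma-chi-1}, use $\int_Y\psi\,dx\approx 1$ and $\|\nabla\psi\|_{L^{p^\prime}(Y)}\le C\eta^{d/p^\prime-1}$, and apply H\"older; your exponent bookkeeping is correct for both $d\ge 3$ and $d=2$. Your upper bound, however, takes a genuinely different route. The paper applies the global $L^p$ estimate \eqref{g-3-0} (resp.\ \eqref{g-3-1} for $d=2$) to $\chi_\eta\varphi$, where $\varphi$ is a cut-off adapted to $B(0,2R)$, and then lets $R\to\infty$ to kill the commutator terms; this stays entirely inside the machinery already developed and delivers exactly the $L^p$ bound needed. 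You instead prove the stronger $L^\infty$ bound: $\chi_\eta\ge 0$, its zero extension is a subsolution of $-\Delta u=\eta^{d-2}$ across the holes, and interior De Giorgi--Nash--Moser together with periodicity and the $L^2(Y)$ bound coming from \eqref{chi-4} and \eqref{P} give $\sup\chi_\eta\le C$ for $d\ge 3$ and $\le C|\ln(\eta/2)|$ for $d=2$. This is correct, is independent of Theorem \ref{thm-3.0}, and yields a sharper conclusion. The one step to tighten is the subsolution property: justifying it via the sign of the conormal derivative on $\partial(\eta T)$ tacitly assumes enough boundary regularity to make sense of that derivative, whereas for merely Lipschitz holes it is cleaner to test the weak formulation with $v\min(\chi_\eta/\delta,1)$ for $v\ge 0$ periodic and let $\delta\to 0^+$, discarding the nonnegative term $\delta^{-1}\int_{\{\chi_\eta<\delta\}}v|\nabla\chi_\eta|^2\,dx$; likewise the uniform lower bound $\int_Y\psi\,dx\ge c$ in fact holds for all $\eta\in(0,1]$, so the aside about small $\eta$ is unnecessary. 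With these small adjustments both halves are complete.
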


\begin{proof}
Let $\varphi$ be a cut-off function in $C_0^\infty(\R^d)$ such that
$0\le \varphi \le 1$ in $\R^d$,
$\varphi=1$ in $B(0, R)$,
$\varphi=0$ outside of $B(0, 2R)$,
$|\nabla \varphi|\le C R^{-1}$ and
$|\nabla^2 \varphi|\le CR^{-2}$.
Since $\chi_\eta \varphi=0$ on $\partial \omega_{1, \eta}$ and
$$
-\Delta (\chi_\eta \varphi)= \eta^{d-2} \varphi
-2 \nabla \chi_\eta \cdot \nabla \varphi - \chi_\eta \Delta \varphi
$$
in $\omega_{1, \eta}$, it follows from \eqref{g-3-0} that if $d\ge 3$,
$$
\| \chi_\eta \varphi \|_{L^p(\omega_{1, \eta})}
\le C_p \eta^{2-d}
\left\{ \eta^{d-2} R^{\frac{d}{p}}
+ R^{-1 +\frac{d}{p}} \|\nabla \chi_\eta \|_{L^p(Y)}
+ R^{-2 +\frac{d}{p}} \|\chi_\eta \|_{L^p(Y)}
\right\}.
$$
This yields
$$
\| \chi_\eta\|_{L^p(Y)}
\le C_p \eta^{2-d}
\left\{ \eta^{d-2}
+ R^{-1} \| \nabla \chi_\eta\|_{L^p(Y)} + R^{-2} \| \chi_\eta\|_{L^p(Y)} \right\}.
$$
By letting $R\to \infty$ we obtain $\|\chi_\eta \|_{L^p(Y)} \le C$ for $d\ge 3$ and $2<p< \infty$.
If $d=2$, we may use \eqref{g-3-1} in the place of \eqref{g-3-0} to obtain 
$\|\chi_\eta\|_{L^p(Y)} \le C |\ln (\eta/2)|$ for $2< p< \infty$.

To prove the lower bounds for $\|\nabla \chi_\eta \|_{L^p(Y)}$,
we construct a function $\psi\in H_{per}^1(Y)$ as in the proof of Lemma \ref{lemma-chi-1}.
Note that 
$$
\int_Y \psi\, dx \approx 1
\quad \text{ and }
\quad
\left(\int_Y |\nabla \psi |^{p^\prime}\, dx \right)^{1/p^\prime}
\le C \eta^{-1+\frac{d}{p^\prime}}.
$$
It follows that
$$
C \eta^{d-2}
\le \| \nabla \chi_\eta \|_{L^p(Y)}
\| \nabla \psi \|_{L^{p^\prime}(Y)}.
$$
This yields 
$$
\| \nabla \chi_\eta\|_{L^p(Y)} \ge c\,  \eta^{\frac{d}{p}- 1}
$$
for $d\ge 2$.
\end{proof}

The following theorem provides lower bounds for $A_p(\e, \eta)$.

\begin{thm}\label{thm-Ap000}
Let $A_p(\e, \eta)$ be defined by \eqref{A-p} for the periodically perforated domain 
$\omega_{\e, \eta}$ in \eqref{p-domain}. Then, for $d\ge 3$,
\begin{equation}\label{Ap10}
A_p (\e, \eta) \ge c\,  \eta^{-d |\frac{1}{2}-\frac{1}{p}| },
\end{equation}
and for $d=2$,
\begin{equation}\label{Ap11}
A_p (\e, \eta) \ge c\,   |\ln (\eta/2)|^{-\frac12} \eta^{-2 |\frac12-\frac{1}{p}|},
\end{equation}
where $1<p< \infty$ and $c>0$ depends only on $d$, $p$ and $T$.
\end{thm}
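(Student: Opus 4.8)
The plan is to test the operator norm $A_p(\e,\eta)$ against the localized corrector $\chi_\eta$, in the spirit of the lower bounds in Theorems~\ref{thm-Dp} and~\ref{thm-C-p-1}. By Lemmas~\ref{lemma-U-1} and~\ref{lemma-U-2} one has $A_p(\e,\eta)=A_p(1,\eta)=A_{p'}(1,\eta)$, so it suffices to treat $\e=1$ and $2<p<\infty$; the case $p=2$ is trivial since the energy estimate gives $A_2(1,\eta)=1$. Fix a large radius $R$ and a cut-off $\varphi\in C_0^\infty(\R^d)$ with $\varphi=1$ on $B(0,R)$, $\varphi=0$ outside $B(0,2R)$, $|\nabla\varphi|\le CR^{-1}$, $|\nabla^2\varphi|\le CR^{-2}$, and set $u=\chi_\eta\varphi\in W^{1,2}_0(\omega_{1,\eta})\cap W^{1,p}_0(\omega_{1,\eta})$. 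By \eqref{eq-00},
$$
-\Delta u=\eta^{d-2}\varphi-2\,\nabla\chi_\eta\cdot\nabla\varphi-\chi_\eta\Delta\varphi\qquad\text{in }\omega_{1,\eta}.
$$

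The first step is to write this right-hand side as $\mathrm{div}(f)$ with $f\in L^2\cap L^p$ of the smallest possible $L^p$-norm. The only obstruction to a compactly supported $f$ is the nonzero average of the leading term $\eta^{d-2}\varphi$; I would remove it by letting $v$ be the Newtonian potential of $\varphi$ (so $\Delta v=\varphi$ in $\R^d$ and $\|\nabla v\|_{L^p(\R^d)}\le CR^{1+d/p}$, using $p>d/(d-1)$), which gives $\eta^{d-2}\varphi=\mathrm{div}(\eta^{d-2}\nabla v)$. Integrating the remaining two terms by parts one is led to
$$
f=\eta^{d-2}\nabla v-\chi_\eta\nabla\varphi+h,\qquad \mathrm{div}(h)=-\,\nabla\chi_\eta\cdot\nabla\varphi ,
$$
and the crucial point is that, since $\int_Y\nabla\chi_\eta=0$ by periodicity, the error datum $\nabla\chi_\eta\cdot\nabla\varphi$ has essentially vanishing average on every unit cell of the annulus $R<|x|<2R$. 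Solving $\mathrm{div}(h)=-\nabla\chi_\eta\cdot\nabla\varphi$ cell by cell by a Bogovskii operator on unit cubes (no loss of a factor $R$), together with a slowly varying macroscopic piece whose size is governed by $\|\nabla\chi_\eta\|_{L^1(Y)}\lesssim\eta^{(d-2)/2}$, then makes $\|h\|_{L^p}$ no larger than the other two pieces of $f$.

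With this $f$, I would estimate the quotient $A_p(1,\eta)\ge\|\nabla u\|_{L^p(\omega_{1,\eta})}/\|f\|_{L^p(\omega_{1,\eta})}$. On $B(0,R)$ we have $\varphi\equiv1$, so periodicity and the lower bound of Lemma~\ref{lemma-chi01} give $\|\nabla u\|_{L^p(\omega_{1,\eta})}\ge\|\nabla\chi_\eta\|_{L^p(B(0,R)\cap\omega_{1,\eta})}\ge c\,R^{d/p}\eta^{\,d/p-1}$. On the other hand $\|\chi_\eta\|_{L^p(Y)}\le C$ from Lemma~\ref{lemma-chi01}, together with the bounds on $\nabla v$ and $h$, yields $\|f\|_{L^p(\omega_{1,\eta})}\le C\eta^{d-2}R^{1+d/p}+CR^{d/p-1}$. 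Choosing $R$ of the order of the homogenization length $\eta^{1-d/2}$, so that the numerator term $R^{d/p}\eta^{d/p-1}$ and the denominator term $\eta^{d-2}R^{1+d/p}$ are balanced, gives
$$
A_p(1,\eta)\ \ge\ c\,\frac{\eta^{\,d/p-1}}{\eta^{\,d/2-1}}\ =\ c\,\eta^{-d(\frac12-\frac1p)},
$$
which is \eqref{Ap10}. The estimate \eqref{Ap11} in the case $d=2$ follows from the same construction, with $\eta^{1-d/2}$ replaced by $|\ln(\eta/2)|^{1/2}$ and the powers of $\eta$ replaced by the logarithmic quantities in \eqref{chi-3} and \eqref{chi-02}.

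The main obstacle is the control of the error field $h$, i.e.\ of the cut-off error $\nabla\chi_\eta\cdot\nabla\varphi$: solving the divergence equation naively on $B(0,2R)$ loses an extra power of $\eta$ and cannot beat the denominator, so one must genuinely use the periodicity cancellation $\int_Y\nabla\chi_\eta=0$ together with a sharp local bound $\|\nabla\chi_\eta\|_{L^p(Y)}\asymp\eta^{\,d/p-1}$, which itself follows from the near-hole profile $|\nabla\chi_\eta(x)|\sim\eta^{d-2}|x|^{1-d}$. For $p$ very large (equivalently, $p'$ near $d/(d-1)$) one needs the additional observation that the $L^p$-mass of $\nabla\chi_\eta$ sits within distance $O(\eta)$ of the holes, which lets $h$ be taken of size $O(\eta)\,\|\nabla\chi_\eta\cdot\nabla\varphi\|_{L^p}$ and closes the estimate uniformly in $1<p<\infty$.
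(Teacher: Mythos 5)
Your overall testing strategy coincides with the paper's: both use $u=\chi_\eta\varphi$ with a cut-off at the scale $R\sim\eta^{1-\frac d2}$ (resp.\ $R\sim|\ln(\eta/2)|^{1/2}$), the lower bound $\|\nabla\chi_\eta\|_{L^p(Y)}\ge c\,\eta^{\frac dp-1}$ and the bound $\|\chi_\eta\|_{L^p(Y)}\le C$ from Lemma \ref{lemma-chi01}. The difference is how the non-divergence part of $-\Delta(\chi_\eta\varphi)$ is fed into the definition \eqref{A-p}: the paper first upgrades $A_p$ into the estimate \eqref{o-p-1}, valid for data $F+\mathrm{div}(f)$ with $F\in L^q$, $\frac1q=\frac1p+\frac1d$, via Sobolev embedding and duality, and then uses the decomposition \eqref{eq-0}, whose error terms involve only $\chi_\eta$ (never $\nabla\chi_\eta$). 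You instead insist on writing everything as $\mathrm{div}(f)$, and here is the genuine gap: your decomposition leaves the error datum $\nabla\chi_\eta\cdot\nabla\varphi$, whose $L^p$ norm is of order $R^{\frac dp-1}\eta^{\frac dp-1}$ (this already requires a sharp \emph{upper} bound $\|\nabla\chi_\eta\|_{L^p(Y)}\lesssim\eta^{\frac dp-1}$, which Lemma \ref{lemma-chi01} does not provide and you do not prove; the claimed profile $|\nabla\chi_\eta|\sim\eta^{d-2}|x|^{1-d}$ is plausible but nowhere established). With the straightforward per-cell Bogovskii bound $\|h\|_{L^p}\lesssim\|\nabla\chi_\eta\cdot\nabla\varphi\|_{L^p}$ the resulting quotient at $R\sim\eta^{1-\frac d2}$ is only $c\,\eta^{1-\frac d2}=c\,\eta^{-d(\frac12-\frac1p)}\cdot\eta^{1-\frac dp}$, i.e.\ short of \eqref{Ap10} by the positive power $\eta^{1-\frac dp}$ whenever $p>d$ (optimizing $R$ only improves this to a shortfall of $\eta^{(1-\frac dp)/2}$). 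Your proposed rescue -- gaining a factor $O(\eta)$ in the divergence inversion because the $L^p$-mass of $\nabla\chi_\eta$ sits within $O(\eta)$ of the holes -- is asserted, not proved, and it is not a routine application of Bogovskii: the scaling gain requires the datum to have mean zero on an $O(\eta)$-neighborhood of each hole, whereas the cancellation $\int_Y\nabla\chi_\eta\,dx=0$ you invoke holds only at the unit cell scale; making this work would need a dyadic decomposition around each hole with control of the shell averages and of the leftover per-cell means, none of which is supplied. So as written the argument does not close precisely in the regime $p>d$ that you yourself flag.

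The gap is easily repaired, and in two ways. Either adopt the paper's step \eqref{o-p-1}, which removes the need to invert the divergence at all; or keep your "all divergence-form" philosophy but use the identity $-2\nabla\chi_\eta\cdot\nabla\varphi-\chi_\eta\Delta\varphi=-2\,\mathrm{div}(\chi_\eta\nabla\varphi)+\chi_\eta\Delta\varphi$ (this is exactly \eqref{eq-0}) so that the non-divergence remainder is the benign $\eta^{d-2}\varphi+\chi_\eta\Delta\varphi$, with $L^p$ norm $\lesssim\eta^{d-2}R^{\frac dp}+R^{\frac dp-2}$; its Newtonian potential $w$ satisfies $\|\nabla w\|_{L^p(\R^d)}\le CR\big(\eta^{d-2}R^{\frac dp}+R^{\frac dp-2}\big)$ for $p>d'$, which reproduces the paper's denominator $\eta^{d-2}R^{1+\frac dp}+R^{\frac dp-1}$ with no Bogovskii machinery and no upper bound on $\nabla\chi_\eta$. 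Two smaller points: in $d=2$ your field $\eta^{d-2}\nabla v$ (and $\nabla w$) fails to lie in $L^2(\R^2)$, so it is not admissible in \eqref{A-p} without an extra truncation or density argument, which you should address rather than say the case "follows from the same construction"; and note that your shortcut $A_2(1,\eta)=1$ does legitimately dispose of $p=2$ (indeed it is stronger than \eqref{Ap11} for $d=2$), where the paper argues differently.
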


\begin{proof}
In view of Lemmas \ref{lemma-U-1} and \ref{lemma-U-2}
we may assume that $\e=1$ and $2\le p< \infty$.
We first consider the case $d\ge 3$.
Let $u$ be a weak solution of 
$-\Delta u=\text{\rm div}(f) $ in $\omega_{1, \eta}$  with
$u=0$ on $\partial\omega_{1, \eta}$.
By Sobolev imbedding, we have
$$
\| u\|_{L^{q^\prime} (\omega_{1, \eta})}
\le C  \| \nabla u\|_{L^{p^\prime} (\omega_{1, \eta})}
\le C A_p (1, \eta) \| f\|_{L^{p^\prime} (\omega_{1, \eta})},
$$
where $\frac{1}{q^\prime }=\frac{1}{p^\prime }-\frac{1}{d}$ and $1<p^\prime < d$.
By duality this implies that if $-\Delta v= G$ in $\omega_{1, \eta}$ and $v=0$ on $\partial\omega_{1, \eta}$,
then
$$
\| \nabla v \|_{L^{p} (\omega_{1, \eta})}
\le C A_p (1, \eta) \| G \|_{L^{q} (\omega_{1, \eta})}.
$$
As a result, we have proved that if $-\Delta u=F +\text{\rm div}(f)$ in $\omega_{1, \eta}$ and $u=0$ on $\partial\omega_{1, \eta}$, then
\begin{equation}\label{o-p-1}
\| \nabla u\|_{L^p(\omega_{1, \eta})}
\le C A_p(1, \eta)
\big\{ \| F \|_{L^q(\omega_{1, \eta})} + \| f\|_{L^p(\omega_{1, \eta})} \big\},
\end{equation}
where $d^\prime< p< \infty$ and $\frac{1}{q}=\frac{1}{p} +\frac{1}{d}$.

Let $\varphi \in C_0^\infty(( B(0, 2R))$ be a cut-off function such that $\varphi=1$ in $B(0, R)$, $0\le \varphi \le 1$,
$|\nabla \varphi |\le C/R$, and $|\nabla^2 \varphi |\le C/R^2$, where $R\ge d$.
Let $\chi_\eta$ be the 1-periodic function in Lemma \ref{lemma-chi01}.
Since
\begin{equation}\label{eq-0}
-\Delta (\chi_\eta \varphi)
=\eta^{d-2} \varphi -2\,  \text{\rm div}( \chi_\eta \nabla \varphi) + \chi_\eta \Delta \varphi
\end{equation}
in $\omega_{1, \eta}$ and $\chi_\eta \varphi=0$ on $\partial \omega_{1, \eta}$,
we deduce from \eqref{o-p-1} that 
$$
\aligned
\| \nabla (\chi_\eta \varphi)\|_{L^p(\omega_{1, \eta})}
 &\le C  A_p(1, \eta)
\left\{
\eta^{d-2} R^{\frac{d}{q}}
+ R^{-1+\frac{d}{p}} \| \chi_\eta \|_{L^p(Y)}
+ R^{-2 +\frac{d}{q}} \| \chi_\eta \|_{L^q(Y)}  \right\}\\
&\le C A_p (1, \eta)
\big\{ \eta^{d-2} R^{\frac{d}{p}+1}
+ R^{-1 +\frac{d}{p}}
\big\},
\endaligned
$$
where we have used \eqref{chi-01}.
Since $\chi_\eta$ is $Y$-periodic, it is not hard to see that
$$
\| \nabla (\chi_\eta \varphi) \|_{L^p(\omega_{1, \eta})}
\ge \| \nabla \chi_\eta \|_{L^p (B(0, R))}
\approx
R^{\frac{d}{p} }\| \nabla \chi_\eta \|_{L^p(Y)}.
$$
It follows that 
$$
\aligned
\|\nabla \chi_\eta\|_{L^p(Y)}
 & \le C A_p (1, \eta)
\big\{ \eta^{d-2} R + R^{-1}\big\}\\
&\le C A_p (1, \eta) \eta^{\frac{d}{2}-1},
\endaligned
$$
where we have chosen $ R= C\eta^{-\frac{d-2}{2}}$.
This, together with \eqref{chi-01},  yields   \eqref{Ap10} for $2\le p< \infty$ and $\e=1$.

The proof for the case $d=2$ and $2<p<\infty$ is similar.
 Indeed,  
using \eqref{chi-02},  we may show that
$$
\| \nabla (\chi_\eta \varphi) \|_{L^p(\omega_{1, \eta})}
\le C A_p (1, \eta)
\big\{R^{\frac{2}{p} +1 } 
+   | \ln (\eta/2) | R^{\frac{2}{p}-1} \big\}.
$$
It follows that
$$
\aligned
\|\nabla \chi_\eta\|_{L^p(Y)}
 & \le C A_p (1, \eta) \big\{
R +  |\ln( \eta/2)| R^{-1} \big\}\\
& \le C A_p (1, \eta) |\ln (\eta/2)|^{1/2},
\endaligned
$$
where we have let $R=C |\ln (\eta/2)|^{1/2}$.
This, together with \eqref{chi-02},  gives \eqref{Ap11} for $2< p< \infty$ and $\e=1$.

Finally, in the case $d=2$ and $p=2$, we note that by Lemma \ref{lemma-3.1},
$$
C_2 (1, \eta) \le C A_2 (1, \eta) |\ln (\eta/2)|^{1/2}.
$$
By Theorem \ref{thm-C-p-1}, $C_2(1, \eta) \ge c |\ln (\eta/2)|^{1/2}$.
It follows that $A_2(1, \eta)\ge c>0$.
\end{proof}

The next theorem gives lower bounds for $B_p(\e, \eta)$.

\begin{thm}\label{thm-Bp0}
Let $B_p(\e, \eta)$ be defined by \eqref{B-p} for the periodically perforated domain $\omega_{\e, \eta}$ in 
\eqref{p-domain}.
Then, for $d\ge 3$,
\begin{equation}\label{Bp01}
B_p (\e, \eta)
\ge \left\{
\aligned
&  c\, \e  \eta^{1-\frac{d}{2}} & \quad & \text{ for } 1<p\le 2,\\
& c\, \e \eta^{1-d +\frac{d}{p}}  & \quad & \text{ for } 2<p< \infty,
\endaligned
\right.
\end{equation}
where $c>0$ depends only on $d$, $p$ and $T$.
If $d=2$, we have
\begin{equation}\label{Bp02}
B_p (\e, \eta)
\ge \left\{
\aligned
&  c\,  \e |\ln (\eta/2)|^{\frac12} & \quad & \text{ for } 1<p\le 2,\\
& c \, \e \eta^{-1+\frac{2}{p}}  & \quad & \text{ for } 2<p< \infty.
\endaligned
\right.
\end{equation}
\end{thm}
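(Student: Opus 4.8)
The plan is to combine a duality-plus-scaling reduction, which settles $1<p\le 2$ for free, with the corrector argument of Theorems~\ref{thm-Dp}, \ref{thm-C-p-1} and \ref{thm-Ap000} for the remaining range. For the first part: by Lemma~\ref{lemma-U-2} one has $B_p(\e,\eta)=C_{p^\prime}(\e,\eta)$ with $p^\prime=\frac{p}{p-1}\ge 2$, so the claimed lower bounds in \eqref{Bp01} and \eqref{Bp02} for $1<p\le 2$ are exactly the lower bounds for $C_{p^\prime}(\e,\eta)$ already proved in Theorem~\ref{thm-C-p-1}; nothing new is required. Hence the entire issue is the range $2<p<\infty$, where I would first use $B_p(\e,\eta)=\e\,B_p(1,\eta)$ (Lemma~\ref{lemma-U-1}) to reduce to $\e=1$.

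For $2<p<\infty$ and $\e=1$, the competitor is $u_R=\chi_\eta\varphi_R$, where $\chi_\eta$ is the $Y$-periodic corrector of \eqref{chi-1} and $\varphi_R\in C_0^\infty(B(0,2R))$ is a cutoff equal to $1$ on $B(0,R)$ with $|\nabla\varphi_R|\le CR^{-1}$ and $|\nabla^2\varphi_R|\le CR^{-2}$. By the identity \eqref{eq-00}, $u_R$ solves $-\Delta u_R=F_R$ in $\omega_{1,\eta}$ with zero boundary data, where $F_R=\eta^{d-2}\varphi_R-2\nabla\chi_\eta\cdot\nabla\varphi_R-\chi_\eta\Delta\varphi_R$; note every term here sits in the $F$-slot, so no divergence term and hence no Sobolev/duality detour (as in the $A_p$ argument) is needed. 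Since the $C^1$ holes give local $W^{1,p}$ regularity for $\chi_\eta$, $u_R$ is an admissible competitor in \eqref{B-p}, so $\|\nabla u_R\|_{L^p(\omega_{1,\eta})}\le B_p(1,\eta)\|F_R\|_{L^p(\omega_{1,\eta})}$. I would then estimate both sides using the $Y$-periodicity of $\chi_\eta$: on $B(0,R)$ one has $\nabla u_R=\nabla\chi_\eta$, so the left side is at least $c\,R^{d/p}\|\nabla\chi_\eta\|_{L^p(Y)}$; and, since $\nabla\varphi_R$ and $\nabla^2\varphi_R$ are supported in an annulus of volume $\sim R^d$, the right side is at most $C\{\eta^{d-2}R^{d/p}+R^{\frac{d}{p}-1}\|\nabla\chi_\eta\|_{L^p(Y)}+R^{\frac{d}{p}-2}\|\chi_\eta\|_{L^p(Y)}\}$.

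Dividing through by $R^{d/p}$ leaves, for every $R\ge 10$,
\[ c\,\|\nabla\chi_\eta\|_{L^p(Y)}\le B_p(1,\eta)\{ C\eta^{d-2}+CR^{-1}\|\nabla\chi_\eta\|_{L^p(Y)}+CR^{-2}\|\chi_\eta\|_{L^p(Y)} \}, \]
and letting $R\to\infty$ — legitimate because $\|\chi_\eta\|_{L^p(Y)}$ and $\|\nabla\chi_\eta\|_{L^p(Y)}$ are finite, the former bounded in Lemma~\ref{lemma-chi01} — yields $B_p(1,\eta)\ge c\,\eta^{2-d}\|\nabla\chi_\eta\|_{L^p(Y)}$. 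Feeding in the lower bound $\|\nabla\chi_\eta\|_{L^p(Y)}\ge c\,\eta^{\frac{d}{p}-1}$ from Lemma~\ref{lemma-chi01} gives $B_p(1,\eta)\ge c\,\eta^{1-d+\frac{d}{p}}$ for $d\ge 3$; for $d=2$ the factor $\eta^{d-2}$ is $1$, so the same chain gives $B_p(1,\eta)\ge c\,\|\nabla\chi_\eta\|_{L^p(Y)}\ge c\,\eta^{\frac{2}{p}-1}$. Undoing the scaling reduction then produces \eqref{Bp01} and \eqref{Bp02} for $2<p<\infty$.

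The bookkeeping with $R$ is routine; the genuine content has already been packaged into Lemma~\ref{lemma-chi01}, whose lower bound for $\|\nabla\chi_\eta\|_{L^p(Y)}$ — obtained there by pairing $\chi_\eta$ with an explicit (logarithmic when $d=2$) comparison function — is the real ingredient, so in that sense the ``main obstacle'' is a result that is already in hand. The only points that then demand a little care are checking that $u_R$ really is admissible, i.e.\ that $F_R\in L^p\cap L^2$ (this is where the local $W^{1,p}$ regularity of $\chi_\eta$ enters, and is unproblematic for $C^1$ holes and all $1<p<\infty$), and that the passage $R\to\infty$ is justified by the finiteness of the unit-cell norms of $\chi_\eta$ and $\nabla\chi_\eta$. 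I do not anticipate a substantive obstacle beyond assembling the auxiliary results correctly.
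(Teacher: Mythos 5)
Your proposal is correct and follows essentially the same route as the paper: the case $1<p\le 2$ via $B_p=C_{p'}$ and Theorem \ref{thm-C-p-1}, and the case $2<p<\infty$ via the rescaling to $\e=1$, the competitor $\chi_\eta\varphi_R$ with \eqref{eq-00}, letting $R\to\infty$ to get $B_p(1,\eta)\ge c\,\eta^{2-d}\|\nabla\chi_\eta\|_{L^p(Y)}$, and then the lower bound of Lemma \ref{lemma-chi01}. You merely write out explicitly the intermediate $R$-inequality that the paper cites from the proofs of Theorems \ref{thm-Dp} and \ref{thm-C-p-1}.
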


\begin{proof}
Since $B_p (\e, \eta)=C_{p^\prime} (\e, \eta)$,
the case $1<p\le 2$ is contained in Theorem \ref{thm-C-p-1}.
To treat  the case $2< p<\infty$, we assume $\e=1$.
As in the proof of Theorem \ref{thm-Dp}, we may use the equation \eqref{eq-00}
to deduce that
$$
\|\nabla \chi_\eta\|_{L^p(Y)}
\le C \eta^{d-2} B_p (1, \eta).
$$
By Lemma \ref{lemma-chi01} this leads to
$$
B_p (1, \eta)
\ge c\,  \eta^{2-d} \| \nabla \chi_\eta\|_{L^p(Y)}
\ge  c\, \eta^{1-d +\frac{d}{p}}
$$
for $d\ge 2$.
\end{proof}

We are now in a position to give  the proof of Theorem \ref{main-thm-2}. 

\begin{proof}[Proof of Theorem \ref{main-thm-2}]
Let $d\ge 3$ and  $1<p<\infty$.
It follows from Theorems  \ref{thm-D}, \ref{thm-A-p} and \ref{thm-B-p} that if $f\in L^p(\omega_{\e, \eta}; \R^d)\cap L^2(\omega_{\e, \eta}; \R^d)$ and
$F \in L^p(\omega_{\e, \eta})\cap L^2 (\omega_{\e, \eta})$,
the weak solution of \eqref{D-omega} in $W^{1, 2}_0(\omega_{\e, \eta})$ satisfies the estimates
\eqref{m-e-1} and \eqref{m-e-2}.
By a density argument this implies that for any $f\in L^p(\omega_{\e, \eta}; \R^d)$ and
$F\in L^p(\omega_{\e, \eta})$, the Dirichlet problem \eqref{D-omega} has a solution $u$ in $W^{1, p}_0(\omega_{\e, \eta})$,
which satisfies the estimates \eqref{m-e-1} and \eqref{m-e-2}.

To prove the uniqueness, let's assume that  $\Delta u=0$ in $\omega_{1, \eta}$
and $u=0$ on $\partial \omega_{1, \eta}$  for some $u\in W^{1, p}(\omega_{1, \eta})$ and $p>1$.
By $L^\infty$ estimates for harmonic functions in Lipschitz domains,
$$
\max_{k + (Y\setminus \eta T_k)} |u|
\le C_\eta \int_{k + (\tY\setminus \eta T_k)} |u|\,dx,
$$
where $C_\eta$ depends on $\eta$, but not on $k$.
Since $u\in L^p(\omega_{1, \eta})$, we see  that $u\in L^\infty(\omega_{1, \eta})$ and that
$
u (x) \to 0 \text{ as } |x| \to \infty.
$
By applying  the maximum principle in $B(0, R) \cap \omega_{\e, \eta}$ and then letting $R \to \infty$,
 we conclude that $u \equiv 0$ in $\omega_{1, \eta}$.
This completes the proof of Theorem \ref{main-thm-2}.
\end{proof}

Theorem \ref{main-thm-2} treats the case $d\ge 3$.
The estimates for $d=2$ are summarized below.

\begin{thm}\label{thm-2d-0}
Suppose  $d=2$ and $1<p<\infty$.
Let $\omega_{\e, \eta}$ be the same as in Theorem \ref{main-thm-2}.
Then, for any $F\in L^p(\omega_{\e, \eta})$ and $f\in L^p(\omega_{\e, \eta}; \R^d)$, the Dirichlet problem 
\eqref{D-omega} has a unique solution in $W^{1, p}_0(\Omega_{\e, \eta})$.
Moreover, the solution satisfies the estimates,
\begin{equation}\label{m-2d-0}
\|\nabla u \|_{L^p(\omega_{\e, \eta})}
\le \left\{
\aligned
& C \e |\ln (\eta/2)|^{\frac{1}{2}} \| F \|_{L^p(\omega_{\e, \eta})}
+ C_\delta \, \eta^{-2 |\frac12-\frac{1}{p}|-\delta} \| f\|_{L^p(\omega_{\e, \eta})} & \quad & \text{ for } 1< p\le 2,\\
& C_\delta \, \e \eta^{-1 +\frac{2}{p}-\delta}
\| F \|_{L^p(\omega_{\e, \eta})} + C_\delta \eta^{-2 |\frac12 -\frac{1}{p}| -\delta } \| f\|_{L^p(\omega_{\e, \eta})}
& \quad & \text{ for } 2< p < \infty,
\endaligned
\right.
\end{equation}
and
\begin{equation}\label{m-2d-1}
\| u \|_{L^p(\omega_{\e, \eta})}
\le \left\{
\aligned
& C \e^2  |\ln (\eta/2)| \| F \|_{L^p(\omega_{\e, \eta})}
+ C_\delta\, \e \eta^{1-\frac{2}{p} -\delta} \| f\|_{L^p(\omega_{\e, \eta})} & \quad & \text{ for } 1< p< 2,\\
& C \e^2 |\ln (\eta/2)| 
\| F \|_{L^p(\omega_{\e, \eta})} + C \e |\ln (\eta/2)|^{\frac12}
 \| f\|_{L^p(\omega_{\e, \eta})}
& \quad & \text{ for } 2\le  p < \infty,
\endaligned
\right.
\end{equation}
for any $\delta \in (0, 1)$, where $C$ depends on $p$ and $\{T_k\}$, and $C_\delta$ also depends on $\delta$.
\end{thm}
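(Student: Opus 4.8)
The plan is to obtain Theorem \ref{thm-2d-0} by simply collecting the $d=2$ instances of the operator-norm bounds already proved in Sections \ref{section-U} and \ref{section-GU}, since Theorems \ref{thm-D}, \ref{thm-A-p} and \ref{thm-B-p} were all stated and established to include the case $d=2$. First, for data $f\in L^p(\omega_{\e,\eta};\R^d)\cap L^2(\omega_{\e,\eta};\R^d)$ and $F\in L^p(\omega_{\e,\eta})\cap L^2(\omega_{\e,\eta})$, the weak solution of \eqref{D-omega} in $W^{1,2}_0(\omega_{\e,\eta})$ satisfies, by definition of $A_p,B_p,C_p,D_p$,
\[
\|\nabla u\|_{L^p(\omega_{\e,\eta})}\le A_p(\e,\eta)\|f\|_{L^p(\omega_{\e,\eta})}+B_p(\e,\eta)\|F\|_{L^p(\omega_{\e,\eta})},\qquad
\|u\|_{L^p(\omega_{\e,\eta})}\le C_p(\e,\eta)\|f\|_{L^p(\omega_{\e,\eta})}+D_p(\e,\eta)\|F\|_{L^p(\omega_{\e,\eta})}.
\]
For the gradient estimate I would insert $A_p(\e,\eta)\le C_\delta\,\eta^{-2|\frac12-\frac1p|-\delta}$ from Theorem \ref{thm-A-p} (absorbing the logarithmic factor of \eqref{Ap2} into $\eta^{-\delta}$) and $B_p(\e,\eta)$ from \eqref{Bp1}, which gives exactly \eqref{m-2d-0}. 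For the $L^p$ estimate on $u$, $D_p(\e,\eta)\le C\e^2|\ln(\eta/2)|$ from \eqref{D-p-1} covers the $F$-term for all $1<p<\infty$; the $f$-term uses $C_p(\e,\eta)\le C\e|\ln(\eta/2)|^{1/2}$ from \eqref{C-p-1} when $2\le p<\infty$, while for $1<p<2$ one invokes the duality identity $C_p(\e,\eta)=B_{p'}(\e,\eta)$ of Lemma \ref{lemma-U-2} together with \eqref{Bp1} at the exponent $p'>2$, using $-1+\frac{2}{p'}=1-\frac{2}{p}$ to read off $C_p(\e,\eta)\le C_\delta\,\e\,\eta^{1-\frac2p-\delta}$. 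This yields \eqref{m-2d-1}.

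Next I would run the standard density argument: given arbitrary $f\in L^p(\omega_{\e,\eta};\R^d)$ and $F\in L^p(\omega_{\e,\eta})$, approximate by $L^2\cap L^p$ data, observe via the uniform bounds just recorded that the corresponding $W^{1,2}_0$-solutions form a Cauchy sequence in $W^{1,p}_0(\omega_{\e,\eta})$, and pass to the limit to produce a solution $u\in W^{1,p}_0(\omega_{\e,\eta})$ still satisfying \eqref{m-2d-0}--\eqref{m-2d-1}. For uniqueness I would repeat the argument from the end of the proof of Theorem \ref{main-thm-2}: if $\Delta u=0$ in $\omega_{1,\eta}$, $u=0$ on $\partial\omega_{1,\eta}$ and $u\in W^{1,p}(\omega_{1,\eta})$, then the $L^\infty$ estimate for harmonic functions in Lipschitz domains gives $\max_{k+(Y\setminus\eta T_k)}|u|\le C_\eta\int_{k+(\tY\setminus\eta T_k)}|u|\,dx$ with $C_\eta$ independent of $k$, so $u\in L^p$ forces $u\in L^\infty(\omega_{1,\eta})$ and $u(x)\to0$ as $|x|\to\infty$; the maximum principle on $B(0,R)\cap\omega_{1,\eta}$, letting $R\to\infty$, gives $u\equiv0$, and rescaling handles general $\e$.

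I do not expect a genuine obstacle here, since every ingredient is already available; the proof is essentially bookkeeping. The only point requiring care is tracking logarithmic versus power factors in $\eta$: the $d=2$ Moser-type bound in Theorem \ref{thm-A-p} contributes a factor $|\ln(\eta/2)|^{1/2}$ that must be absorbed into $\eta^{-\delta}$ in the gradient estimate, whereas in the $L^p$ estimate on $u$ the corresponding factors $|\ln(\eta/2)|$ and $|\ln(\eta/2)|^{1/2}$ must be retained as written, and one must check the exponent translation $-1+\frac{2}{p'}=1-\frac{2}{p}$ in the $1<p<2$ duality step.
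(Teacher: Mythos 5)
Your proposal is correct and follows essentially the same route as the paper, which obtains \eqref{m-2d-0} from \eqref{Ap0} and \eqref{Bp1} and \eqref{m-2d-1} from \eqref{C-p-1}--\eqref{D-p-1} (with the $1<p<2$ term for $f$ coming from $C_p=B_{p^\prime}$ and \eqref{Bp1}, exactly the duality and exponent translation you spell out), together with the same density and maximum-principle arguments used for Theorem \ref{main-thm-2}. Your bookkeeping of the logarithmic versus power factors in $\eta$ matches the paper's statement.
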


\begin{proof}
See \eqref{Ap0} and \eqref{Bp1} for \eqref{m-2d-0}, and  \eqref{C-p-1}-\eqref{D-p-1} for \eqref{m-2d-1}.
\end{proof}




\section{Estimates in a bounded perforated domain}\label{section-B}

In this section we consider the Dirichlet problem \eqref{D-01}
in a bounded perforated domain $\Omega_{\e,\eta}$ given by   \eqref{Omega}.
Throughout this section, unless indicated otherwise,
 we assume that $\Omega$ is a bounded $C^1$ domain and that 
$\{ T_k: \, k \in \mathbb{Z}^d \} $ are the closures of bounded sub-domains of $Y$ with uniform $C^1$ boundaries.

For $\e, \eta\in (0, 1]$,
let $A_p(\e, \eta)$, $B_p(\e, \eta)$, $C_p(\e, \eta)$ and $D_p(\e, \eta)$ be defined by 
\eqref{A-p}, \eqref{B-p}, \eqref{C-p} and \eqref{D-p}, but with $\Omega_{\e, \eta}$ in the place of $\omega_{\e, \eta}$.

\begin{lemma}\label{lemma-b}
Let $\Omega_{\e, \eta}$ be given by \eqref{Omega} and $1< p< \infty$.
For any $f\in L^p(\Omega_{\e, \eta}; \R^d)$ and $F\in L^p(\Omega_{\e, \eta})$, 
the Dirichlet problem \eqref{D-01} has a unique solution in $W^{1, p}_0(\Omega)$.
Moreover, the solution satisfies the estimates,
\begin{equation}\label{b1}
\|\nabla u\|_{L^p(\Omega_{\e, \eta})}
\le A_p (\e, \eta) \| f\|_{L^p(\Omega_{\e, \eta})}
+ B_p (\e, \eta) \| F\|_{L^p(\Omega_{\e, \eta})},
\end{equation}
and
\begin{equation}\label{b2}
\|  u\|_{L^p(\Omega_{\e, \eta})}
\le C_p (\e, \eta) \| f\|_{L^p(\Omega_{\e, \eta})}
+ D_p (\e, \eta) \| F\|_{L^p(\Omega_{\e, \eta})}.
\end{equation}
\end{lemma}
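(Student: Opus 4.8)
The plan is to reduce the statement to two ingredients that are already available: the Jerison--Kenig $W^{1,p}$ theory for $C^1$ domains (Theorem~\ref{JK-thm}) and the definitions \eqref{A-p}--\eqref{D-p} of the operator norms, now read with $\Omega_{\e,\eta}$ in place of $\omega_{\e,\eta}$. The role of the lemma is precisely to separate the soft part (existence, uniqueness, and the \emph{form} of the estimates) from the hard part (the actual size of $A_p(\e,\eta),\dots,D_p(\e,\eta)$), which is taken up afterwards by adapting the arguments of Sections~\ref{section-G}--\ref{section-GU}.

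First I would note that, for each fixed $\e,\eta\in(0,1]$, the set $\Omega_{\e,\eta}$ is itself a bounded $C^1$ domain. Condition \eqref{condition-0}, together with $T_k\subset Y$, forces each scaled hole $\e(k+\eta T_k)$ to lie at a positive distance from the faces of its cube $\e(k+Y)$, hence from the neighbouring holes and, since only cubes with $\e(k+Y)\subset\Omega$ are removed, from $\partial\Omega$ as well; thus $\partial\Omega_{\e,\eta}$ is a finite disjoint union of $C^1$ hypersurfaces, and $\Omega_{\e,\eta}$ is connected because each $Y\setminus T_k$ is. Given $f\in L^p(\Omega_{\e,\eta};\R^d)$ and $F\in L^p(\Omega_{\e,\eta})$, the functional $\phi\mapsto\int_{\Omega_{\e,\eta}}(F\phi-f\cdot\nabla\phi)\,dx$ defines an element of $W^{-1,p}(\Omega_{\e,\eta})$ with norm $\le C(\|F\|_{L^p}+\|f\|_{L^p})$, by Poincar\'e's inequality on the bounded set $\Omega_{\e,\eta}$. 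Hence Theorem~\ref{JK-thm}, applied to the $C^1$ domain $\Omega_{\e,\eta}$, produces a unique solution $u\in W^{1,p}_0(\Omega_{\e,\eta})$ of \eqref{D-01}, together with a (non-uniform) a priori bound $\|\nabla u\|_{L^p(\Omega_{\e,\eta})}\le C(\e,\eta)\{\|F\|_{L^p}+\|f\|_{L^p}\}$. This disposes of existence and uniqueness; only the \emph{uniform} form \eqref{b1}--\eqref{b2} remains.

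Next I would establish \eqref{b1}--\eqref{b2} first for data $F,f\in L^2(\Omega_{\e,\eta})\cap L^p(\Omega_{\e,\eta})$. For such data the $W^{1,2}_0$ solution provided by Lax--Milgram (via the Poincar\'e inequality \eqref{u-P}) coincides with the $W^{1,p}_0$ solution just constructed: since $\Omega_{\e,\eta}$ is bounded, the $W^{1,p}_0$ solution lies in $W^{1,2}_0(\Omega_{\e,\eta})$ when $p\ge2$, and the $W^{1,2}_0$ solution lies in $W^{1,p}_0(\Omega_{\e,\eta})$ when $p\le2$, so uniqueness of the solution in the relevant space identifies the two. Writing $u=u_f+u_F$, where $u_f,u_F\in W^{1,2}_0(\Omega_{\e,\eta})$ solve $-\Delta u_f=\text{\rm div}(f)$ and $-\Delta u_F=F$, the definitions of $A_p(\e,\eta),B_p(\e,\eta),C_p(\e,\eta),D_p(\e,\eta)$ give at once
\[
\|\nabla u_f\|_{L^p}\le A_p(\e,\eta)\|f\|_{L^p},\qquad \|\nabla u_F\|_{L^p}\le B_p(\e,\eta)\|F\|_{L^p},
\]
\[
\|u_f\|_{L^p}\le C_p(\e,\eta)\|f\|_{L^p},\qquad \|u_F\|_{L^p}\le D_p(\e,\eta)\|F\|_{L^p},
\]
and adding these proves \eqref{b1}--\eqref{b2} for $L^2\cap L^p$ data. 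A density argument then removes the hypothesis $F,f\in L^2$: since $\Omega_{\e,\eta}$ is bounded, $L^2\cap L^p$ is dense in $L^p$; choosing $F_j\to F$, $f_j\to f$ in $L^p$ with $F_j,f_j\in L^2\cap L^p$, and letting $u_j\in W^{1,p}_0(\Omega_{\e,\eta})$ be the associated solutions, linearity together with the non-uniform bound from Theorem~\ref{JK-thm} gives $u_j\to u$ in $W^{1,p}_0(\Omega_{\e,\eta})$, and passing to the limit in \eqref{b1}--\eqref{b2} (valid for each $u_j$) yields the same estimates for $u$.

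I do not expect any analytic obstacle in the above; the only minor care points are the identification of the $W^{1,2}_0$ and $W^{1,p}_0$ solutions for $L^2\cap L^p$ data and the limiting bookkeeping. Rather, the genuinely substantial work lies \emph{after} this lemma: it is only useful once $A_p(\e,\eta),\dots,D_p(\e,\eta)$ for $\Omega_{\e,\eta}$ are shown to satisfy \eqref{Ape}--\eqref{Dpe}, which requires transplanting Theorems~\ref{thm-3.0}, \ref{thm-A-p} and \ref{thm-B-p} to the bounded perforated domain and using in addition the Poincar\'e inequality on $\Omega$ to produce the factor $\min((\e\eta)^{-2},\eta^{-d})$ (cf.\ Theorem~\ref{thm-3.1}).
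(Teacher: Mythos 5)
Your proposal is correct and follows essentially the same route as the paper: the paper's proof likewise invokes the Jerison--Kenig $W^{1,p}$ theory on the bounded $C^1$ domain $\Omega_{\e,\eta}$ for existence and uniqueness, and obtains \eqref{b1}--\eqref{b2} by linearity and a density argument. Your additional details (identification of the $W^{1,2}_0$ and $W^{1,p}_0$ solutions for $L^2\cap L^p$ data and the limiting step using the non-uniform JK bound) simply make explicit what the paper leaves implicit.
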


\begin{proof}
Since $\Omega_{\e, \eta}$ is a bounded $C^1$ domain, the existence and uniqueness of solutions for \eqref{D-01}
are given by \cite{JK}.
The estimates \eqref{b1} and \eqref{b2} follow by linearity and a density argument.
\end{proof}

\begin{lemma}\label{lemma-D11}
The three equations  in Lemma \ref{lemma-U-2} continue to hold for the domain  $\Omega_{\e, \eta}$.
\end{lemma}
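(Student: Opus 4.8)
The plan is to repeat the duality argument used for Lemma~\ref{lemma-U-2}, observing that that argument uses nothing about $\omega_{\e,\eta}$ beyond the unique solvability of the $L^2$-Dirichlet problem in the perforated domain and the symmetry of the Dirichlet form $(u,w)\mapsto\int\nabla u\cdot\nabla w$, both of which carry over to the bounded perforated domain $\Omega_{\e,\eta}$. Concretely, by the Poincar\'e inequality \eqref{u-P} and the Lax--Milgram theorem the Dirichlet problem \eqref{D-01} with data $f\in L^2(\Omega_{\e,\eta};\R^d)$, $F\in L^2(\Omega_{\e,\eta})$ has a unique weak solution in $W^{1,2}_0(\Omega_{\e,\eta})$, while by \cite{JK} (cf.\ Lemma~\ref{lemma-b}) it has a unique solution in $W^{1,p}_0(\Omega_{\e,\eta})$ for every $1<p<\infty$; this is all the well-posedness the duality argument needs.

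First I would fix $1<p<\infty$ and take data $f,F$ in $L^p(\Omega_{\e,\eta})\cap L^2(\Omega_{\e,\eta})$ (vector-valued for $f$) and $g,G$ in $L^{p'}(\Omega_{\e,\eta})\cap L^2(\Omega_{\e,\eta})$, and let $u,w\in W^{1,2}_0(\Omega_{\e,\eta})$ solve $-\Delta u=F+\text{\rm div}(f)$ and $-\Delta w=G+\text{\rm div}(g)$. Using $w$ as a test function in the weak formulation for $u$ and $u$ as a test function in that for $w$ yields the identity
\[
\int_{\Omega_{\e,\eta}} u\,G\,dx-\int_{\Omega_{\e,\eta}}\nabla u\cdot g\,dx
=\int_{\Omega_{\e,\eta}}\nabla u\cdot\nabla w\,dx
=\int_{\Omega_{\e,\eta}} w\,F\,dx-\int_{\Omega_{\e,\eta}}\nabla w\cdot f\,dx,
\]
precisely as in the proof of Lemma~\ref{lemma-U-2}. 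I would then read off the three relations from this identity by specializing the data and taking suprema over $g$, resp.\ $G$, ranging over a dense subset of the unit ball of $L^{p'}(\Omega_{\e,\eta})$, resp.\ $L^p(\Omega_{\e,\eta})$: setting $F=G=0$ gives $A_p(\e,\eta)\le A_{p'}(\e,\eta)$; setting $f=G=0$ gives $B_p(\e,\eta)\le C_{p'}(\e,\eta)$; setting $F=g=0$ gives $C_p(\e,\eta)\le B_{p'}(\e,\eta)$; and setting $f=g=0$ gives $D_p(\e,\eta)\le D_{p'}(\e,\eta)$. Interchanging the roles of $p$ and $p'$ in these four inequalities produces the reverse inequalities, hence the equalities $A_p(\e,\eta)=A_{p'}(\e,\eta)$, $B_p(\e,\eta)=C_{p'}(\e,\eta)$ and $D_p(\e,\eta)=D_{p'}(\e,\eta)$.

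I do not expect a genuine obstacle here, since every step is a line-for-line transcription of the proof of Lemma~\ref{lemma-U-2}. The one point worth a sentence of care is that the quantities $A_p,B_p,C_p,D_p$ are defined in \eqref{A-p}--\eqref{D-p} through data in $L^p\cap L^2$, so in the argument above I must (i) invoke density of $L^p(\Omega_{\e,\eta})\cap L^2(\Omega_{\e,\eta})$ in $L^p(\Omega_{\e,\eta})$ — immediate because $\Omega_{\e,\eta}$ is bounded — in order to realize $\|\nabla u\|_{L^p}$ and $\|u\|_{L^p}$ by duality against such $g$, $G$, and (ii) note that the comparison solution $w$, built from data in $L^{p'}\cap L^2$, is itself admissible in the defining supremum for $A_{p'}$, $B_{p'}$ or $D_{p'}$, which it is by construction. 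With these observations the argument is complete.
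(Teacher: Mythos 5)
Your proposal is correct and is exactly the paper's route: the paper's proof of this lemma simply notes that the duality argument of Lemma \ref{lemma-U-2} applies verbatim to $\Omega_{\e,\eta}$, which is what you carry out in detail (well-posedness via \eqref{u-P} and Lax--Milgram, the symmetric test-function identity, and extraction of the three equalities by specializing data and dualizing). Your added remarks on density and admissibility of the comparison solution are fine but not needed beyond what the paper takes for granted.
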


\begin{proof}
The same duality argument for $ \omega_{\e, \eta}$  works equally well for $\Omega_{\e, \eta}$.
\end{proof}

\begin{thm}\label{thm-D12}
Let $C_p(\e, \eta)$ and $D_p(\e, \eta)$ be defined by $\eqref{C-p}$ and \eqref{D-p}, respectively, but with 
$\Omega_{\e, \eta}$ in the place of $\omega_{\e, \eta}$. Then

\begin{enumerate}

\item

For $2\le  p< \infty$,
\begin{equation}\label{D13}
C_p (\e, \eta) \le 
\left\{ 
\aligned
 & C \min (1, \e \eta^{\frac{2-d}{2}}) & \quad &\text{ if } d\ge 3,\\
 & C \min (1, \e |\ln (\eta/2)|^{1/2}) & \quad & \text{ if } d=2.
 \endaligned
 \right.
 \end{equation}
 
 \item
 
 For  $1< p< \infty$,
 \begin{equation}\label{D14}
 D_p (\e,  \eta)
 \le 
 \left\{
 \aligned
 & C \min (1, \e^2 \eta^{2-d}) & \quad  & \text{ if } d\ge 3,\\
 & C \min (1, \e^2 |\ln (\eta/2)| ) & \quad  & \text{ if } d=2.
 \endaligned
 \right.
 \end{equation}
\end{enumerate}
The constants $C$ depend on $d$, $p$, $c_0$ and $\Omega$.
\end{thm}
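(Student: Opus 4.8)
The plan is to mirror the proof of Theorem \ref{thm-D}, replacing the unbounded domain $\omega_{\e,\eta}$ by $\Omega_{\e,\eta}$ throughout and invoking the bounded-domain counterparts of the relevant ingredients, all of which were already established in Section \ref{section-G}. In particular, the analytic heart of the matter—the Poincar\'e inequality on $\Omega_{\e,\eta}$ of Remark \ref{remark-3.1}, the Moser-type iteration with test functions $v_\ell$ carried out in Theorem \ref{thm-3.1}, and the duality identities of Lemma \ref{lemma-D11}—is already available, so no new estimate needs to be proved.

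First I would treat the range $2\le p<\infty$ for both quantities. By definition, $C_p(\e,\eta)$ is the best constant in $\|u\|_{L^p(\Omega_{\e,\eta})}\le C_p(\e,\eta)\|f\|_{L^p(\Omega_{\e,\eta})}$ over weak solutions of $-\Delta u=\mathrm{div}(f)$ in $\Omega_{\e,\eta}$, which is precisely the $f$-contribution in the estimate \eqref{g-3.1-0} (and in \eqref{g-3.1-1} when $d=2$) supplied by Theorem \ref{thm-3.1}; this gives \eqref{D13} at once. Likewise $D_p(\e,\eta)$ is the best constant in the bound of $\|u\|_{L^p(\Omega_{\e,\eta})}$ by $\|F\|_{L^p(\Omega_{\e,\eta})}$ for $-\Delta u=F$, which is the $F$-contribution in Theorem \ref{thm-3.1}, so \eqref{D14} follows for $2\le p<\infty$.

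It remains to handle $1<p<2$ in \eqref{D14}. Here I would invoke the identity $D_p(\e,\eta)=D_{p'}(\e,\eta)$ for $\Omega_{\e,\eta}$, which is part of Lemma \ref{lemma-D11}. Since $p'=p/(p-1)>2$, the bound for $D_{p'}(\e,\eta)$ has just been proved, and because the right-hand sides of \eqref{D14} are independent of $p$, the claimed bound for $D_p(\e,\eta)$ is immediate. (Note that the theorem only asserts the bound on $C_p(\e,\eta)$ for $2\le p<\infty$, so no analogous extension of \eqref{D13} to $p<2$ is required; the full range for $C_p$ enters only later, in Theorem \ref{main-thm-3}, via the gradient estimates.)

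I do not anticipate any genuine obstacle. The only points requiring a word of care are purely bookkeeping: verifying that Theorem \ref{thm-3.1} is being applied with the correct matching of its two terms to the definitions \eqref{C-p} and \eqref{D-p} adapted to $\Omega_{\e,\eta}$, and checking that the duality step of Lemma \ref{lemma-D11} is valid for the bounded domain (which it is, since the duality pairing argument of Lemma \ref{lemma-U-2} is insensitive to boundedness of the domain).
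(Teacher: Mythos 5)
Your proposal is correct and follows essentially the same route as the paper: the case $2\le p<\infty$ is read off directly from Theorem \ref{thm-3.1} (matching the $f$- and $F$-terms to the definitions of $C_p$ and $D_p$), and the case $1<p<2$ for $D_p$ is obtained from the duality identity $D_p(\e,\eta)=D_{p'}(\e,\eta)$ of Lemma \ref{lemma-D11}. No gaps.
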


\begin{proof}
The case $2\le p <\infty$ is given by Theorem \ref{thm-3.1}, while 
the case $1<p< 2$ for $D_p(\e, \eta)$ follows from the fact $D_p(\e, \eta)=D_{p^\prime}(\e, \eta)$.
Note that the $C^1$ conditions for $\Omega$ and $\{T_k\}$ are not needed for \eqref{D13} and \eqref{D14}.
\end{proof}

\begin{lemma}
Let $u\in W^{1, 2}_0(\Omega_{\e, \eta}) $ be a weak solution of the Dirichlet problem \eqref{D-01}, where
$F\in L^p(\Omega_{\e, \eta})$ and $f\in L^p(\Omega_{\e, \eta}; \R^d)$ for some $2< p< \infty$.
Then
\begin{equation}\label{L-O}
\| \nabla u\|_{L^p(\Omega_{\e, \eta})}
\le C \left\{
(\e\eta)^{-1} \| u\|_{L^p(\Omega_{\e, \eta})}
+ \| F \|_{L^p(\Omega_{\e, \eta})}
+ \| f\|_{L^p(\Omega_{\e, \eta})} \right\},
\end{equation}
where $C$ depends on $d$, $p$, $\Omega$ and $\{T_k\}$.
\end{lemma}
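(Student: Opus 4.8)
The plan is to prove \eqref{L-O} by a localization argument at the scale $\e$, patching together three kinds of local estimates. By Theorem \ref{thm-3.1} we already know $u\in L^p(\Omega_{\e,\eta})$, so the right-hand side of \eqref{L-O} is finite and it suffices to bound $\|\nabla u\|_{L^p}$. The first step is a geometric observation: since \eqref{condition-0} forces $T_k\subset\{t\in Y:\mathrm{dist}(t,\partial Y)\ge c_0\}$ and $\eta\le 1$, each hole $\e(k+\eta T_k)$ lies at distance $\ge c_0\e$ from $\partial(\e(k+Y))$; as holes only occur in cubes $\e(k+Y)\subset\Omega$, it follows that every hole is at distance $\ge c_0\e$ from $\partial\Omega$ and $\ge 2c_0\e$ from every other hole. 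Thus $\partial\Omega_{\e,\eta}=\partial\Omega\cup\bigcup_k\partial(\e(k+\eta T_k))$ is a union of mutually $c_0\e$-separated pieces, so any ball of radius less than $c_0\e$ meets at most one of them.

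Next I would fix a bounded-overlap covering $\{B(y_j,r)\}_j$ of $\overline{\Omega}$ with $r=c_0\e/8$, so that the enlarged balls $\{B(y_j,4r)\}$ also have bounded overlap and each $B(y_j,4r)$ meets at most one component of $\partial\Omega_{\e,\eta}$. For each $j$ one takes a cutoff $\varphi_j$ with $\varphi_j=1$ on $B(y_j,r)$, $\mathrm{supp}\,\varphi_j\subset B(y_j,2r)$, $|\nabla\varphi_j|\lesssim\e^{-1}$, $|\nabla^2\varphi_j|\lesssim\e^{-2}$, and distinguishes three cases. (i) If $B(y_j,4r)$ meets no component of $\partial\Omega_{\e,\eta}$, then $B(y_j,4r)\subset\Omega_{\e,\eta}$, and the interior $W^{1,p}$ estimate for $-\Delta$ applied to $u\varphi_j$, together with $\|F+\mathrm{div}(f)\|_{W^{-1,p}(B(y_j,2r))}\le C(\e\|F\|_{L^p}+\|f\|_{L^p})$, gives $\|\nabla u\|_{L^p(B(y_j,r))}\le C\{\e^{-1}\|u\|_{L^p(B(y_j,2r))}+\e\|F\|_{L^p}+\|f\|_{L^p}\}$. (ii) If $B(y_j,4r)$ meets $\partial\Omega$ and no hole, then $\Omega_{\e,\eta}$ coincides with $\Omega$ near $B(y_j,4r)$; since $\Omega$ is $C^1$, Theorem \ref{JK-thm} (localized by the same cutoff, after rescaling $B(y_j,4r)$ to unit size, where $\partial\Omega$ has uniformly controlled $C^1$ character) gives the same bound with $B(y_j,\cdot)\cap\Omega$ in place of $B(y_j,\cdot)$. (iii) If $B(y_j,4r)$ meets a unique hole $\e(k+\eta T_k)$ and no part of $\partial\Omega$, then $\Omega_{\e,\eta}$ equals $\R^d\setminus\e(k+\eta T_k)$ near $B(y_j,4r)$, $u$ vanishes on that hole's boundary, and the cutoff reduces matters to a function supported near the hole; applying (a rescaled, ball-shaped version of) Lemma \ref{local-lemma-1} — the $c_0\e$-wide collar separating the hole from the rest of $\Omega$ supplies the room, and the hole's own scale $\e\eta$ produces the factor $(\e\eta)^{-1}$ — yields $\|\nabla u\|_{L^p(B(y_j,r)\cap\Omega_{\e,\eta})}\le C\{(\e\eta)^{-1}\|u\|_{L^p(B(y_j,4r)\cap\Omega_{\e,\eta})}+\e\|F\|_{L^p}+\|f\|_{L^p}\}$. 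Since $(\e\eta)^{-1}\ge\e^{-1}$, case (iii) dominates (i) and (ii).

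Raising these to the $p$-th power, summing over $j$, and using that $\{B(y_j,r)\}$ covers $\Omega_{\e,\eta}$ while $\{B(y_j,4r)\}$ has bounded overlap, one obtains $\|\nabla u\|_{L^p(\Omega_{\e,\eta})}^p\le C\{(\e\eta)^{-p}\|u\|_{L^p(\Omega_{\e,\eta})}^p+\e^p\|F\|_{L^p(\Omega_{\e,\eta})}^p+\|f\|_{L^p(\Omega_{\e,\eta})}^p\}$, and taking $p$-th roots and using $\e\le 1$ gives \eqref{L-O}. I expect the main technical point to be case (iii): deriving the local estimate near a single hole with exactly the factor $(\e\eta)^{-1}$ when $\eta$ is small, so that the ball $B(y_j,4r)$ is much larger than the hole; this forces one to keep track of the two scales $\e$ and $\e\eta$ simultaneously and is, in essence, a localized restatement of Lemma \ref{local-lemma-1}. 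A secondary point is verifying the separation claim of the first paragraph and the uniformity, as $\e\to0$, of the $C^1$ constants for $\Omega$ at scale $\e$ entering case (ii).
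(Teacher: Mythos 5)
Your argument is sound and reaches \eqref{L-O}, but it takes a genuinely different route from the paper. You cover $\Omega_{\e,\eta}$ at the scale $\e$ (balls of radius $\sim c_0\e$ with bounded overlap), split into interior balls, balls meeting $\partial\Omega$ only, and balls meeting a single hole only, and near a hole you produce the factor $(\e\eta)^{-1}$ by invoking a rescaled, ball-shaped variant of Lemma \ref{local-lemma-1}; your separation claims (each hole is at distance $\ge c_0\e$ from the faces of its cube, hence from the other holes and from $\partial\Omega$) are correct and are exactly what makes the trichotomy and the use of a $c_0\e$-collar around the hole legitimate. The paper instead localizes at the single finer scale $c_1\e\eta$, $c_1=c_0/100$: for every $x_0\in\Omega_{\e,\eta}$ it proves the pointwise estimate \eqref{L-01} with only two cases --- either $B(x_0,2c_1\e\eta)\subset\Omega_{\e,\eta}$ (interior estimate) or the ball meets $\partial\Omega_{\e,\eta}$, in which case at scale $\e\eta$ the nearby boundary is a single uniformly $C^1$ piece (a hole boundary rescales to $\partial T_k$, and $\partial\Omega$ only flattens under zooming in), so the localized estimate \eqref{Jk-F} applies directly --- and then deduces \eqref{L-O} by integrating \eqref{L-01} in $x_0$ over $\Omega_{\e,\eta}$ and Fubini, rather than by a covering sum. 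What the paper's choice of scale buys is that the factor $(\e\eta)^{-1}$ is simply the cutoff scale and no two-scale analysis is needed; what your route buys is the reuse of Lemma \ref{local-lemma-1}, at the cost of having to handle both regimes in your case (iii): when $\eta\sim 1$ the ball sees only a $C^1$ portion of a large hole and your ``ball-shaped lemma'' degenerates to a plain boundary estimate at scale $\e\eta\approx\e$, while for $\eta\ll 1$ the whole hole sits inside the ball and you are in the genuine configuration of Lemma \ref{local-lemma-1} (whose proof does adapt to ball-shaped neighborhoods). Two small repairs: with $4r=c_0\e/2$ the enlarged balls have diameter exactly $c_0\e$, equal to your separation bound, so a ball could in a borderline case touch both a hole and $\partial\Omega$; take $r=c_0\e/16$, say. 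And when transferring Lemma \ref{local-lemma-1} to scale $\e$, record the rescaled data $\e^2F(\e\cdot)$ and $\e f(\e\cdot)$, which is where the harmless factor $\e\|F\|_{L^p}\le\|F\|_{L^p}$ in your local bounds comes from.
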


\begin{proof}
We claim that 
for any $x_0 \in \Omega_{\e, \eta}$,
\begin{equation}\label{L-01}
\int_{B(x_0, c_1\e \eta)\cap \Omega_{\e, \eta}}  |\nabla u|^p\, dx
\le C \int_{B(x_0, 8 c_1 \e \eta) \cap \Omega_{\e, \eta}}
\left( (\e\eta)^{-p} | u|^p + |F|^p + |f|^p \right)\, dx,
\end{equation}
where $c_1= (c_0/100)$.
To see this, 
we consider two cases.
In  the first case we assume $B(x_0, 2c_1\e \eta)\subset \Omega_{\e, \eta}$.
The estimate \eqref{L-01} then follows by the standard interior estimates for Laplace's equation.
In the second case we assume $B(x_0, 2c_1\e \eta)\cap \partial \Omega_{\e, \eta}\neq \emptyset$.
Choose $y_0\in B(x_0,2 c_1 \e \eta) \cap  \partial \Omega_{\e, \eta}$.
Then $B(x_0, c_1 \e \eta)\subset B(y_0, 3c_1 \e \eta)$.
Since $\partial\Omega$ is a $C^1$ domain and $\{T_k\}$ are the closures of bounded domains with uniform $C^1$ boundaries,
it follows from \eqref{Jk-F} by a localization argument that
\begin{equation}\label{L-02}
\int_{B(y_0, 3c_1\e \eta)\cap \Omega_{\e, \eta}}  |\nabla u|^p\, dx
\le C \int_{B(y_0, 6 c_1 \e \eta) \cap \Omega_{\e, \eta}}
\left( (\e\eta)^{-p} | u|^p + |F|^p + |f|^p \right)\, dx, 
\end{equation}
where we have used the fact $u=0$ on $\partial \Omega_{\e, \eta}$.
Note that  $B(y_0, 6c_1 \e \eta)\subset B(x_0, 8c_1\e \eta)$.
As a result, \eqref{L-01} follows from \eqref{L-02}.
Finally, we obtain \eqref{L-O} by integrating both sides of  \eqref{L-01} in $x_0$ over $\Omega_{\e, \eta}$
and using Fubini's Theorem.
\end{proof}

\begin{thm}\label{thm-D13}
Let $B_p(\e, \eta)$ be defined by \eqref{B-p}, but with $\Omega_{\e, \eta}$ in the place of $\omega_{\e, \eta}$.
Then

\begin{enumerate}

\item

For $1< p\le 2$,
\begin{equation}\label{D15}
B_p (\e, \eta) \le
\left\{ 
\aligned
 & C \min (1, \e \eta^{\frac{2-d}{2}}) & \quad &\text{ if } d\ge 3,\\
 & C \min (1, \e |\ln (\eta/2)|^{1/2}) & \quad & \text{ if } d=2,
 \endaligned
 \right.
 \end{equation}
 where $C$ depends on $d$, $p$, $c_0$ and $\Omega$.
 
 \item
 
 For $2< p< \infty$ and $d\ge 2$,
 \begin{equation}\label{D16}
 B_p (\e, \eta)\le
  C_\delta \, \e\eta
 \big\{ \min ( (\e \eta)^{-1}, \eta^{-\frac{d}{2}}) \big\}^{2-\frac{2}{p}+\delta}
 \end{equation}
 for any $\delta\in  (0, 1)$, where $C_\delta$ depends on $d$, $p$, $\delta$, $\{T_k\}$ and $\Omega$.
\end{enumerate}
\end{thm}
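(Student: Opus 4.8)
The plan has two parts. Part (1) is immediate: by Lemma \ref{lemma-D11} one has $B_p(\e,\eta)=C_{p'}(\e,\eta)$, and for $1<p\le2$ the conjugate exponent satisfies $2\le p'<\infty$, so \eqref{D15} is exactly the bound \eqref{D13} of Theorem \ref{thm-D12} read at the exponent $p'$. For part (2) I would argue by interpolation. Fix $\e,\eta\in(0,1]$ and set $M=\min\{(\e\eta)^{-1},\eta^{-d/2}\}$; note $M\ge1$, that $M=\eta^{-d/2}$ when $d=2$ (since $(\e\eta)^{-1}\ge\eta^{-1}$ there), and the elementary identities $\e\eta M=\min\{1,\e\eta^{1-d/2}\}=\min\{1,\sigma_\e\}$ and $\e\eta M^{2}=\min\{(\e\eta)^{-1},\e\eta^{1-d}\}=(\e\eta)^{-1}\min\{1,\e^{2}\eta^{2-d}\}$. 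Let $T\colon F\mapsto\nabla u$ be the linear operator sending $F$ to the gradient of the $W_0^{1,2}(\Omega_{\e,\eta})$-solution of $-\Delta u=F$ in $\Omega_{\e,\eta}$; by definition $B_q(\e,\eta)$ is the norm of $T$ acting on $L^q(\Omega_{\e,\eta})$ for $2\le q<\infty$, and the measure space does not vary with $q$, so the Riesz--Thorin theorem applies. The idea is to bound $B_2(\e,\eta)$ and $B_{p_0}(\e,\eta)$ for a large $p_0$ and then interpolate.

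For the endpoint $q=2$, Lemma \ref{lemma-D11} and Theorem \ref{thm-D12}(1) give $B_2(\e,\eta)=C_2(\e,\eta)\le C\min\{1,\e\eta^{(2-d)/2}\}=C\,\e\eta M$ when $d\ge3$, and $\le C\min\{1,\e|\ln(\eta/2)|^{1/2}\}$ when $d=2$. For the endpoint $q=p_0$ I would run the localization argument behind \eqref{L-O} with $f=0$, but retaining the natural $\e\eta$ weight on the inhomogeneous term: estimating $\|F\|_{L^{q_0}(B_r)}\le C r\,\|F\|_{L^{p_0}(B_r)}$ by H\"older on balls of radius $r\sim\e\eta$ (with $\frac{1}{q_0}=\frac{1}{p_0}+\frac{1}{d}$) inside the Sobolev bound $\|\cdot\|_{W^{-1,p_0}}\le C\|\cdot\|_{L^{q_0}}$ of Remark \ref{remark-2.1} yields $\|\nabla u\|_{L^{p_0}(\Omega_{\e,\eta})}\le C\big\{(\e\eta)^{-1}\|u\|_{L^{p_0}(\Omega_{\e,\eta})}+\e\eta\,\|F\|_{L^{p_0}(\Omega_{\e,\eta})}\big\}$. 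Inserting $\|u\|_{L^{p_0}}\le D_{p_0}(\e,\eta)\|F\|_{L^{p_0}}\le C\min\{1,\e^{2}\eta^{2-d}\}\|F\|_{L^{p_0}}$ from Theorem \ref{thm-D12}(2) and using $\e\eta\le\e\eta M^{2}$ gives $B_{p_0}(\e,\eta)\le C\big\{(\e\eta)^{-1}\min(1,\e^{2}\eta^{2-d})+\e\eta\big\}\le C\,\e\eta M^{2}$ for $d\ge3$; for $d=2$ the same computation with $D_{p_0}\le C\min\{1,\e^{2}|\ln(\eta/2)|\}$ produces the same bound up to a factor $|\ln(\eta/2)|$.

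Riesz--Thorin interpolation of $T$ between $L^{2}$ and $L^{p_0}$ then gives, for $2<p<p_0$ and $\theta=\frac{1/2-1/p}{1/2-1/p_0}\in(0,1)$, the estimate $B_p(\e,\eta)\le B_2(\e,\eta)^{1-\theta}B_{p_0}(\e,\eta)^{\theta}\le C\,(\e\eta M)^{1-\theta}(\e\eta M^{2})^{\theta}=C\,\e\eta\,M^{1+\theta}$. Since $\theta\to1-\frac2p$ as $p_0\to\infty$ and $M\ge1$, given $\delta\in(0,1)$ one chooses $p_0$ so large that $1+\theta<2-\frac2p+\delta$, and then $M^{1+\theta}\le M^{2-\frac2p+\delta}$; this is precisely \eqref{D16} for $d\ge3$. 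For $d=2$ the argument is identical once the logarithmic factors accumulated above are absorbed into $M^{\delta}$ through $|\ln(\eta/2)|\le C_\delta\,\eta^{-\delta}$, and the easy range $\eta\in[1/4,1]$, where all quantities are comparable to absolute constants, is checked by hand.

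The step I expect to require the most care is the endpoint bound at $q=p_0$: one must use the \emph{sharpened} form of the local estimate, with the $\e\eta$ weight on $F$, and not merely the crude inequality \eqref{L-O}. Indeed, in the large-holes regime with small $\e$ the target value $\e\eta M^{2}=\e\eta^{1-d}$ is strictly less than $1$, so the crude estimate would only yield $B_{p_0}\lesssim1$ and the interpolation would fall short of \eqref{D16}. Once that point is handled, the remainder is bookkeeping: matching the two endpoint exponents using $M\ge1$ and the identities relating $\e\eta M$ and $\e\eta M^{2}$ to $\min\{1,\sigma_\e\}$ and to $(\e\eta)^{-1}\min\{1,\e^{2}\eta^{2-d}\}$, respectively.
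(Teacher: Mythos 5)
Your argument is correct and follows essentially the same route as the paper: part (1) via the duality $B_p(\e,\eta)=C_{p^\prime}(\e,\eta)$ together with Theorem \ref{thm-D12}, and part (2) by combining a local gradient estimate at scale $\e\eta$ with the $L^{p_0}$ bound on $u$ from Theorem \ref{thm-D12}, then interpolating with the $L^2$ bound by Riesz--Thorin and letting $p_0\to\infty$ at the cost of $\delta$ (with logarithms absorbed into $\eta^{-\delta}$ when $d=2$). The one genuine difference is the point you flag yourself: you retain the natural scaling weight on the source term, i.e. $\|\nabla u\|_{L^{p_0}(\Omega_{\e,\eta})}\le C\{(\e\eta)^{-1}\|u\|_{L^{p_0}}+\e\eta\|F\|_{L^{p_0}}\}$, coming from the H\"older gain $\|F\|_{L^{q_0}(B_r)}\le C r\|F\|_{L^{p_0}(B_r)}$ with $r\simeq\e\eta$ (this is exactly the mechanism behind the $\eta$-weight in \eqref{local-1-4}). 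The paper instead quotes the unweighted inequality \eqref{L-O} and then asserts \eqref{4.3-3}, which silently discards the resulting additive term $C\|F\|_{L^p}$; since $(\e\eta)^{-1}\min(1,\e^2\eta^{2-d})=\min((\e\eta)^{-1},\e\eta^{1-d})$ can be much smaller than $1$ (e.g.\ $\e=\eta^{d}$), that term is not absorbable there, and, as you correctly observe, the crude endpoint $B_{p_0}(\e,\eta)\lesssim 1$ would fall short of \eqref{D16} after interpolation in the large-hole, very small $\e$ regime. Your weighted version makes \eqref{4.3-3} legitimate, because $\e\eta\le\min((\e\eta)^{-1},\e\eta^{1-d})$, so your write-up is, if anything, slightly more careful than the paper's at this step; the remaining ingredients (the endpoint $B_2=C_2\le C\min(1,\e\eta^{\frac{2-d}{2}})$, the bookkeeping with $M=\min((\e\eta)^{-1},\eta^{-d/2})\ge 1$ and $\theta\to 1-\frac{2}{p}$, and the $d=2$ case) coincide with the paper's proof.
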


\begin{proof}
Since $B_p(\e, \eta)= C_{p^\prime} (\e, \eta)$ by Lemma \ref{lemma-D11},
the estimate \eqref{D15} follows from \eqref{D13}.
To prove \eqref{D16}, we first consider the case $d\ge 3$.
Let $u\in W^{1, 2}_0(\Omega_{\e, \eta})$ be a weak solution of
$-\Delta u=F$ in $\Omega_{\e, \eta}$, where $F\in L^p(\Omega_{\e, \eta})$.
By \eqref{D13} and \eqref{L-O} we obtain 
\begin{equation}\label{4.3-3}
\| \nabla u\|_{L^p(\Omega_{\e, \eta})}
\le C (\e \eta)^{-1} \big\{ \min (1, \e \eta^{\frac{2-d}{2}}) \big\}^2 \| F \|_{L^p(\Omega_{\e, \eta})}
\end{equation}
for any $p>2$.
By Riesz-Thorin Theorem, 
this, together with the $L^2$ estimate,
$$
\|\nabla u\|_{L^2(\Omega_{\e, \eta})}
\le C \min (1, \e \eta^{\frac{2-d}{2}}) \| F\|_{L^2(\Omega_{\e, \eta})},
$$
yields 
$$
\| \nabla u \|_{L^p(\Omega_{\e, \eta})}
\le C \e \eta
\big\{ \min ( (\e\eta)^{-1}, \eta^{-\frac{d}{2}} ) \big\}^{1+t}  \|F\|_{L^p(\Omega_{\e, \eta})},
$$
where
$$
t=\frac{\frac{1}{2} -\frac{1}{p}}{\frac12 -\frac{1}{p_0}}
$$
and $2<p< p_0$.
Since $t \to 1-\frac{2}{p}$ as $p_0 \to \infty$, by choosing $p_0$ sufficiently large, we obtain 
$$
\| \nabla u \|_{L^p(\Omega_{\e, \eta})}
\le C_\delta\, \e\,  \eta 
\big\{ \min ( (\e\eta)^{-1}, \eta^{-\frac{d}{2}} ) \big\}^{2- \frac{2}{p} +\delta} \| F\|_{L^p(\Omega_{\e, \eta})},
$$
for any $\delta\in (0, 1)$.
The proof for the case $d=2$ is similar (the factor $\ln (\eta/2)$ is absorbed by $\eta^{-\delta}$).
\end{proof}

\begin{thm}\label{thm-apd}
Let $A_p(\e, \eta)$ be defined by \eqref{A-p}, but with $\Omega_{\e, \eta}$ in the place of $\omega_{\e,\eta}$.
Then
\begin{equation}\label{D18}
A_p (\e, \eta)
\le 
 C_\delta   \big\{ \min ( (\e\eta)^{-2}, \eta^{-d} ) \big\}^{ |\frac12 -\frac{1}{p}| + \delta}
\end{equation}
for $d\ge 2$ and $1< p< \infty$, where $C_\delta$ depends on $d$, $p$, $\delta$, $\Omega$ and $\{T_k\}$.
\end{thm}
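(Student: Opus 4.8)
The plan is to follow the scheme already used for $\omega_{\e,\eta}$ in Theorem \ref{thm-A-p}: derive a crude bound for a large exponent $p_0$, then interpolate it against the $L^2$ energy estimate via the Riesz--Thorin theorem. By Lemma \ref{lemma-D11} we have $A_p(\e,\eta)=A_{p'}(\e,\eta)$, and $|\tfrac12-\tfrac1p|=|\tfrac12-\tfrac1{p'}|$, so it suffices to treat $2\le p<\infty$ and to recover $1<p<2$ by duality. For $p=2$, testing $-\Delta u=\text{\rm div}(f)$ against $u$ gives $\|\nabla u\|_{L^2(\Omega_{\e,\eta})}\le\|f\|_{L^2(\Omega_{\e,\eta})}$, i.e.\ $A_2(\e,\eta)\le 1$, and since $\e,\eta\le 1$ the base $\min((\e\eta)^{-2},\eta^{-d})$ is $\ge 1$, so \eqref{D18} holds trivially. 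Assume henceforth $p>2$.

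First I would record the bound for large $p_0$. Let $u\in W^{1,2}_0(\Omega_{\e,\eta})$ solve $-\Delta u=\text{\rm div}(f)$ with $f\in L^{p_0}\cap L^2$, $p_0>2$. Applying the gradient estimate \eqref{L-O} (with $F=0$) and then $C_{p_0}(\e,\eta)\le C\min(1,\e\eta^{1-d/2})$ from Theorem \ref{thm-D12} yields
$$\|\nabla u\|_{L^{p_0}(\Omega_{\e,\eta})}\le C\big\{(\e\eta)^{-1}\|u\|_{L^{p_0}(\Omega_{\e,\eta})}+\|f\|_{L^{p_0}(\Omega_{\e,\eta})}\big\}\le C\big\{(\e\eta)^{-1}\min(1,\e\eta^{1-d/2})+1\big\}\|f\|_{L^{p_0}(\Omega_{\e,\eta})}.$$
A short computation shows $(\e\eta)^{-1}\min(1,\e\eta^{1-d/2})=\min((\e\eta)^{-1},\eta^{-d/2})\ge 1$, so the bracket is $\le C\min((\e\eta)^{-1},\eta^{-d/2})$ and hence $A_{p_0}(\e,\eta)\le C\{\min((\e\eta)^{-2},\eta^{-d})\}^{1/2}$. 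In the case $d=2$ one uses instead $C_{p_0}(\e,\eta)\le C\min(1,\e|\ln(\eta/2)|^{1/2})$ and keeps the logarithm aside to be absorbed below.

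Then I would interpolate. The linear map $f\mapsto\nabla u$ is bounded on $L^2(\Omega_{\e,\eta})$ with norm $\le 1$ and on $L^{p_0}(\Omega_{\e,\eta})$ with norm $\le C\{\min((\e\eta)^{-2},\eta^{-d})\}^{1/2}$; by the Riesz--Thorin theorem, for $2<p<p_0$,
$$A_p(\e,\eta)\le C\{\min((\e\eta)^{-2},\eta^{-d})\}^{\theta/2},\qquad \theta=\frac{\tfrac12-\tfrac1p}{\tfrac12-\tfrac1{p_0}}.$$
Since $\theta/2\to\tfrac12-\tfrac1p=|\tfrac12-\tfrac1p|$ as $p_0\to\infty$, given $\delta\in(0,1)$ we choose $p_0$ so large that $\theta/2<|\tfrac12-\tfrac1p|+\delta$; as the base is $\ge 1$, enlarging the exponent only enlarges the quantity, which gives \eqref{D18} for $p>2$, and then for $1<p<2$ via $A_p=A_{p'}$. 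I do not anticipate a real obstacle here. The only points needing care are the elementary identity $(\e\eta)^{-1}\min(1,\e\eta^{1-d/2})=\min((\e\eta)^{-1},\eta^{-d/2})$ and the fact that $\{\min((\e\eta)^{-2},\eta^{-d})\}^\alpha=\min((\e\eta)^{-2\alpha},\eta^{-d\alpha})$ for $\alpha\ge 0$, together with tracking that the endpoint constant, which depends on the $\delta$-chosen $p_0$, produces the asserted $C_\delta$; for $d=2$ the factor $|\ln(\eta/2)|$ is absorbed into $\eta^{-\delta}$ exactly as in the proof of Theorem \ref{thm-A-p}.
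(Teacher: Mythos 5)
Your proposal is correct and follows the paper's own argument essentially verbatim: the large-exponent bound comes from combining the localization estimate \eqref{L-O} with the bound on $C_{p_0}(\e,\eta)$ from Theorem \ref{thm-D12}, then Riesz--Thorin interpolation against the $L^2$ energy estimate with $p_0\to\infty$ gives the exponent $|\tfrac12-\tfrac1p|+\delta$, with $1<p<2$ by duality and the logarithm absorbed into $\eta^{-\delta}$ when $d=2$. The extra elementary identities you flag (e.g.\ $(\e\eta)^{-1}\min(1,\e\eta^{1-d/2})=\min((\e\eta)^{-1},\eta^{-d/2})$) are exactly what the paper uses implicitly, so there is nothing to add.
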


\begin{proof}
Consider the case $d\ge 3$ and $2< p< \infty$.
Let $u\in W^{1, 2}_0 (\Omega_{\e, \eta})$ be a weak solution of
$-\Delta u =\text{\rm div}(f)$ in $\Omega_{\e, \eta}$, where
$f\in L^p(\Omega_{\e, \eta}; \R^d)$.
It follows from \eqref{L-O} and \eqref{D13} that 
$$
\|\nabla u \|_{L^p(\Omega_{\e, \eta})}
\le C (\e \eta)^{-1} \min (1, \e \eta^{\frac{2-d}{2}}) \| f\|_{L^p(\Omega_{\e, \eta})}.
$$
By interpolation, this, together with the $L^2$ estimate $\| \nabla u \|_{L^2(\Omega_{\e, \eta})} \le \| f\|_{L^2(\Omega_{\e, \eta})}$,
yields
$$
\| \nabla u \|_{L^p (\Omega_{\e, \eta})}
\le C \big\{  \min ((\e \eta)^{-1} ,  \eta^{-\frac{d}{2}} ) \big\}^t \| f\|_{L^p(\Omega_{\e, \eta})},
$$
where $t$ is the same as in the proof of Theorem \ref{thm-D13} and $2<p<p_0$.
Since $t \to 1-\frac{2}{p}$ as $p_0 \to \infty$, we obtain \eqref{D18} for $d\ge 3$.
The proof for the case $d=2$ is similar. 
\end{proof}

\begin{proof}[Proof of Theorem \ref{main-thm-3}]
This follows from Lemma \ref{lemma-b} with estimates for  $A_p(\e, \eta)$, $B_p(\e, \eta)$, and $D_p(\e, \eta)$ given by  Theorem \ref{thm-apd},
\ref{thm-D13} and \ref{thm-D12}, respectively.
The estimate for $C_p (\e, \eta)$ follows from Theorem \ref{thm-D13} and  the fact $C_p(\e, \eta)=B_{p^\prime} (\e, \eta)$.
\end{proof}


 \bibliographystyle{amsplain}
 
\bibliography{W-1-p.bbl}

\bigskip

\begin{flushleft}

Zhongwei Shen,
Department of Mathematics,
University of Kentucky,
Lexington, Kentucky 40506,
USA.
E-mail: zshen2@uky.edu
\end{flushleft}

\bigskip

\end{document}